\def \D {\,\mathrm{d}}
\def \E {e}
\def \I {\mathbb{I}}
\def \smartqed {}
\def \nn {\nonumber}
\def \N {\mathbb{N}}
\def \R {\mathbb{R}}
\def \ind{1\!\!1}
\def \prob        {\ensuremath{\mathbf{P}}}
\def \expect      {\ensuremath{\mathbf{E}}}
\def \Leb {\mathrm{Leb}}
\def \birth {\tau}                  
\def \birthtimes {\Sigma}           
\def \bound {M_\probe}              
\def \childmax {K}                  
\def \contr {\Lambda}               
\def \deg    {\textup{deg}}         
\def \eps {\varepsilon}
\def \event {\Omega}                
\def \expprobe {m_\probe}           
\def \fracrelat {R}           
\def \glob {\Theta}		 
\def \globdensity {\pi}         
\def \ggpdensity {\rho}       
\def \isancestor {\prec}    
\def \isancestoreq {\preceq}    
\def \kernel {P}                    
\def \leaves {\partial\vertices}    
\def \malt   {\lambda ^*}        
\def \markovlimitmeasure {\nu}      
\def \measure {\mu}                 
\def \metric {d}                    
\def \probe {\phi}                  
\def \relat {\Delta}              
\def \tmpmeasure {\tilde{\mu}}     
\def \tree   {\Upsilon}             
\def \trees {\mathcal{G}}           
\def \ver    {\zeta}                  
\def \vertexprob {p}                
\def \vertexweight {T}              
\def \vertices {\mathcal{N}}	    
\def \wait {\sigma}                 
\def \waitglob {V}                  
\def \weight {w}                    
\def \Ykernel {\tilde\kernel}       
\newcommand{\abs}[1]{\left|{#1}\right|}
\newcommand{\parent}[1]{p({#1})}
\newcommand{\subtree}[2] {{#1}_{\downarrow {#2}}}
\newcommand{\cylinder}[2] {{#1}({#2})}
\newcommand{\crop}[2] {{#1}_{| {#2}}}
\newtheorem{theorem}{Theorem}[section]
\newtheorem{definition}[theorem]{Definition}
\newtheorem{remark}[theorem]{Remark}
\newtheorem{lemma}[theorem]{Lemma}
\newtheorem{proposition}[theorem]{Proposition}
\newtheorem{corollary}[theorem]{Corollary}
\begin{document}

\title{Entropy and Hausdorff Dimension in Random Growing Trees}
\author{Anna Rudas\\ Institute of Mathematics, Technical University of Budapest \\
\textit{rudasa@math.bme.hu} \\
and \\
Imre P\'eter T\'oth\\ MTA-BME Stochastics Research Group\\
and Department of Mathematics and Statistics, University of Helsinki \\ \textit{mogy@math.bme.hu} }

\maketitle


\abstract{
We investigate the limiting behaviour of random tree growth in preferential attachment models. The tree stems from a root, and we add vertices to the system one-by-one at random, according to a rule which depends on the degree distribution of the already existing tree. The so-called \textit{weight function}, in terms of which the rule of attachment is formulated, is such that each vertex in the tree can have at most $\childmax$ children.

We define the concept of a certain random measure $\measure$ on the leaves of the limiting tree, which captures a global property of the tree growth in a natural way. We prove that the Hausdorff and the packing dimension of this limiting measure is equal and constant with probability one. Moreover, the local dimension of $\measure$ equals the Hausdorff dimension at $\measure$-almost every point. We give an explicit formula for the dimension, given the rule of attachment. 
}


\section{Introduction}
\label{sec:Introduction}

We investigate a family of tree growth models in which the tree stems from a root in the beginning, and vertices are added one at a time, the new vertex always attaching to exactly one already existing vertex. The rule by which the new vertex chooses its parent, is
dependent on the degree distribution apparent in the tree at the time the vertex is born. The models can be either in discrete time, when a vertex is born in every second, or in continuous time, then birth times are random. For the problems we discuss, these two versions are equivalent and can be translated into each other (details in Section~\ref{sec:TheModel}). The classical models and results of the area use the discrete setting. However, for the proofs we give, the continuous-time version is much more natural and convenient, so this is what we will use.

This big family of models includes the Barab\'asi-Albert graph \cite{barabasialbert} for example, in which the linear preferential attachment rule reproduces certain phenomena observed in real-world networks (e.g.\,the power law decay of the degree sequence). This property of the Barab\'asi-Albert graph was proved in a mathematically precise way in \cite{bolriospetus} and, independently, in \cite{mori1}. A wider class of models is considered in \cite{krapivskyPRL, krapivskyPRE}, for rigorous results on different cases of this model, see \cite{oliveira, rtv}.

The results mentioned above focus on the local behaviour of the random tree, namely, they give results concerning the neighbourhood of a uniformly random vertex, which is chosen from the tree after a long time of tree evolution. In this paper we concentrate on global properties of the limiting tree.

It is natural to pose the following question. Let us fix a vertex, say the first vertex in the first generation, just above the root. What is the ``limiting success level" of this vertex, compared to the other vertices in the same generation? What we mean by this is the number of descendants of this vertex, after a long time of tree evolution, compared to the number of descendants of its brothers.

Another formulation of the same question is to fix a vertex, let the tree grow for a long time, then choose a vertex uniformly at random from the big tree, and ask the probability that this random vertex is descendant of the fixed vertex. Clearly, if we look at these limiting probabilities for let us say the first generation, we get a distribution, itself being random, that codes an important information of the evolution of the tree.

If one looks at the system of these limiting (as time evolution of the tree tends to infinity) random distributions on the different generations of the tree, it is tempting to ask something about the limiting measure of this system, when letting the generation level tend to infinity. We will define the above concepts properly, and will denote this overall limiting measure by $\measure$.

Having a random measure in our hand, which describes a global property of the limiting infinite system, it is natural to ask about the Hausdorff (and packing) dimension of this measure, for several reasons. First, these are the primary quantities capturing the scaling behaviour of the system, so they appear in statistical and Statistical Physics discussions. Secondly, these can actually be measured in (finite, but big) real systems, so they can be used to check the validity of models, or to tune parameters.

On the other hand, the dimension of the measure depends on a parameter of the underlying metric, which is arbitrary. To rule out this (trivial) dependence, it is usual to ask about the entropy of the limiting measure, which depends on the growth process only. This is the natural equivalent of the dimension from a dynamical point of view.

We prove the following results. 
\begin{enumerate}
\item The limiting entropies (as time tends to infinity) of the random measures on the different generations converge to a constant with probability one, as we let the generation level to infinity. This constant $h$ is called the entropy of the limiting measure $\measure$.

\item The Hausdorff and the packing dimension of the random limiting measure $\measure$ are 
constant and equal with probability one. The entropy and the dimension satisfy the usual simple relation $dimension=\frac{entropy}{Ljapunov\, exponent}$, see (\ref{eq:dimensionAndEntropy}).
Moreover, the local dimension of $\measure$ equals the Hausdorff dimension at
$\measure$-almost every point.

\item Given the so-called \textit{weight function} $\weight$, which determines the rule of the tree growth, we provide an explicit formula for the entropy, and thus for the Hausdorff dimension, in terms of $\weight$.

\end{enumerate}

The key to these results is a Markov process appearing naturally in the construction of a $\measure$-typical leaf of the tree. After some discussion of the tree structure, the Markov property will be easy to see. Some technical difficulties will arise from the non-compactness of the state space.

Our model is special in the sense that we only allow a finite degree for each vertex, but it is general in the sense that after having fixed the maximum number of children $\childmax$ a vertex may have, the weight function $\weight$, which determines the rule of attachment, can be any positive-valued function on $\{0,1,\dots,\childmax-1\}$.

The paper is structured as follows: The model and the results are presented in Section~\ref{sec:NotationAndDefinitions}. This also includes a brief discussion of related models and related results in Section~\ref{sec:related_models}. Section~\ref{sec:mainline} contains the main line of the argument, and ends with the proof of the first two results. Section~\ref{sec:proofs} is devoted to proofs of lemmas which have been used but not proven in Section~\ref{sec:mainline}. Finally, Section~\ref{sec:ComputationOfEntropy} contains the proof of the last result.


\section{Notation, Definitions and Results}
\label{sec:NotationAndDefinitions}

We consider rooted ordered trees, which are also called family
trees or rooted planar trees in the literature.

In order to refer to these trees it is convenient to use genealogical
phrasing. The tree is thus regarded as the coding of the evolution of
a population stemming from one individual (the root of the tree),
whose ``children'' form the ``first generation'' (these are the
vertices connected directly to the root). 
In general, the edges of the
tree represent parent-child relations, the parent always being the one
closer to the root. The birth order between brothers is also taken
into account, this is represented by the tree being an ordered tree
(planar tree). 

We only consider the case when every vertex can have at most $\childmax \in\N$ children. We assume $K\ge 2$ to avoid the trivial case when only one child is born per parent. (In that case the tree growth is linear and the tree has no interesting structure.)
We use the index set $\I:=\{1,2,\dots ,\childmax\}$, and also use 
$\I^-:=\{0,1,\dots ,\childmax-1\}$.

The vertices are labelled by the set
\begin{equation*}
\vertices = \bigcup _{n=0}^{\infty} \I ^{n},\quad 
\text{where}\quad
\I^{0}:=\{\emptyset\}\;,
\end{equation*}
as follows.
$\emptyset$ denotes the root of the tree, its first-born child is
labelled by 1, the second one by 2, etc., and its last one by $\childmax$,
all the vertices in the first
generation are thus labelled with the elements of $\I$. Similarly, in
general, the children of $x=(i_1,i_2,\dots,i_n)$ are labelled by
$(i_1,i_2,\dots,i_n,1)$, $(i_1,i_2,\dots,i_n,2)$, etc. Thus, if a
vertex has label $x=(i_1,i_2,\dots,i_n)\in \vertices$, then it is the
$i_n^{\text{th}}$ child of its parent, which is the
$i_{n-1}^{\text{th}}$ child of its own parent and so on.  If
$x=(i_1,i_2,\dots,i_n)$ and $y=(j_1,j_2,\dots,j_l)$ then we will use
the shorthand notation $xy$ for the concatenation
$(i_1,i_2,\dots,i_n,j_1,j_2,\dots,j_l)$, and with a slight abuse of
notation for $i\in \I$, we use $xi$ for $(i_1,i_2,\dots,i_n,i)$.

There is a natural partial ordering $\isancestor$ on $\vertices$, namely, 
$x\isancestor z$ if $x$ is ancestor of $z$, so if $\exists y\in
\vertices$, $y \ne \emptyset$ such that $z=xy$. We use $x\isancestoreq z$
meaning $x\isancestor z$ or $x=z$.

We can identify a rooted ordered tree with the set of labels of the
vertices, since this set already identifies the set of
edges in the tree. It is clear that a subset $G \subset \vertices$ may
represent a rooted ordered tree iff $\emptyset \in G$, and for each
$(i_1,i_2,\dots,i_n)\in G$ we have $(i_1,i_2,\dots,i_n-1)\in G$ if
$i_n>1$, and $(i_1,i_2,\dots,i_{n-1})\in G$ if $i_n=1$.

We also think of $\vertices$ as the complete rooted ordered tree.

$\trees$ will denote the set of all finite, rooted ordered
trees. The \textit{degree} of vertex $x\in G$ will
denote the number of its children in $G$:
\begin{equation*}
\deg (x,G):=\text{max}\{i\in \I:\, xi\in G\} \quad \text{(zero if $x1\notin G$)}
\end{equation*}

The \emph{subtree} rooted at a vertex $x\in G$ is:
\begin{equation*}
\subtree{G}{x}:= \{y: xy\in G\}\;,
\end{equation*}
this is just the progeny of $x$ viewed as a rooted ordered tree.


\subsection{The Model}
\label{sec:TheModel}

\subsubsection{Continuous-time Model}
Given a function $\weight: \I^- \to \R_+$, referred to as the weight function,
our randomly growing tree $\tree(t)$
is a continuous-time, time-homogeneous Markov chain on
the countable state space $\trees$, with initial state
$\tree(0)=\{\emptyset\}$ and right-continuous trajectories.

The jump rates are the following. Suppose that at some $t\ge 0$ we have
$\tree(t-)=G$, then for each $x\in G$ which has $\deg(x,G)=j<\childmax$,
the process may jump to $G\cup \{xi\}$ with rate
$\weight(\deg(x,G))$ where $i=j+1$. This means
that each existing vertex $x\in \tree(t-)$ `gives birth to a child'
with rate $\weight (\deg (x, \tree (t-)))$, independently of the
others, and stops reproducing when reaches $\deg(x, \tree (t))=\childmax$. 

The Markov chain $\tree(t)$ is well defined for $t\in[0,\infty)$, it does not 
blow up in finite time (see comment at (\ref{eq:globexpectation})).

We define the \emph{total weight} of a tree $G\in\trees$ as
\begin{equation*}
W(G):=\sum_{x\in G} \weight(\deg(x,G))\;.
\end{equation*}
Described in other words, the Markov chain $\tree(t)$ evolves as
follows: assuming $\tree(t-)=G$, at time $t$ a new vertex is added to
it with total rate $W(G)$, and it is attached with an edge to exactly
one already existing vertex, which is $x\in G$ with probability
\begin{equation*}
\frac{\weight(\deg (x,G))} {\sum_{y\in G}\weight(\deg(y,G))}\;.
\end{equation*}
\subsubsection{Discrete-time Model}
This continuous-time model naturally contains another, discrete-time
model as follows. Define the stopping times
\begin{equation*}
S_n:=\inf \{t: \abs{\tree (t)} =n+1\},
\end{equation*}
then the Markov chain $\tree (S_n)$ is a randomly growing tree, where exactly one vertex is born at each time unit, and every newly born vertex chooses its parent at random, choosing $x$ with probability
\begin{equation*}
\frac{\weight(\deg (x,G))} {\sum_{y\in G}\weight(\deg(y,G))}\;.
\end{equation*}
if the $\tree (S_{n-1})=G$.

It was in this framework that Barab\'asi and Albert originally formulated their model \cite{barabasialbert}. The relation of the two models is discussed in detail in \cite{rtv}. As mentioned before, the questions we pose can be formulated equivalently in both models, but we will use the continuous-time version in our proofs, for reasons of convenience.


\subsection{Some Additional Notation and Known Results}
\label{sec:AdditionalNotation}

Let $\birth _x$ be the birth time of vertex $x$,
\begin{equation}\label{eq:birthdef}
\birth _x:=\inf\{t>0\;:\,x\in\tree(t)\}\;.
\end{equation}
Let $\wait _x$ be the time we have to wait for the appearance of
vertex $x$, starting from the moment that its birth is actually
possible (e.g.~when no other vertex is obliged to be born before him).
Namely, let
\begin{enumerate}[(a)]
\item $\wait _{\emptyset}:=0$, 
\item $\wait _{y1}:=\birth _{y1}-\birth _y$, for any $y\in\vertices$,  
\item and $\wait _{yi}:=\birth _{yi}-\birth_{y(i-1)}$, for each 
$y\in\vertices$ and $i\ge 2$, $i\in \I$.
\end{enumerate}

Let the function $\widehat\varrho:(0,\infty)\to(0,\infty)$ be defined as
\begin{equation}
\label{eq:laprho}
\widehat\varrho(\lambda):=
\expect\sum_{j=1}^\childmax e^{-\lambda \birth_j}
=\sum_{j=1}^{\childmax} \prod_{i=0}^{j-1}
\frac{\weight(i)}{\lambda+\weight(i)}\;.
\end{equation}
The function $\widehat\varrho$ plays a central role in the theory of the branching processes related to our model, as discussed in \cite{rtv}.\footnote{The reason for the notation $\widehat\varrho$ is that this function is the Laplace transform of the density of the point process formed by birth times in the first generation of the tree.}
However, in the present work we use little of that relation -- instead, we list here the known results that we will use.
\begin{enumerate}

\item The equation
 \begin{equation*}
 \widehat\varrho(\lambda)=1
 \end{equation*}
 has a unique root $\malt>0$. This $\malt$ is called the Malthusian parameter.

\item This $\malt$ gives the rate of exponential growth of the tree size almost surely. The normalized size of the tree converges almost surely to a random variable, which we denote by 
\begin{equation*}
\glob := \lim_{t\to\infty}\E ^{-\malt t}|\tree(t)|\;.
\end{equation*}

\item $\glob$ is almost surely positive, and 
\begin{equation} \label{eq:globexpectation}
0 < \expect \glob < \infty, 
\end{equation}
which implies (also) that almost surely the process $\tree (t)$ does not blow up in finite time.

\item Moreover,
\begin{equation}\label{eq:var_glob_finite}
\expect \glob^2<\infty.
\end{equation}
\end{enumerate}

The first statement is in our setting obvious from the definition, since we have assumed $2\le\childmax<\infty$. The second and third are shown in \cite{rtv}. The last statement is also implicit from \cite{rtv} -- the variance is even calculated. Alternatively, the finiteness of the variance follows from Theorem 6.8.1 in \cite{jagers}, which states $L^2$ convergence of the normalized size under the condition $\expect[(\sum_{i=1}^\childmax e^{-\lambda \birth_i})^2]<\infty$, which is again obvious, since $\childmax<\infty$.
\begin{remark}
\label{def:alternativeConstruction}
The process $\tree (t)$ has an alternative construction, which we state 
here and refer to later. Define a countably infinite number of independent
random variables $\tilde{\wait}_x$, indexed with the elements of $\vertices$,
as follows.
Let $\tilde{\wait}_{\emptyset}=0$, and for 
$x=i_1i_2\hdots i_n$, let $\tilde{\wait}_x$ be exponentially distributed
with parameter $\weight(i_n-1)$. Denoting the parent of $x$ by 
$\parent{x}$, we define $\tilde{\birth}_{\emptyset}=0$ and 
\begin{equation*}
\tilde{\birth}_x=\tilde{\birth}_{\parent{x}}+\tilde{\wait}_{\parent{x}1}
+\tilde{\wait}_{\parent{x}2}+\hdots +\tilde{\wait}_{\parent{x}i_n}.
\end{equation*}
It is straightforward that with $\tilde{\tree}(t):=\{x\in \vertices:
\tilde{\birth}_x\le t\}$, the process $\tilde{\tree}$ has the same distribution as
$\tree$.
\end{remark}
%
\subsection{Limiting Objects}
\label{sec:LimitingObjects}
Let $\subtree{\tree}{x}(t)=\subtree{(\tree(t))}{x}$ denote the subtree of $\tree(t)$ rooted at $x$, which is the set of descendants of $x$ (including $x$) that are born up to time $t$. (Note that $t$ here is total time, and not the time since birth of $x$. In particular, $|\subtree{\tree}{x}(0)|=0$ if $x$ is not the root.)
For every $x\in \vertices$, we introduce the variables $\glob_x$,
corresponding to the growth of the subtree under $x$, analogously to $\glob$, 
\begin{equation*}
\glob _x := \lim _{t \to \infty} \E ^{-\malt (t-\birth _x)}
|\subtree{\tree}{x}(t)|\;.
\end{equation*}
The letter $\glob$ refers to the variable corresponding to
the root.
Clearly, for every $x\in\vertices$, the random variables $\glob _x$ are
identically distributed.
The basic relation between the different $\glob _x$
variables in the tree is that for any $x\in\vertices$,
\begin{equation}
\label{eq:decomposetheta}
\glob _x=\sum _{i=1}^{\childmax} 
\E ^{-\malt (\birth _{xi}-\birth _x)} \glob _{xi}\;,
\end{equation}
which is straightforward from 
$|\subtree{\tree}{x}(t)|=1+\sum_{i=1}^{\childmax}|\subtree{\tree}{xi}(t)|.$

Now let us ask the following question. Fix a vertex $x\in\vertices$, and at
time $t$, draw a vertex $\ver _t$ uniformly randomly from $\tree (t)$. 
What is the probability that $\ver _t$ is descendant of $x$, so 
$x \isancestor \ver _t$? As shown in (\ref{eq:delta-def}) below, this probability 
tends to an almost sure limit $\relat _x$ as $t\to\infty$, 
which can be expressed using the $\birth$ and $\glob$ random 
variables,
\begin{equation}
\label{eq:delta-def}
\relat _x := 
\lim _{t\to \infty} \frac{|\subtree{\tree}{x}(t)|}{|\tree (t)|} =
\E ^{-\malt \birth _x}\lim _{t\to \infty} 
\frac{ \E ^{-\malt (t-\birth _x)}
  |\subtree{\tree}{x}(t)|}
{\E ^{-\malt t}|\tree (t)|}=
\frac{\E ^{-\malt \birth _x} \glob _x}{\glob _{\emptyset}}\;.
\end{equation}
We can now, for any $n\in \N$, define a random measure $\measure_n$ on the 
finite set $\{x\;:\,|x|=n\}$ 
(on the $n^{\text{th}}$ generation of the tree),
by 
\begin{equation*}
\measure_n(\{x\}):=\relat _x\;. 
\end{equation*}
This is a probability measure
almost surely, which follows from the facts $\relat _{\emptyset}=1$ 
and $\relat _y=\sum_{i=1}^{\childmax}\relat _{yi}$. 

Let $H_n$ denote the entropy of $\measure_n$, that is
\begin{equation*}
H_n=-\sum_{|x|=n}\relat _x \log \relat _x\;.
\end{equation*}


\subsubsection{A Measure as the Limiting Object for the Tree}
\label{sec:measureAsLimiting}

Let $\leaves$ denote the set of leaves of the complete tree:
$\leaves=\{1,2,\dots ,\childmax\}^\infty$.
The concatenation $xy$ makes sense for $x\in \vertices$ and $y\in\leaves$,
and then $xy\in\leaves$. Also, for $x\in \vertices$ and $z\in\leaves$,
we write $x\isancestor z$ if $\exists y\in\leaves$ such that $z=xy$.
For $x\in\vertices$ we denote the set of leaves under $x$ by
$\cylinder{\leaves}{x}=\{z\in\leaves\;:\,x\isancestor z\}$.

Let $\leaves$ be equipped with the usual metric
\begin{equation}\label{eq:metric}
\metric (x,y)=\contr^{\max\{n\in \N\;:\,\crop{x}{n}=\crop{y}{n}\}}\;,
\end{equation}
where $0<\contr<1$ is an arbitrary constant. This constant is often chosen to be $1/e$,
which makes certain formulae appear simpler. Yet we will not fix the value, so that our
formulae express the dependence of the studied quantities on this arbitrary choice.

With the help of the $\measure _n$ random limiting measures, we define
$\measure$ on the cylinder sets $\cylinder{\leaves}{x}$ of $\leaves$ by
\begin{equation*}
\measure(\cylinder{\leaves}{x}):=\measure _n(\{x\})=\relat _x\;,\text{ if }\abs{x}=n\;,
\end{equation*}
and then we extend $\measure$ from $\{\cylinder{\leaves}{x}\,:\,x\in\vertices\}$ to the
sigma-algebra generated (on $\leaves$).
Our results concern the properties of this extended random measure $\measure$.

\begin{remark}
Now we can tell why we use the continuous and not the discrete-time model in our work. The limiting relative weights $\relat_x$ defined in (\ref{eq:delta-def}) also make sense and are interesting in the discrete-time setting, just like the measure $\measure$ and the entropy $H_n$. Our results are formulated in terms of these quantities. However, the limiting ``absolute'' weights $\glob_x$, which will play a central role in the proofs, don't make sense in the discrete-time setting.
\end{remark}


\subsubsection{Dimensions of Measures: Definitions}

For the reader's convenience, let us review the definitions of local dimension, Hausdorff dimension and packing dimension of measures.
The lower and upper local dimensions of $\measure$ at $x$ are defined in \cite{Falconer97} (2.15) and (2.16) as
\begin{eqnarray}
 \underline{\dim}_\mathrm{loc} \measure(x) &=& \liminf_{r\to 0} \frac{\log\measure(B(x,r))}{\log r},\label{eq:lowerloc}\\
 \overline{\dim}_\mathrm{loc} \measure(x) &=& \limsup_{r\to 0} \frac{\log\measure(B(x,r))}{\log r},\label{eq:upperloc}
\end{eqnarray}
where $B(x,r)$ is the ball of radius $r$ centred at $x$. If the lower and upper local dimensions coincide at some $x$, they are called the local dimension at $x$.
The Hausdorff and packing dimensions of $\measure$ are defined  in \cite{Falconer97} (10.8) and (10.9) as
\begin{eqnarray}
 \dim_\mathrm{H} \measure &=& \sup\{s: \underline{\dim}_\mathrm{loc} \measure(x) \ge s \text{ for $\measure$-almost all $x$}\} ,\label{eq:dimH}\\
 \dim_\mathrm{P} \measure &=& \sup\{s: \overline{\dim}_\mathrm{loc} \measure(x) \ge s \text{ for $\measure$-almost all $x$}\}.\label{eq:dimP}
\end{eqnarray}
The name of these dimensions come from the fact (\cite{Falconer97} (10.10) and (10.11)) that
\begin{eqnarray}
 \dim_\mathrm{H} \measure &=& \inf\{\dim_\mathrm{H} E: \text{ $E$ is a Borel set with $\measure(E)>0$}\} ,\nn \\
 \dim_\mathrm{P} \measure &=& \inf\{\dim_\mathrm{P} E: \text{ $E$ is a Borel set with $\measure(E)>0$}\}.\nn
\end{eqnarray}
We are ready to state our results.

\subsection{Results}

\begin{theorem}
\label{thm:entropy}
The limiting entropy 
\begin{equation*}
h:=\lim_{n\to\infty} \frac{1}{n}H_n
\end{equation*}
exists and is constant with probability one. 
\end{theorem}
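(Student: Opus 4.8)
The plan is to exhibit the sequence $H_n$ as (essentially) an additive functional along a branching process and to invoke a branching-process ergodic theorem, with the Markov chain appearing in the construction of a $\measure$-typical leaf supplying the relevant ergodicity. Concretely, using $\relat_y=\sum_{i=1}^{\childmax}\relat_{yi}$, write for $|y|=n$ the conditional weights $\relat_{yi}/\relat_y$, $i\in\I$, which form a random probability vector on $\I$. A direct computation (the chain rule for entropy) gives
\begin{equation}
\label{eq:Hchainrule}
H_{n+1}=H_n+\sum_{|y|=n}\relat_y\, h\!\left(\frac{\relat_{y1}}{\relat_y},\dots,\frac{\relat_{y\childmax}}{\relat_y}\right),
\end{equation}
where $h(p_1,\dots,p_\childmax)=-\sum_i p_i\log p_i$ is the entropy of a single split. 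Thus $H_n=\sum_{k=0}^{n-1}\sum_{|y|=k}\relat_y\,\Phi(y)$ with $\Phi(y)$ the conditional split-entropy at $y$. The measure $\measure$ being a probability measure on $\leaves$, the inner sum $\sum_{|y|=k}\relat_y\Phi(y)$ is exactly the expectation of $\Phi$ along the first $k$ steps of a $\measure$-random leaf; so $\frac1n H_n$ is a Cesàro average of $\expect_{\measure}[\Phi(\crop{Z}{k})]$ for $Z\sim\measure$.

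The key structural input, promised in the introduction, is that the process describing the local neighbourhood of a $\measure$-typical leaf — reading off, generation by generation, the index chosen together with whatever auxiliary data (degrees, rescaled birth times, the $\glob$-type variables of the relevant subtrees) is needed to make it Markov — is a Markov chain, call it $(Y_k)_{k\ge0}$, with $\Phi(\crop{Z}{k})$ a deterministic function of $Y_k$. I would first identify the correct state (this is where the decomposition (\ref{eq:decomposetheta}) and the relation $\relat_{xi}/\relat_x = \E^{-\malt(\birth_{xi}-\birth_x)}\glob_{xi}/\glob_x$ are used to pin down exactly which quantities propagate), then establish the Markov property, and then produce a stationary (or attracting) distribution $\markovlimitmeasure$ for it. Given existence of $\markovlimitmeasure$ and enough ergodicity, the Cesàro averages of $\expect[\Phi(Y_k)]$ converge to $h:=\int \Phi\,d\markovlimitmeasure$, and this limit is deterministic; the almost-sure statement then follows by a size-biasing / many-to-one argument converting the annealed convergence into an a.s. statement along the actual random tree (a martingale or an $L^1$/$L^2$ argument using $\expect\glob^2<\infty$ from (\ref{eq:var_glob_finite})).

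The main obstacle is the one flagged in the excerpt: the state space of $(Y_k)$ is non-compact (the $\glob_x$ and the rescaled birth times are unbounded), so standard compact-state Markov ergodic theorems do not apply off the shelf. I expect the real work to be (i) showing $\Phi$ is integrable against the stationary law — which needs tail control on $\glob$ (hence the role of $\expect\glob^2<\infty$) and on $\log$ of the split probabilities — and (ii) proving convergence of the Cesàro averages despite non-compactness, presumably via a Lyapunov/drift condition or a coupling argument giving geometric (or at least Cesàro) ergodicity of $(Y_k)$. A secondary technical point is justifying the a.s.\ convergence of the random $\frac1n H_n$ (not just of its expectation): here one controls the fluctuation of $\sum_{|y|=k}\relat_y\Phi(y)$ around its mean using the independence built into the alternative construction in Remark~\ref{def:alternativeConstruction} together with the second-moment bound, and concludes by Borel–Cantelli or a subadditivity-type argument.
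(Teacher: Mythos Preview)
Your core structural idea --- a Markov chain along a $\measure$-typical leaf, then an ergodic theorem --- is exactly what the paper does. The paper's chain is $Y_n=(\glob_{y_n},\fracrelat_{y_n})$, and the ergodic observable is $-\log\fracrelat_{y_n}$, whose time average along the path is $-\frac{1}{n}\log\relat_{y_n}$ by the telescoping identity~(\ref{eq:decomposelogrelat}). Note that your proposed observable $\Phi(y)$, the full split entropy at $y$, is \emph{not} a function of this state: it requires all siblings' $\fracrelat_{yi}$, not just the chosen one's. The paper avoids enlarging the state by working with $-\log\fracrelat_{y_n}$ directly.

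Where your proposal has a genuine gap is the passage from the ergodic statement to almost-sure convergence of the random quantity $\frac{1}{n}H_n$. You suggest controlling the fluctuations of $a_k=\sum_{|y|=k}\relat_y\Phi(y)$ around its mean via second moments of $\glob$ and Borel--Cantelli; this is left entirely unspecified, and the paper does not go that way at all. Instead, the paper observes (Corollary~\ref{cor:entropy_with_Markov}) that $\frac{1}{n}H_n=\expect\!\big(-\frac{1}{n}\log\relat_{y_n}\,\big|\,\tree\big)=\int_{\leaves}f_n\,d\measure$ with $f_n(x)=-\frac{1}{n}\log\measure(\leaves(x|_n))$. The ergodic theorem (Corollary~\ref{cor:h_is_constant}) gives $-\frac{1}{n}\log\relat_{y_n}\to h$ almost surely in the \emph{joint} randomness of tree and leaf; since the conditional law of $y_\infty$ given $\tree$ is $\measure$, this says $f_n\to h$ $\measure$-a.e.\ for a.e.\ tree. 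Then $\frac{1}{n}H_n\to h$ follows by dominated convergence, with dominating function $\sup_n f_n$; the integrability of this supremum (Lemma~\ref{lem:domkonv}) is a completely elementary counting fact, valid for \emph{any} Borel probability on $\leaves$ when $\childmax<\infty$, and uses no moment information about $\glob$ whatsoever. The condition $\expect\glob^2<\infty$ plays no role in the proof of Theorem~\ref{thm:entropy}; it enters only later, for the explicit formula in Theorem~\ref{thm:explicit}. So your anticipated hard work on integrability and fluctuation control is replaced by a one-line dominated-convergence step.
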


\begin{theorem}
\label{thm:dimension}
The Hausdorff dimension $\dim_H \measure$ and the packing dimension $\dim_P \measure$ of the measure $\measure$ are 
constant and equal with probability one, and $h$ and the dimensions satisfy the relation
\begin{equation}
\label{eq:dimensionAndEntropy}
\dim_H \measure =\dim_P \measure = \frac{h}{-\log \contr},
\end{equation}
where $\contr$ is from (\ref{eq:metric}). Moreover, the local dimension of $\measure$ equals $\dim_H \measure =\dim_P \measure$ at
$\measure$-almost every point.
\end{theorem}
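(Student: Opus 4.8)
The plan is to exhibit a $\measure$-typical leaf $z$ as the trajectory of a Markov chain, and then use the ergodic theorem for this chain to show that $\frac{\log\measure(B(z,r))}{\log r}$ converges to the single constant $h/(-\log\contr)$ for $\measure$-almost every $z$. First I would observe that, since $B(z,\contr^n)=\cylinder{\leaves}{\crop{z}{n}}$ for the metric (\ref{eq:metric}), the local dimension of $\measure$ at $z$ is governed by the behaviour of $\log\measure(\cylinder{\leaves}{\crop{z}{n}})=\log\relat_{\crop{z}{n}}$ as $n\to\infty$: the local dimension exists and equals $s$ at $z$ iff $-\frac{1}{n}\log\relat_{\crop{z}{n}}\to s\,(-\log\contr)$. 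So everything reduces to proving that $-\frac{1}{n}\log\relat_{\crop{z}{n}}$ converges to the constant $h$ for $\measure$-a.e.\ $z$.

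Next I would use the multiplicative structure of the $\relat_x$. Telescoping (\ref{eq:delta-def}) along $z=\crop{z}{n}$ gives $\relat_{\crop{z}{n}}=\prod_{k=1}^{n}\frac{\relat_{\crop{z}{k}}}{\relat_{\crop{z}{k-1}}}$, so $-\frac{1}{n}\log\relat_{\crop{z}{n}}=\frac{1}{n}\sum_{k=1}^{n}\bigl(-\log\frac{\relat_{\crop{z}{k}}}{\relat_{\crop{z}{k-1}}}\bigr)$, a Birkhoff-type average of an additive functional of the path $z$. The law of $z$ under the annealed measure (pick the tree, then pick $z$ according to $\measure$) is, by the decomposition (\ref{eq:decomposetheta}) rewritten in terms of the $\relat$'s — i.e.\ $\relat_{yi}/\relat_y = \E^{-\malt(\birth_{yi}-\birth_y)}\glob_{yi}/\glob_y$ — exactly a Markov chain: given that the current vertex is $x$, the distribution of the offspring ratios $(\relat_{xi}/\relat_x)_{i\in\I}$ together with the choice of which child to follow (child $i$ with probability $\relat_{xi}/\relat_x$) depends only on a state that encodes $\glob_x$ (or rather the joint law of the birth-delays $\wait_{xi}$ and the $\glob_{xi}$, which by Remark~\ref{def:alternativeConstruction} is i.i.d.\ across vertices). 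I would make this precise, identify the stationary measure of the chain (the size-biasing by $\relat$ is what makes the tilted root variable the stationary state), and check that the increment $-\log(\relat_{\crop{z}{k}}/\relat_{\crop{z}{k-1}})$ is integrable against it — here finiteness of $\expect\glob$ and $\expect\glob^2$ from (\ref{eq:globexpectation})–(\ref{eq:var_glob_finite}) plus the explicit exponential form of $\wait_{xi}$ should give the integrability and rule out the increment being $\pm\infty$. Then the ergodic theorem yields $-\frac{1}{n}\log\relat_{\crop{z}{n}}\to h$ for $\measure$-a.e.\ $z$, with $h$ the (deterministic) stationary expectation of the increment; that this $h$ agrees with the entropy $h=\lim\frac1nH_n$ of Theorem~\ref{thm:entropy} follows because $H_n=\expect_\measure[-\log\relat_{\crop{z}{n}}]$ and dominated/monotone convergence transfers the pointwise limit to the mean.

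Having the pointwise local dimension $=h/(-\log\contr)$ at $\measure$-a.e.\ $z$, the Hausdorff and packing dimension statements are then standard (Billingsley's lemma / the Frostman-type criterion quoted from \cite{Falconer97} (10.8)–(10.11)): if $\underline{\dim}_\mathrm{loc}\measure(z)=s$ for $\measure$-a.e.\ $z$ then $\dim_\mathrm{H}\measure=s$, and likewise $\overline{\dim}_\mathrm{loc}\measure(z)=s$ a.e.\ gives $\dim_\mathrm{P}\measure=s$; since the local dimension exists and equals $s=h/(-\log\contr)$ both are $s$, and they are equal and constant. Finally, because $h$ is a deterministic constant, so are the dimensions, with probability one.

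The main obstacle I expect is the non-compactness of the state space of the Markov chain — the state carries the unbounded random variable $\glob_x$ — so the off-the-shelf ergodic theorem for finite-state or compact chains does not apply directly. Establishing a genuine stationary (and ergodic) regime and the requisite integrability/uniform-integrability to run the ergodic theorem and to pass from the a.e.\ limit to the $H_n$-limit is the technical heart of the argument; the moment bounds (\ref{eq:globexpectation}) and (\ref{eq:var_glob_finite}) are presumably what is used to control the tails of $\glob_x$ and legitimize these steps.
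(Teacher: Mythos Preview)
Your proposal is correct and follows essentially the same route as the paper: construct the $\measure$-typical leaf via a size-biased walk down the tree, observe that $\glob_{y_n}$ (extended to $(\glob_{y_n},\fracrelat_{y_n})$ so that the increment becomes an observable) is a homogeneous Markov chain, and read off the local dimension as the ergodic average of $-\log\fracrelat_{y_n}$; the identification with $h$ and the Hausdorff/packing conclusions via \cite{Falconer97} are exactly as in the paper, and you have correctly flagged the non-compact state space (existence/uniqueness of the invariant measure) as the technical crux. The only minor deviation is that the paper sidesteps verifying integrability of the increment by invoking Doob's theorem on invariant random variables for the stationary version of the chain, rather than Birkhoff with an $L^1$ hypothesis, but this is a detail of execution rather than a different strategy.
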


\begin{theorem}
\label{thm:explicit}
Furthermore, an explicit formula for $h$ is given:
\begin{equation*}
h = \expect \left(\sum_{i=1}^{\childmax} \malt \birth _i
e^{-\malt \birth _i} \right).
\end{equation*}
This can be computed given the weight function $\weight$. 
\end{theorem}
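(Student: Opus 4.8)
The plan is to exhibit $h$ as the mean, in a stationary regime, of the entropy of a single step of the spine of a $\measure$-typical leaf, and then to evaluate that mean by a size-biasing identity in which every term involving $\glob$ cancels, leaving only the birth-time terms.

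The first step is the reduction. Since $H_n$ is the entropy of a probability measure carried by at most $\childmax^{n}$ atoms, $0\le H_n\le n\log\childmax$, so the almost sure, constant limit of Theorem~\ref{thm:entropy} gives $h=\lim_n\tfrac1n\expect H_n$. Given $\tree$, let $Z\in\leaves$ be a leaf of law $\measure$ and $Z_k:=\crop Z k$, so that $\prob(Z_n=x\mid\tree)=\relat_x$ for $|x|=n$; concretely, $Z$ is built by moving, from $Z_{k-1}$, to its $i$-th child with conditional probability $\relat_{Z_{k-1}i}/\relat_{Z_{k-1}}$. Since $\relat_\emptyset=1$,
\[
H_n=\expect\!\big(-\log\relat_{Z_n}\mid\tree\big)=\sum_{k=1}^n\expect(D_k\mid\tree),\qquad D_k:=\log\frac{\relat_{Z_{k-1}}}{\relat_{Z_k}},
\]
so $h=\lim_n\tfrac1n\sum_{k=1}^n\expect D_k$. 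Conditioning on the subtree $\subtree\tree{Z_{k-1}}$ rooted at $Z_{k-1}$ (with birth times measured from $\birth_{Z_{k-1}}$), the next step picks child $i$ with probability $q_i=\relat_{Z_{k-1}i}/\relat_{Z_{k-1}}=\E^{-\malt(\birth_{Z_{k-1}i}-\birth_{Z_{k-1}})}\glob_{Z_{k-1}i}/\glob_{Z_{k-1}}$, these summing to $1$ by (\ref{eq:decomposetheta}); hence $\expect(D_k\mid\subtree\tree{Z_{k-1}})=-\sum_{i=1}^{\childmax}q_i\log q_i$, a functional of $\subtree\tree{Z_{k-1}}$ bounded by $\log\childmax$. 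The subtrees $(\subtree\tree{Z_{k-1}})_{k\ge1}$ form the Markov chain $\kernel$ underlying Section~\ref{sec:mainline}; borrowing from there its ergodicity — so that the Cesàro averages of the bounded functional above converge to its average under the stationary law $\markovlimitmeasure$ — one obtains
\[
h=\expect_{\markovlimitmeasure}\Big(-\sum_{i=1}^{\childmax}q_i\log q_i\Big),\qquad q_i=\frac{\E^{-\malt\birth_i}\glob_i}{\glob},\quad \glob=\sum_{j=1}^{\childmax}\E^{-\malt\birth_j}\glob_j .
\]

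Next, identify $\markovlimitmeasure$ and compute. I claim $\markovlimitmeasure$ is the $\glob$-size-biasing of the law of $\tree$, $\tfrac{\D\markovlimitmeasure}{\D\prob}=\glob/\expect\glob$ (a legitimate density since $0<\expect\glob<\infty$ by (\ref{eq:globexpectation})); this is checked by verifying $\markovlimitmeasure\kernel=\markovlimitmeasure$, using that in its own coordinates $\subtree\tree i$ is distributed as $\tree$, that $\glob_i$ is independent of $(\birth_1,\dots,\birth_{\childmax})$ with $\expect\glob_i=\expect\glob$, and that $\sum_i\expect\,\E^{-\malt\birth_i}=\lap(\malt)=1$:
\[
\expect_{\markovlimitmeasure}\Big(\sum_i q_i\Phi(\subtree\tree i)\Big)=\frac1{\expect\glob}\expect\Big(\sum_i\E^{-\malt\birth_i}\glob_i\Phi(\subtree\tree i)\Big)=\frac1{\expect\glob}\Big(\sum_i\expect\,\E^{-\malt\birth_i}\Big)\expect\big(\glob\,\Phi(\tree)\big)=\expect_{\markovlimitmeasure}\Phi .
\]
Now expand $-\log q_i=\malt\birth_i-\log\glob_i+\log\glob$, so $-\sum_iq_i\log q_i=\malt\sum_iq_i\birth_i-\big(\sum_iq_i\log\glob_i-\log\glob\big)$. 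Under $\expect_{\markovlimitmeasure}$ the parenthesis vanishes by the displayed identity taken at $\Phi=\log\glob$ — which is $\markovlimitmeasure$-integrable because $\expect_{\markovlimitmeasure}|\log\glob|=\tfrac1{\expect\glob}\expect(\glob|\log\glob|)<\infty$ by (\ref{eq:var_glob_finite}) — while the same size-biasing computation with $\birth_i$ in the role of $\Phi(\subtree\tree i)$ gives
\[
\expect_{\markovlimitmeasure}\Big(\sum_{i=1}^{\childmax}q_i\birth_i\Big)=\frac1{\expect\glob}\expect\Big(\sum_{i=1}^{\childmax}\E^{-\malt\birth_i}\glob_i\birth_i\Big)=\expect\Big(\sum_{i=1}^{\childmax}\birth_i\E^{-\malt\birth_i}\Big).
\]
Therefore $h=\malt\,\expect\big(\sum_{i=1}^{\childmax}\birth_i\E^{-\malt\birth_i}\big)=\expect\big(\sum_{i=1}^{\childmax}\malt\birth_i\E^{-\malt\birth_i}\big)$, which is the assertion; equivalently $h=-\malt\,\lap'(\malt)$, obtained by differentiating (\ref{eq:laprho}) at $\malt$, and this is an explicit expression in $\weight$.

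The hard part will not be the two steps just sketched — the identification of $\markovlimitmeasure$ and the cancellation of the $\log\glob$ terms are short — but the facts imported from Section~\ref{sec:mainline}: constructing the spine chain $\kernel$ on its non-compact state space and establishing enough ergodicity/mixing that the bounded child-selection entropy has a Cesàro limit equal to its $\markovlimitmeasure$-average, plus the integrability bookkeeping around $\glob$ (where (\ref{eq:var_glob_finite}) is the essential a priori bound). The point to stress is that the $\glob$-size-biasing is exactly the change of measure that simultaneously makes the environment chain stationary and annihilates every $\glob$-dependent contribution, leaving only the Malthusian transform $-\malt\,\lap'(\malt)$ of the first-generation birth-time point process.
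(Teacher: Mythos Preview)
Your argument is correct and reaches the same formula, but it takes a genuinely different route from the paper's own proof, and the comparison is instructive.

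The paper does not go through the stationary law of the spine chain at all. Instead it multiplies $H_n$ by $\glob$ and computes $\expect\big(\glob\sum_{|x|=n}\relat_x\log\relat_x\big)$ directly, using $\relat_x\glob=T_x\glob_x$ with $T_x=e^{-\malt\birth_x}$ and the independence of $\glob_x$ and $\birth_x$. A short calculation gives $\expect(\glob H_n)=-n(\expect\glob)\,\expect\big(\sum_i T_i\log T_i\big)$ \emph{exactly}, for every $n$; then dominated convergence (with the bound $\tfrac1n H_n\le\log\childmax$) and $\tfrac1n H_n\to h$ a.s.\ finish the job. In effect the paper performs your size-biasing ``by hand'' at finite $n$ and never needs the ergodic theorem or the identification of $\markovlimitmeasure$.

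Your route is more conceptual: you recognise the multiplication by $\glob$ as the size-biased change of measure, identify it as the stationary law of the subtree spine chain, and show the $\log\glob$ terms cancel under stationarity. What this buys is a clear explanation of \emph{why} only the Malthusian transform $-\malt\,\lap'(\malt)$ survives. What it costs is a dependence on the ergodic machinery: you need $\expect D_k\to\expect_{\markovlimitmeasure}F$, i.e.\ weak convergence of the law of the spine subtree, whereas Section~\ref{sec:mainline} literally proves this only for the $\glob$-marginal (Lemma~\ref{lem:markovlimit}) and for the kernel $\kernel$ on $\R^+$. The extension to the subtree chain is not hard---given $\glob_{y_n}=g$, the subtree has the fixed conditional law $\prob(\tree\in\cdot\mid\glob=g)$, and uniqueness of the invariant measure lifts from $\kernel$---but it is an extra step you should flag rather than silently borrow. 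Also note that the paper's $\kernel$ is the kernel on $\R^+$, not on trees; your chain is a lift of it.

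One integrability detail you handled correctly and the paper handles the same way: the use of $\expect\glob^2<\infty$ from (\ref{eq:var_glob_finite}) to ensure $\expect(\glob|\log\glob|)<\infty$, so that the cancellation $\expect_{\markovlimitmeasure}[\sum_i q_i\log\glob_i]=\expect_{\markovlimitmeasure}[\log\glob]$ is between finite quantities.
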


\subsection{Some Related Models and Results}
\label{sec:related_models}
In the last decades there has been much progress in describing the asymptotic structure of randomly
evolving trees, especially tree growth processes based on fragmentation processes. These processes are
closely related to our model, see Remark~\ref{rem:not_fragmentation}. Limiting objects called
``random real trees'' and ``continuum random trees'' were introduced, to which the evolving trees converge,
after an appropriate rescaling of the \emph{distances} on the tree. Much of the structure of these limiting
objects is understood, see e.g. \cite{HaasMiermont04,HaasMiermontPitmanWinkel08,HaasMiermont10}.

Our concept of the limiting measure $\measure$ is different from these. It is a measure on the set of leaves
of the infinite complete tree (with each vertex having exactly $K$ children), which is a metric space,
but the metric structure is trivial: it is not a result of any spatial scaling, and it carries no information about
the tree growth process. On the other hand, the weights given by $\measure$ are a result of an appropriate rescaling
of the tree size, where size means \emph{cardinality}. In short, we are really interested in the asymptotic weight
distribution, and not the asymptotic metric structure.
This asymptotic weight distribution is also studied in the Physics literature, see e.g. \cite{berestycki03},
where a quantity analogous to the local dimension is calculated for a continuous time fragmentation process.

Population growth models, studied excessively in the theory of branching processes (see e.g. \cite{jagers}), are
also intimately related to our model, as discussed in detail in \cite{rtv}.
Scientists discussed the Hausdorff dimension of the \emph{set} of individuals
that are actually (sooner or later) born. However, in our model this is uninteresting, because
-- almost surely -- every vertex is eventually born. Indeed, it is not the set, but the measure
which captures the long-term structure of the tree well, and of which the dimension is interesting.

Similarly, in the limiting continuous trees obtained in
\cite{HaasMiermont04,HaasMiermontPitmanWinkel08,HaasMiermont10} by a spatial rescaling of the evolving tree,
the metric structure is of main interest, and the Hausdorff dimension and Hausdorff measure of \emph{sets}
are the natural questions to ask \cite{DuquesneLeGall05,duquesne10} -- unlike in our setting.

The continuous time version of our tree growth process can also be translated into a branching random walk,
with time turning into displacement. Then the asymptotic growth can be described analogously,
see the Biggins theorem in \cite{Biggins77} or \cite{Lyons97}. 
However, with that point of view, the natural questions about the limiting structure are quite different.


\section{Main Line of the Proof}
\label{sec:mainline}

\subsection{Idea of the Proof}

The random limiting measure $\measure$ depends on the random growth of the tree. The idea of the proof is the following: we define a random leaf in the limiting tree according to the measure $\measure$. The way the random leaf is defined is based on a step-by-step construction of the subsequent generations of the limiting tree, together with a step-by-step construction of a path from the root to the random leaf. This is done in such a way that a Markov process appears naturally along this path, and the local dimension of the measure $\measure$ in this random point can be computed as an ergodic average. It follows that this average is constant with probability one, unconditionally. Thus, although the measure depends on the random tree growth, this ergodic average is constant, and it is the local dimension of the measure in all the $\measure$-typical leafs of the limiting tree. This implies that this constant is the Hausdorff (and also the packing) dimension of $\measure$ with probability one. Some technical difficulty comes from the fact that the state space of the key Markov process is continuous and non-compact, so to apply ergodic theorems, one has to work for the existence of the invariant measure (while uniqueness is easy).

\subsection{Markov Structure of the Tree}
\label{sec:Structure}

The content of this short section is mainly repetition of material from \cite{RudasToth2008}. These concepts and statements allow for a good understanding of the tree structure, on which our main construction (in Section~\ref{sec:ConstructionOfLeaf}) relies. Lemma~\ref{StrongMarkovLemma} will also be used formally in Section~\ref{sec:ConstructionOfLeaf} to get an easy proof of the fact that our step-by-step construction of the limiting tree is equivalent to the original model (Proposition~\ref{prop:constrequiv}).

\begin{definition}
We say that a system of random variables $(Y_x)_{x\in\vertices}$
constitutes a \emph{tree-indexed Markov field} if for any
$x\in\vertices$, the distribution of the collection of variables 
$\left(Y _y:\,x\isancestor y\right)$, and that of 
$\left(Y _z:\,x\not\isancestoreq z\right)$, are conditionally
independent, given $Y _x$.

\end{definition}

We state the following:

\begin{lemma}
\label{StrongMarkovLemma}

For each $x\in\vertices$ let $\waitglob _x$ denote the vector
$\waitglob _x:= (\wait_x,\glob _x)$. Then the collections of variables 
$\mathcal{A}_x:=\left(\waitglob _y:\,x\isancestor y\right)$ and 
$\mathcal{B}_x:=\left(\waitglob _z:\,x\not\isancestoreq z;\,
\wait _x\right)$ are conditionally independent, given $\glob _x$.

\end{lemma}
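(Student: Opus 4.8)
The plan is to exploit the branching-process structure encoded in the alternative construction of Remark~\ref{def:alternativeConstruction}, together with the decomposition~(\ref{eq:decomposetheta}) of $\glob_x$. The key observation is that once $\glob_x$ is known, the ``outside'' information $\mathcal{B}_x$ (the waiting times and $\glob$-variables of vertices not below $x$, plus $\wait_x$ itself) and the ``inside'' information $\mathcal{A}_x$ (the $\waitglob_y$ for $y$ strictly below $x$) should decouple. So the first step is to set up the right sigma-algebras and reduce the statement to a more elementary fact about the subtree rooted at $x$.

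First I would use the self-similarity of the model: conditionally on the birth time $\birth_x$, the subtree $\subtree{\tree}{x}(\cdot)$, time-shifted to start at $\birth_x$, is an independent copy of the whole process $\tree$. This is most transparent in the construction of Remark~\ref{def:alternativeConstruction}, where the variables $\tilde\wait_y$ are globally independent and those with $x\isancestor y$ form exactly the family of waiting times generating an independent copy of the tree; correspondingly $\glob_x$ is a measurable function of $(\tilde\wait_y : x\isancestoreq y)$ alone, in fact of $(\tilde\wait_y : x\isancestor y)$ since the $\wait$-variables at the boundary between inside and outside belong to the ancestor. Hence $\mathcal{A}_x$ and $\glob_x$ are both functions of the ``inside'' block of independent variables, while $\mathcal{B}_x$ — comprising $\wait_x$ and all $\waitglob_z$ with $x\not\isancestoreq z$ — is a function of the complementary ``outside'' block, together possibly with $\birth_x$, which is again outside. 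The only subtlety is that the $\glob_z$ for ancestors $z \isancestor x$ depend, through~(\ref{eq:decomposetheta}), on $\glob_x$ and on quantities inside; I would handle this by writing, for each strict ancestor $z$ of $x$, the relation $\glob_z = \E^{-\malt(\birth_{z'}-\birth_z)}\glob_{z'} + (\text{terms involving only vertices not }\isancestoreq x)$, where $z'$ is the child of $z$ on the path to $x$, and iterating down to $x$. This exhibits each such $\glob_z$ as a deterministic function of $\glob_x$ and of $\mathcal{B}_x$-measurable data, so conditioning on $\glob_x$ removes the entanglement.

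The core step, then, is: the inside block $(\tilde\wait_y : x\isancestor y)$ is independent of the outside block (and of $\birth_x$); $\glob_x$ and $\mathcal{A}_x$ are measurable with respect to the inside block; $\wait_x$ and the ``genuinely outside'' $\waitglob_z$ are measurable with respect to the outside block; and the ancestral $\glob_z$'s become measurable with respect to $\sigma(\glob_x) \vee \sigma(\text{outside})$ by the iterated version of~(\ref{eq:decomposetheta}). From independence of the two blocks it follows that, conditionally on $\glob_x$, the family $\mathcal{A}_x$ is independent of $\mathcal{B}_x$: indeed for bounded measurable $f,g$ we get $\expect[f(\mathcal{A}_x)g(\mathcal{B}_x)\mid\glob_x] = \expect[f(\mathcal{A}_x)\mid\glob_x]\,\expect[g(\mathcal{B}_x)\mid\glob_x]$ because $g(\mathcal{B}_x)$ depends on the inside block only through $\glob_x$. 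This is precisely the claimed conditional independence.

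I expect the main obstacle to be the bookkeeping around the boundary between ``inside'' and ``outside'': deciding cleanly which waiting-time variables of Remark~\ref{def:alternativeConstruction} belong to which block (the convention that $\wait_{zi}$ is indexed by the child but ``owned'' by the parent $z$ matters here), and verifying that every $\glob$-variable appearing in $\mathcal{B}_x$ — in particular those of ancestors of $x$ — is indeed a function of $\glob_x$ and of variables strictly outside the subtree of $x$, via a careful finite iteration of~(\ref{eq:decomposetheta}). Once that measurability dictionary is pinned down, the probabilistic content is just the independence of two disjoint blocks of independent random variables, which is immediate. A secondary point worth a sentence is measurability/regular-conditional-probability issues arising because the index set $\vertices$ is countably infinite and $\glob_x$ takes values in a non-compact space, but standard facts about regular conditional distributions on Polish spaces cover this.
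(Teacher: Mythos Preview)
Your proposal is correct and follows essentially the same route as the paper's proof: both arguments use the alternative construction of Remark~\ref{def:alternativeConstruction} to split the independent waiting times into an ``inside'' block $A_x=(\wait_y:x\isancestor y)$ and an ``outside'' block $B_x=(\wait_y:x\not\isancestor y)$, observe that $\mathcal{A}_x$ and $\glob_x$ are measurable with respect to $A_x$, and use the decomposition~(\ref{eq:decomposetheta}) to show that $\mathcal{B}_x$ is measurable with respect to $B_x\cup\{\glob_x\}$, from which the conditional independence is immediate. The paper's version is terser---it only writes out the decomposition for $\glob_{\parent{x}}$ and leaves the iteration up the ancestral line implicit---whereas you spell out the iteration and the boundary bookkeeping more carefully, but the argument is the same.
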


\begin{proof}
\smartqed

Recall Remark \ref{def:alternativeConstruction}, 
the alternative construction of $\tree (t)$. From that, it is 
straightforward that the collection
$\mathcal{A}_x$ is in fact constructed by the set of independent variables 
$A_x:=\left(\wait_y:\,x\isancestor y\right)$.

Similarly, recall (\ref{eq:decomposetheta}), 
and decompose $\glob _{\parent{x}}$, where $\parent{x}$ is
the parent of vertex $x$,
\begin{equation*}
\glob _{\parent{x}} = 
\sum _{j=1}^{\childmax} 
\E ^{-\malt (\birth _{\parent{x}j}-\birth _{\parent{x}})} 
\glob _{\parent{x}j} = 
\sum _{j=1}^{\childmax} 
\E ^{-\malt (\wait _{\parent{x}1}+
\wait_{\parent{x}2}+ \cdots + 
\wait _{\parent{x}j})} \glob _{\parent{x}j}\;.
\end{equation*}
This means that if we take the set of variables 
$B_x:=\left(\wait_y:\,x\not\isancestor y\right)$, then 
$\mathcal{B}_x$ is constructed by $B_x\cup \{\glob_x\}$.

Given $\glob_x$, the two collections
$A_x\cup\{\glob_x\}$ and $B_x\cup \{\glob_x\}$
are conditionally independent, this way the same is true for
$\mathcal{A}_x$ and $\mathcal{B}_x$, so the statement of the lemma
follows.

\end{proof}

\begin{corollary}
\label{GlobMarkovCorollary}
The variables $(\glob _x)_{x\in\vertices}$ constitute a tree-indexed
Markov field.
\end{corollary}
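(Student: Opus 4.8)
The plan is to read the corollary off directly from Lemma~\ref{StrongMarkovLemma}: that lemma already carries all the probabilistic content, and what remains is a short bookkeeping argument about index sets together with the elementary fact that conditional independence is preserved under taking measurable functions of the families involved.

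First I would fix an arbitrary $x\in\vertices$ and note that $\{y:\,x\isancestor y\}$, $\{z:\,x\not\isancestoreq z\}$ and $\{x\}$ partition $\vertices$: a vertex $v\ne x$ either is a strict descendant of $x$, and then lies in the first set, or it is not, and then $x\not\isancestoreq v$, so it lies in the second set; moreover these two sets are disjoint from each other and from $\{x\}$. In particular neither of the collections $(\glob_y:\,x\isancestor y)$ and $(\glob_z:\,x\not\isancestoreq z)$ contains the conditioning variable $\glob_x$, which is exactly the configuration demanded by the definition of a tree-indexed Markov field.

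Next I would recognise each of these two collections as a measurable function of the families $\mathcal{A}_x$ and $\mathcal{B}_x$ from Lemma~\ref{StrongMarkovLemma}. Since $\waitglob_y=(\wait_y,\glob_y)$, the vector $(\glob_y:\,x\isancestor y)$ is a coordinate projection of $\mathcal{A}_x=(\waitglob_y:\,x\isancestor y)$; similarly $(\glob_z:\,x\not\isancestoreq z)$ is a coordinate projection of $\mathcal{B}_x=(\waitglob_z:\,x\not\isancestoreq z;\,\wait_x)$ — here it is worth observing that this ``outside'' collection is already contained in $\mathcal{B}_x$ and does not make use of the extra coordinate $\wait_x$. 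By Lemma~\ref{StrongMarkovLemma}, $\mathcal{A}_x$ and $\mathcal{B}_x$ are conditionally independent given $\glob_x$, and conditional independence is inherited by measurable functions of the two families; hence $(\glob_y:\,x\isancestor y)$ and $(\glob_z:\,x\not\isancestoreq z)$ are conditionally independent given $\glob_x$. Since $x$ was arbitrary, this is precisely the defining property of a tree-indexed Markov field for $(\glob_x)_{x\in\vertices}$.

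There is essentially no genuine obstacle in this argument; the only points requiring (minimal) care are the correct matching of the index sets — in particular verifying that the conditioning variable is $\glob_x$ on both sides and that no collection secretly contains $\glob_x$ — and invoking the routine fact that $X\perp Y\mid Z$ implies $f(X)\perp g(Y)\mid Z$ for measurable $f,g$.
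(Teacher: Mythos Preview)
Your proposal is correct and follows exactly the paper's approach: the paper's proof is the single line ``Direct consequence of Lemma~\ref{StrongMarkovLemma}, since $\waitglob_x=(\wait_x,\glob_x)$,'' and you have simply unpacked that sentence carefully by exhibiting the two $\glob$-collections as coordinate projections of $\mathcal{A}_x$ and $\mathcal{B}_x$ and invoking preservation of conditional independence under measurable maps.
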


\begin{proof}
\smartqed
Direct consequence of Lemma \ref{StrongMarkovLemma}, since 
$\waitglob _x=(\wait _x,\glob _x)$.
\end{proof}

\begin{definition}
We introduce the variables $\fracrelat_x$, indexed by $\vertices$. For the root 
we leave $\fracrelat_{\emptyset}$ undefined. For any other vertex $y'$ which
has a parent $y$, so for any $y'=yi$ with $i\in\I$, let
\begin{equation*}
\fracrelat_{yi}:=
\lim_{t\to\infty}
\frac{|\subtree{\tree}{yi}(t)|}{|\subtree{\tree}{y}(t)|}=
\frac{\E ^{   -\malt (\birth_{yi}-\birth_y)   } \glob_{yi}}{\glob_y}=
\frac{\relat_{yi}}{\relat_y}\;.
\end{equation*}
\end{definition}

Notice that for $x=(i_1i_2\hdots i_n)$, $\relat _x$ is a
telescopic product,
\begin{equation*}
\relat _x=\relat _{i_1}\frac{\relat_{i_1i_2}}{\relat_{i_1}}
\frac{\relat _{i_1i_2i_3}}{\relat _{i_1i_2}}
\hdots
\frac{\relat_ {i_1\hdots i_n}}{\relat _{i_1\hdots i_{n-1}}}=
\fracrelat_{i_1}\fracrelat_{i_1i_2}\fracrelat_{i_1i_2i_3}
\hdots\fracrelat_{i_1\hdots i_n}\;.
\end{equation*}
Equivalently, for $|x|=n$,
\begin{equation}
\label{eq:decomposelogrelat}
\log\relat _x=\sum_{l=1}^n \log \fracrelat_{x\mid _l},
\end{equation}
where $x\mid_l$ denotes the first $l$ letters of the string $x$ (which denotes the ancestor of $x$ on the $l$-th level of the tree).

\subsection{Construction of the Random Leaf}
\label{sec:ConstructionOfLeaf}

We will now give a different construction of the tree from the ones seen before. Namely, we construct the system of $\waitglob _x=(\wait _x,\glob _x)$ variables starting from the root, and going step-by-step, from generation to generation. Together with these, we compute the $\fracrelat _x$ and $\relat _x$ variables, and use them to construct a random path $\{y_n\}$ from the root to the edge of the infinite tree. The $y_n$ will be chosen from the children of $y_{n-1}$ in a ``size-biased'' way. We will use this path in the proofs of our results. For the sake of simple notation, we suppose for a moment that the maximum number of children of any vertex is two, that is, $\childmax = 2$. It is straightforward to construct the corresponding generations and the random path for any $\childmax < \infty$. For the rest of this section we treat the distribution of $\glob$ as known.

Recall that 
$\wait _1$, $\wait _2$, $\glob _1$ and $\glob_2$ are independent. Keeping that in mind, using
\begin{equation}\label{eq:decomposethetaK=2}
\glob = \E ^{-\malt \wait _1}(\glob _1+\E^{-\malt \wait _2}\glob _2),
\end{equation}
we will consider the conditional joint distribution of $(\wait_2,\glob_1,\glob_2)$, given $\glob$. (Of course, $\wait_1$ is -- conditionally -- a deterministic function of these, but we will not use the value.)
%
%
Now we can construct the generations, together with the random path $y_n$, in the following steps.
\begin{enumerate}
\item
Pick $\glob _{\emptyset}$ at random, according to its distribution, and fix $\wait _{\emptyset} = 0$. Also, fix $y_0={\emptyset}$.
\item \emph{First generation}
\begin{enumerate}
\item Pick $(\wait_2,\glob_1,\glob_2)$ according to their conditional
distribution, given $\glob _{\emptyset}$
\item Define $\relat _1=\fracrelat _1=\frac{\glob_1}{\glob_1+\E^{-\malt \wait _2}\glob_2}$
(which is equal to $\frac{\E^{-\malt \wait _1}\glob_1}{\glob}$, and happens not to depend on $\wait_1$).
Also define
$\relat _2=\fracrelat _2=\frac{\E^{-\malt \wait _2}\glob_2}{\glob_1+\E^{-\malt \wait _2}\glob_2}$.
\item
Choose $y_1$ according to the conditional probabilities $\prob (y_1 = 1|\glob,\wait_2,\glob_1,\glob_2) = \fracrelat _1$ and 
$\prob (y_1 = 2|\glob,\wait_2,\glob_1,\glob_2) = \fracrelat _2$.
%
%
%
\end{enumerate}
\item \emph{Second generation}
\begin{enumerate}
\item
Repeat the steps seen before for the progeny of vertex $1$, to get 
$(\wait_{12},\glob_{11},\glob_{12})$ and also
$\fracrelat _{11}$ and $\fracrelat _{12}$. This is done only using the information carried by $\glob _1$, conditionally independently of $(\glob,\glob_2)$. This conditional independence is the consequence of Corollary \ref{GlobMarkovCorollary}. Since we already know $\fracrelat _1$, we can now compute the values 
$\relat_{11}=\fracrelat_1\fracrelat _{11}$ and 
$\relat_{12}=\fracrelat_1\fracrelat _{12}$.
\item
Independently of the previous steps, use $\glob _2$ to get $(\wait_{22},\glob_{21},\glob_{22})$, $\fracrelat_{21}$ and $\fracrelat _{22}$. We then also have $\relat_{21}$ and $\relat_{22}$.
\item
Choose $y_2$ from the children of $y_1$, according to the conditional distribution given by the $\fracrelat _x$ variables in the second generation. Namely, if $y_1=1$, 
\begin{eqnarray*}
\prob (y_2=11 | y_1=1,\wait_{12},\glob_{11},\glob_{12})=\fracrelat_{11} \\
\prob (y_2=12 | y_1=1,\wait_{12},\glob_{11},\glob_{12})=\fracrelat_{12},
\end{eqnarray*}
and if  $y_1=2$,
\begin{eqnarray*}
\prob (y_2=21 | y_1=2,\wait_{22},\glob_{21},\glob_{22})=\fracrelat_{21} \\
\prob (y_2=22 | y_1=2,\wait_{22},\glob_{21},\glob_{22})=\fracrelat_{22}, 
\end{eqnarray*}
conditionally independently of the entire past of the construction.
\end{enumerate}
\item \emph{$n$-th generation} 
\begin{enumerate}
\item
Having constructed all the $\glob_x$ with $|x|=n-1$, split these all in the way above,
conditionally independently of each other (and the entire past of the construction), to get the $\fracrelat _z$ and $\relat _z$ variables in the $n-th$ generation. In particular,
$$\fracrelat_{xi}=\frac{\E^{-\malt(\wait_{x1}+\cdots +\wait_{xi})}\glob_{xi}}{\glob_x}.$$
\item
According to the value of $y_{n-1}$, choose $y_n$ from its children, according to the corresponding $\fracrelat _z$ distribution (conditionally independently of the entire past).
\end{enumerate}
\end{enumerate}

\begin{remark}\label{rem:branching_sizebiased}
As mentioned before, our model is intimately related to a branching process, as discussed in \cite{rtv}.
In branching processes, the idea of size biasing is not at all new,
as its importance is emphasized e.g. in \cite{LyonsPemantlePeres95}.
\end{remark}

\begin{remark}\label{rem:not_fragmentation}
This step-by-step construction of the tree is similar to the fragmentation processes discussed
e.g. in \cite{bertoin06}. There the usage of ``randomly tagged branches'' based on size-biased
choices is a standard technique, see \cite{bertoin06}, Section 1.2.3. Note however, that our
step-by-step construction is \emph{not} a fragmentation process in the classical sense. 
In particular, the sequence of measures $\measure_n$ is \emph{not} Markov: the process
also ``remembers'' the values $\glob_x$ which influence how the weight $\measure_n(\{x\})$
at $x$ is further ``fragmented''.
\end{remark}

\begin{proposition}
\label{prop:constrequiv}
With $\waitglob_x=(\wait_x,\glob_x)$ as before, the distribution of $\{\waitglob _x\}_{x\in\vertices}$ in the above
construction is identical to the distribution in the randomly
growing tree model.
\end{proposition}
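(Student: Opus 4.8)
The plan is to prove, by induction on $n$, that the joint law of $(\waitglob_x : |x|\le n)$ is the same in the step-by-step construction as in the original randomly growing tree; since the cylinder sets based on finitely many generations generate the $\sigma$-algebra on $\vertices$-indexed families, this yields the claim for the whole family $\{\waitglob_x\}_{x\in\vertices}$. The case $n=0$ is immediate: in both models $\wait_\emptyset=0$ and $\glob_\emptyset$ is drawn from the law of $\glob$. For the inductive step it is enough to compare, in the two models, the conditional law of generation $n$, i.e.\ of $(\waitglob_{xi} : |x|=n-1,\ i\in\I)$, given everything in generations $0,\dots,n-1$. In the construction this conditional law is, by definition: independently over the vertices $x$ with $|x|=n-1$, one draws $(\wait_{x2},\dots,\wait_{x\childmax},\glob_{x1},\dots,\glob_{x\childmax})$ from the conditional law of $(\wait_2,\dots,\wait_\childmax,\glob_1,\dots,\glob_\childmax)$ given $\{\glob=\glob_x\}$, and $\wait_{x1}$ is recovered from the decomposition (\ref{eq:decomposetheta}) (for $\childmax=2$ this is (\ref{eq:decomposethetaK=2})), yielding the full vectors $\waitglob_{xi}$.

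The heart of the matter is to check that the original model produces exactly this conditional law. Two facts are needed. First, a branching/regeneration property: for a fixed $x$, the subtree $\subtree{\tree}{x}$ observed from time $\birth_x$ onwards is a copy of $\tree$, with $\glob_x,\wait_{xi},\glob_{xi}$ in the roles of $\glob,\wait_i,\glob_i$. This is transparent from the alternative construction in Remark~\ref{def:alternativeConstruction}: the variables $(\tilde\wait_{xy'})_{y'\in\vertices}$ form an independent copy of $(\tilde\wait_{y'})_{y'\in\vertices}$, and the subtree quantities are the same functionals of the former as the whole-tree quantities are of the latter. In particular $(\glob_x,\wait_{x2},\dots,\glob_{x\childmax})\stackrel{d}{=}(\glob,\wait_2,\dots,\glob_\childmax)$, so the conditional law of $(\wait_{x2},\dots,\glob_{x\childmax})$ given $\glob_x$ is precisely the one used in the construction. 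Second, Lemma~\ref{StrongMarkovLemma}: conditionally on $\glob_x$, the collection $\mathcal A_x=(\waitglob_y:x\isancestor y)$ is independent of $\mathcal B_x$, and $\mathcal B_x$ contains $\waitglob_z$ for every $z$ that is not a descendant of $x$. Applying this in turn to each of the finitely many vertices $x$ with $|x|=n-1$ — so that each $\mathcal B_x$ absorbs the $\glob$'s and the whole subtree-data of all the other level-$(n-1)$ vertices — and combining the successive conditional independences in the standard way, one gets that, conditionally on $(\glob_x:|x|=n-1)$, the subtree-data $(\waitglob_y:x\isancestor y)$ for distinct level-$(n-1)$ vertices $x$ are mutually independent, each with conditional law depending only on its own $\glob_x$. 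By Lemma~\ref{StrongMarkovLemma} again, the conditioning may be enlarged from $(\glob_x:|x|=n-1)$ to all of generations $0,\dots,n-1$ without changing these conditional laws. Reading off the first layer of each subtree reproduces exactly the product form of the previous paragraph, closing the induction.

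I expect the main obstacle to be this last bookkeeping step: Lemma~\ref{StrongMarkovLemma} is phrased for a single cut vertex, so turning it into simultaneous conditional independence of all the subtrees hanging off generation $n-1$ — and verifying that the relevant conditional laws are unchanged when the conditioning $\sigma$-field is enlarged from ``$\glob_x$'' to ``all earlier generations'' — requires a careful iteration of the lemma. The remaining pieces (the base case, the reduction to finite-dimensional distributions, and the identification of the single-vertex conditional law through Remark~\ref{def:alternativeConstruction}) are routine.
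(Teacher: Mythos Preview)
Your proposal is correct and follows essentially the same approach as the paper's own proof: reduce to finite-dimensional marginals via cylinder sets, then induct on the generation level, using the construction's built-in conditional laws together with Lemma~\ref{StrongMarkovLemma} to match the original model. The paper's version is somewhat terser---it phrases the inductive step as ``the joint distributions over each family $W$ (a vertex and its children) are correct, and the conditional independence of Lemma~\ref{StrongMarkovLemma} then characterizes the full law''---whereas you spell out more explicitly the iteration over the level-$(n-1)$ vertices and the enlargement of the conditioning $\sigma$-field, but the underlying argument is the same.
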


\begin{proof}
The statement we are proving is about the joint distribution of countably infinitely many (real-valued) random variables, so this joint distribution can be viewed as a measure on $\R^\N$,
\footnote{we could write $([0,\infty)\times[0,\infty))^\vertices$, but a measure on this can be viewed as a special case of a measure on $\R^\N$.} with the $\sigma$-algebra of measurable sets being the $\sigma$-algebra generated by cylinder sets -- defined in terms of finitely many of the $\wait_x$ and $\glob_x$. So to prove that the two measures on $\R^\N$ -- given by the two constructions -- coincide, it is enough to see that they coincide on such cylinder sets.

In terms of joint distributions: It is enough to see that the distributions of $\{\waitglob _x\}_{x\in\vertices}$ coming from the two constructions have identical finite-dimensional marginals. In particular, it is enough to show that for every $n$, the distribution of $\{\waitglob _x\}_{x\in\vertices, |x|\le n}$ in the above construction is identical to the distribution in the randomly growing tree model.

This is easy to see by induction:
\begin{itemize}
 \item For $n=0$ we have chosen the law of $\glob_\emptyset$ properly by construction, also $\wait_\emptyset=0$ as it should be.
 \item For $n=1$, the $\{\waitglob _x\}_{x\in\vertices, |x|=1}$ are constructed to have the right conditional joint distribution, given $\glob_\emptyset$, so the $n=0$ statement implies the $n=1$ statement. In particular, the $\glob_x$ for $|x|=1$ are distributed as they should be.
 \item For $n\ge 2$, the same argument (the construction) gives inductively that the joint distribution of the $\{\waitglob_x\}_{x\in W}$ is what it should be, for any family $W$ of $x$-es which consists of a vertex and its children. However, the construction also ensures the conditional independence of $\{\waitglob_y\}_{x\isancestor y}$ and $\{\waitglob_z\}_{x\not\isancestoreq z}$ given $\glob _x$, as in Lemma~\ref{StrongMarkovLemma}. This, together with the joint distributions of the $\{\waitglob_x\}_{x\in W}$ (with $W$ as above) already characterizes the joint distribution of $\{\waitglob _x\}_{x\in\vertices, |x|\le n}$.
\end{itemize}
\end{proof}

From now on, we will use the alternative construction of the tree in our discussion, so Proposition~\ref{prop:constrequiv} is used all the time in the proof, but this will not be formally mentioned.

\begin{definition}
Denote by $\tree$ the $\sigma$-algebra generated by $\{\wait_x \mid x\in\vertices\}$, which contains the full tree evolution.
\end{definition}
Note that for any $x\in\vertices$, $\glob _x$ is measurable with respect to $\tree$, so $\tree$ is also the $\sigma$-algebra generated by $\{\wait_x, \glob_x \mid x\in\vertices\}$, namely all the data about the tree -- but not about the random leaf -- during the parallel construction of the tree and the random leaf just presented.

The usefulness of the random leaf we constructed is shown by the following:

\begin{lemma}~\label{lem:randomleaf_measure}
Conditioned on $\tree$,
the conditional distribution of the leaf $\,\lim_n y_n$ is exactly the measure
$\measure$. Similarly, the conditional distribution of $y_n$ is exactly
$\measure _n$.
\end{lemma}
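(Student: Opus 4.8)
The plan is to prove the statement for $y_n$ first (the finite-level claim) and then pass to the limit to obtain the claim for $\lim_n y_n$. The key observation is that, by construction, the random path $(y_0, y_1, \dots, y_n)$ is chosen one level at a time using exactly the conditional probabilities $\prob(y_l = y_{l-1}i \mid \text{past}, \text{tree data}) = \fracrelat_{y_{l-1}i}$, and that these choices depend on the tree-data $\sigma$-algebra $\tree$ only through the $\fracrelat_z$ variables (equivalently, the $\relat_z$ variables) — the leaf-construction randomness is independent of $\tree$ given the $\fracrelat$'s. Therefore, conditioned on $\tree$, for any fixed $x$ with $|x|=n$,
\begin{equation*}
\prob(y_n = x \mid \tree) = \prod_{l=1}^{n} \fracrelat_{x\mid_l} = \relat_x = \measure_n(\{x\}),
\end{equation*}
where the middle equality is the telescoping identity (\ref{eq:decomposelogrelat}). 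This is precisely the assertion that the conditional law of $y_n$ given $\tree$ is $\measure_n$.

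The next step is to make the conditioning rigorous: one has to check that, given $\tree$, the successive choices of $y_1, y_2, \dots$ really are conditionally independent with the stated conditional laws. This is exactly what the construction in Section~\ref{sec:ConstructionOfLeaf} guarantees — each $y_l$ is chosen ``conditionally independently of the entire past of the construction'' given the relevant $\fracrelat_z$ values, and those values are $\tree$-measurable. So I would phrase it as: condition on $\tree$, then the law of the path $(y_n)$ is that of a Markov chain on $\vertices$ started at $\emptyset$ whose transition probabilities from $y$ to $yi$ are $\fracrelat_{yi}$; multiplying transition probabilities along the path and using the telescoping product gives $\prob(y_n = x\mid\tree) = \relat_x$ for every $x$ on level $n$. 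Since $\measure_n(\{x\}) = \relat_x$ by definition, the conditional law of $y_n$ is $\measure_n$, proving the second assertion.

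For the first assertion, note that the leaf $\lim_n y_n \in \leaves$ exists because the $y_n$ are nested (each $y_n$ is a child of $y_{n-1}$), so the path determines a unique element of $\leaves = \I^\infty$. To identify the conditional law of this leaf given $\tree$, it suffices to check it agrees with $\measure$ on the generating family of cylinder sets $\cylinder{\leaves}{x}$, $x\in\vertices$. But $\lim_n y_n \in \cylinder{\leaves}{x}$ if and only if $x \isancestoreq y_{|x|}$, i.e.\ $y_{|x|} = x$, so
\begin{equation*}
\prob\big(\lim_n y_n \in \cylinder{\leaves}{x} \;\big|\; \tree\big) = \prob(y_{|x|} = x \mid \tree) = \relat_x = \measure(\cylinder{\leaves}{x}),
\end{equation*}
using the finite-level result just established and the definition of $\measure$. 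Since two measures on $\leaves$ that agree on all cylinder sets agree on the generated $\sigma$-algebra, the conditional law of $\lim_n y_n$ given $\tree$ is $\measure$.

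The only genuinely delicate point is the consistency/measurability bookkeeping: one must be sure that the ``conditionally independent of the past'' choices in the construction, when combined with the fact that all the $\fracrelat_x$ and $\relat_x$ are $\tree$-measurable (which follows because each $\glob_x$ is $\tree$-measurable, as noted right after the definition of the $\sigma$-algebra $\tree$), really do yield $\prob(y_1 = x_1, \dots, y_n = x_n \mid \tree) = \prod_{l=1}^n \fracrelat_{x_l}$ whenever $x_{l-1} \isancestoreq x_l$ with $|x_l| = l$. This is where I would be careful to invoke Proposition~\ref{prop:constrequiv} (so that $\tree$ is the same object in the original model and in the construction) and the explicit conditional probabilities written out level-by-level in Section~\ref{sec:ConstructionOfLeaf}; everything else is the elementary telescoping product (\ref{eq:decomposelogrelat}) and the $\pi$-$\lambda$/cylinder-set uniqueness argument. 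I do not expect any real analytic obstacle here — the content is entirely in the bookkeeping of conditional laws.
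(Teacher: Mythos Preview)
Your proposal is correct and takes essentially the same approach as the paper's own proof: the paper proves the finite-level claim by induction on $n$ (using $\prob(y_{n+1}=xi\mid y_n=x,\tree)=\fracrelat_{xi}$ together with $\relat_x\fracrelat_{xi}=\relat_{xi}$), which is exactly your telescoping-product computation unrolled step by step, and then deduces the leaf statement by reducing to cylinder sets via $\prob(y_\infty\in\cylinder{\leaves}{x}\mid\tree)=\prob(y_{|x|}=x\mid\tree)$, just as you do. Your additional remarks on the measurability bookkeeping are accurate but go somewhat beyond what the paper spells out.
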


\begin{proof}
 The second statement can be seen by induction: $\measure_0$ obviously gives weight $1$ to the single point $\emptyset=y_0$. Later, by construction of $y_{n+1}$, for any $x\in\vertices$ with $|x|=n$ and any $i\in\I$ we have $\prob(y_{n+1}=xi\,|\,y_n=x,\tree)=\fracrelat_{xi}$, so if we assume inductively that $\prob(y_n=x\,|\,\tree)=\measure_n(\{x\})=\relat_x$, then $\prob(y_{n+1}=xi\,|\,\tree)=\relat_x \fracrelat_{xi}=\relat_{xi}=\measure_{n+1}(\{xi\})$ for any $|xi|=n+1$, so $y_{n+1}$ is indeed distributed according to $\measure_{n+1}$.

The first statement is an immediate consequence of the second, since for any cylinder set $\cylinder{\leaves}{x}$, if $|x|=n$, we have $\prob(y_\infty \in \cylinder{\leaves}{x}\,|\,\tree)=\prob(y_n=x\,|\,\tree)=
\measure_n(\{x\})=\measure(\cylinder{\leaves}{x})$.
\end{proof}

\begin{corollary}
\label{cor:entropy_with_Markov}
Conditioned on the tree, the conditional expectation of
$-\log \relat _{y_n}$ is exactly $H_n$.
\end{corollary}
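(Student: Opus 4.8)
The plan is to derive this immediately from Lemma~\ref{lem:randomleaf_measure}. That lemma tells us that, conditioned on $\tree$, the random vertex $y_n$ is distributed according to $\measure_n$, i.e.\ $\prob(y_n=x\mid\tree)=\measure_n(\{x\})=\relat_x$ for every $x$ with $|x|=n$. The quantity $-\log\relat_{y_n}$ is a function of $y_n$ alone (composed with the map $x\mapsto-\log\relat_x$), so the conditional expectation can be written as a finite sum over the $n$-th generation.

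Concretely, I would first note that each $\relat_x$ is $\tree$-measurable (this is recorded right after the definition of the $\sigma$-algebra $\tree$: the $\glob_x$, hence the $\relat_x$, are measurable with respect to $\tree$). Hence, when conditioning on $\tree$, every $\relat_x$ may be pulled out as a constant. Then I would compute
\begin{equation*}
\expect\!\left[-\log\relat_{y_n}\,\middle|\,\tree\right]
=\sum_{|x|=n}\prob(y_n=x\mid\tree)\,\bigl(-\log\relat_x\bigr)
=\sum_{|x|=n}\relat_x\bigl(-\log\relat_x\bigr)=H_n,
\end{equation*}
where the second equality is Lemma~\ref{lem:randomleaf_measure} and the last is the definition of $H_n$.

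There is essentially no obstacle here; the only point requiring a moment's care is the interchange of the conditional expectation with the finite sum and the treatment of the $\tree$-measurable factors $\relat_x$ as constants, both of which are routine. The substantive content has already been established in Lemma~\ref{lem:randomleaf_measure}, so this corollary is a one-line consequence.
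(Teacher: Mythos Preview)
Your proposal is correct and essentially identical to the paper's own proof: both apply Lemma~\ref{lem:randomleaf_measure} to write $\expect(-\log\relat_{y_n}\mid\tree)=-\sum_{|x|=n}\prob(y_n=x\mid\tree)\log\relat_x=-\sum_{|x|=n}\relat_x\log\relat_x=H_n$. Your added remark that the $\relat_x$ are $\tree$-measurable is a nice touch, but otherwise there is nothing to distinguish the two arguments.
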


\begin{proof}
Indeed, by the above lemma,
$$\expect(-\log \relat _{y_n}\,|\,\tree)=-\sum_{|x|=n} \prob(y_n=x\,|\,\tree) \log \relat _x =-\sum_{|x|=n} \measure_n(\{x\}) \log \relat _x =-\sum_{|x|=n} \relat_x \log \relat _x=H_n.$$
\end{proof}

\subsection{Markov Processes Along the Random Path}

The key to the proof is the following easy observation:

\begin{proposition}\label{prop:Xmarkov}
The stochastic process $X_n=\glob _{y_n}$ ($n=0,1,2,\dots$) is a homogeneous Markov process. By ``homogeneous'' we mean that the transition kernel does not depend on $n$.
\end{proposition}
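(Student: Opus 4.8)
The plan is to show that the conditional law of $X_{n+1}=\glob_{y_{n+1}}$ given the entire history $X_0,\dots,X_n$ (and indeed given more, namely all of $\tree$ up to generation $n$ together with the choices $y_0,\dots,y_n$) depends only on $X_n=\glob_{y_n}$, and that this dependence has the same form for every $n$. First I would recall exactly what happens at step $(n+1)$ of the construction in Section~\ref{sec:ConstructionOfLeaf}: given $y_n=x$ with $|x|=n$, one takes $\glob_x$, ``splits'' it conditionally independently of everything else to produce $(\wait_{x1},\dots,\wait_{x\childmax})$ and $\glob_{x1},\dots,\glob_{x\childmax}$ together with the ratios $\fracrelat_{xi}=\E^{-\malt(\wait_{x1}+\cdots+\wait_{xi})}\glob_{xi}/\glob_x$, and then one chooses $y_{n+1}=xi$ with probability $\fracrelat_{xi}$. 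Both the splitting and the size-biased choice depend on the past only through the single number $\glob_x=X_n$; this is exactly the content of Corollary~\ref{GlobMarkovCorollary} (tree-indexed Markov field) combined with Proposition~\ref{prop:constrequiv}. Hence $X_{n+1}=\glob_{y_{n+1}}$ is obtained from $X_n$ by a fixed stochastic recipe, which is the definition of a homogeneous Markov chain.

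More concretely, I would define the transition kernel $\kernel$ explicitly. Starting from a value $\glob_\emptyset=u$, let $(\wait_1,\dots,\wait_\childmax,\glob_1,\dots,\glob_\childmax)$ have the conditional distribution appearing in the construction given $\glob=u$ (this is well-defined by the decomposition $\glob=\sum_{i=1}^\childmax \E^{-\malt(\wait_1+\cdots+\wait_i)}\glob_i$ and the known joint law of the $\wait_i,\glob_i$), set $\fracrelat_i=\E^{-\malt(\wait_1+\cdots+\wait_i)}\glob_i/u$, pick an index $I\in\I$ with $\prob(I=i)=\fracrelat_i$, and put the mass on $\glob_I$. Then $\kernel(u,\cdot)$ is the law of $\glob_I$. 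I would note this kernel genuinely does not depend on $n$: the conditional ``splitting'' law and the size-biased selection are the same operation at every generation, only the input $\glob_x$ changes. For the Markov property itself, I would spell out that conditioning on $(X_0,\dots,X_n)$ can be enlarged to conditioning on $\tree$-data up to generation $n$ and the path $(y_0,\dots,y_n)$; then $\prob(X_{n+1}\in A\mid \text{that history})=\sum_{i\in\I}\expect\big(\fracrelat_{y_ni}\,\ind\{\glob_{y_ni}\in A\}\,\big|\,\glob_{y_n}\big)=\kernel(\glob_{y_n},A)=\kernel(X_n,A)$, where the middle equality uses that the split of $\glob_{y_n}$ is conditionally independent of the past given $\glob_{y_n}$ (Corollary~\ref{GlobMarkovCorollary}), and that the selection of $y_{n+1}$ among the children of $y_n$ is made using only the freshly generated $\fracrelat_{y_ni}$'s, conditionally independently of the past. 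Since the right-hand side depends on the history only through $X_n$ and the kernel $\kernel$ is independent of $n$, $X_n$ is a homogeneous Markov chain.

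The main obstacle is a bookkeeping/measurability one rather than a conceptual one: one must be careful that ``the entire past of the construction'' — which includes not just $X_0,\dots,X_{n-1}$ but also the $\glob_z$ for vertices $z$ that are \emph{not} on the path $y_0,\dots,y_n$, plus the waiting times, plus the random selections — is indeed conditionally independent of the split of $\glob_{y_n}$ given $\glob_{y_n}$. This is where Corollary~\ref{GlobMarkovCorollary} (equivalently Lemma~\ref{StrongMarkovLemma}) does the real work: it guarantees that $(\glob_{y_n i})_{i\in\I}$, as part of the subtree variables $\mathcal{A}_{y_n}$, is conditionally independent of everything outside that subtree given $\glob_{y_n}$, and the construction in Section~\ref{sec:ConstructionOfLeaf} was set up precisely to make the size-biased choice use only within-generation data. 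I would therefore organize the proof as: (i) identify the enlarged conditioning $\sigma$-algebra; (ii) invoke Corollary~\ref{GlobMarkovCorollary} to reduce the conditional law of the split of $\glob_{y_n}$ to depend only on $\glob_{y_n}$; (iii) observe the size-biased choice of $y_{n+1}$ uses only the just-produced $\fracrelat_{y_ni}$; (iv) conclude the one-step conditional law of $X_{n+1}$ depends on the past only through $X_n$, via an $n$-independent kernel. A short closing remark would record the kernel $\kernel$ for later use.
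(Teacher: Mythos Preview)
Your proposal is correct and follows the same approach as the paper, which simply observes that the construction in Section~\ref{sec:ConstructionOfLeaf} builds $\glob_{y_n}$ from $\glob_{y_{n-1}}$ alone via the same recipe at every level. The paper's proof is a two-sentence remark to this effect; your version spells out the bookkeeping (the enlarged $\sigma$-algebra, the invocation of Corollary~\ref{GlobMarkovCorollary}, the explicit kernel) that the paper leaves implicit, but the underlying idea is identical.
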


\begin{proof}
This is clear from the construction in Section~\ref{sec:ConstructionOfLeaf}. Indeed, when constructing $\glob _{y_n}$, only the value of $\glob _{y_{n-1}}$ is used, and the construction is the same on every level.
\end{proof}

The reason to construct in Section~\ref{sec:ConstructionOfLeaf} the entire tree of pairs $(\glob_x,\relat_x)$ step by step -- and not just the random path $\{y_n\}$ on an already existing tree -- was exactly to make the Markov property of $\glob_{y_n}$ obvious. A direct proof without the step-by-step construction would also not be hard, but according to our taste, the underlying phenomena are more transparent this way.

Based on this proposition and equation~(\ref{eq:decomposelogrelat}), the proof of our main results will be a reference to an appropriate ergodic theorem. However, there are two issues to deal with before. First, the state space of our Markov processes is continuous and even non-compact, so the unique existence of the invariant measure needs to be discussed. This is done in the next proposition. Second, the quantity $-\log \fracrelat_{y_n}$, of which we want to calculate the ergodic average, is not an observable on the state space of $X_n$, so this state space needs to be extended. This obvious extension will be done in Corollary~\ref{cor:Yirredmarkov}.

Before starting the main arguments, let us formulate, as a lemma, an easy observation about the distribution of $\glob$. We will use this in the arguments both for the uniqueness and the existence of the invariant measure of $X_n$.
From now on, we will use the notation $\R^+$ for the set of \emph{positive} real numbers: $$\R^+=(0,\infty).$$ It is important that $0$ is not included, e.g. when we speak of functions being continuous or nonzero on $\R^+$.

\begin{lemma}\label{lem:golb_density_nice}
 $\glob$ is absolutely continuous w.r.t. Lebesgue measure on $\R^+$, with a density function $\globdensity$ which is continuous and strictly positive on $\R^+$.
\end{lemma}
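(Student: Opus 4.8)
The plan is to exploit the fixed-point (renewal-type) structure of $\glob$ coming from the decomposition $\glob = \E^{-\malt\wait_1}\bigl(\glob_1 + \E^{-\malt\wait_2}\glob_2 + \cdots + \E^{-\malt(\wait_1+\cdots+\wait_{\childmax-1})}\glob_\childmax\bigr)$ of (\ref{eq:decomposetheta})/(\ref{eq:decomposethetaK=2}), where $\wait_1,\dots,\wait_\childmax$ are independent exponentials (with parameters $\weight(0),\dots,\weight(\childmax-1)$) and $\glob_1,\dots,\glob_\childmax$ are i.i.d.\ copies of $\glob$, independent of the $\wait_i$. The key point is that already the first summand, $\E^{-\malt\wait_1}\glob_1$, is absolutely continuous with a nice density, simply because $\wait_1$ has a smooth positive density on $\R^+$: conditioning on $\glob_1=g$, the variable $\E^{-\malt\wait_1}g$ takes values in $(0,g)$ with a density proportional to $s^{\weight(0)/\malt-1}$ there (a power law coming from the exponential distribution of $\wait_1$ through the change of variables $s=\E^{-\malt\wait_1}g$). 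Integrating over the law of $\glob_1$ gives that $\E^{-\malt\wait_1}\glob_1$ is absolutely continuous, and that its density is strictly positive on all of $\R^+$ (since $\glob_1>0$ a.s.\ and its essential supremum is $+\infty$, as $\expect\glob^2<\infty$ would otherwise force unbounded support only if... — more simply, $\glob_1$ has unbounded support because $\wait_i$ can be arbitrarily small, making $\glob_1$ arbitrarily large via its own decomposition; this elementary fact can be recorded separately). Adding the remaining independent nonnegative summands only convolves this density with more distributions, which preserves both absolute continuity and strict positivity on $\R^+$.

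Concretely, the steps I would carry out are: (i) Write $\glob \overset{d}{=} \E^{-\malt\wait_1}\glob_1 + Z$, where $Z\ge 0$ is independent of $(\wait_1,\glob_1)$ (here $Z$ collects the other $\childmax-1$ terms). (ii) Show $\E^{-\malt\wait_1}\glob_1$ has a density $f$ on $\R^+$ of the form $f(s) = \int_{(s,\infty)} \frac{\weight(0)}{\malt}\, s^{\weight(0)/\malt-1} g^{-\weight(0)/\malt}\, \prob_{\glob}(\D g)$ (from the explicit conditional density of $\E^{-\malt\wait_1}g$ and Fubini); note $f$ is finite a.e., continuous on $\R^+$ by dominated convergence once we know $\expect[\glob^{\,\weight(0)/\malt}]<\infty$ — which holds since $\weight(0)/\malt$ is finite and $\expect\glob^2<\infty$ covers the case $\weight(0)/\malt\le 2$, while for larger exponents one uses $\widehat\varrho(\malt)=1$ together with a moment bootstrap, or one simply truncates — and $f>0$ everywhere on $\R^+$ because the integral $\int_{(s,\infty)}\dots$ is over a set of positive $\glob$-measure for every $s>0$ (as $\glob$ has unbounded support). (iii) Conclude: the law of $\glob$ is the convolution of the absolutely continuous law with density $f$ and the law of $Z$, hence absolutely continuous with density $\globdensity(x) = \int f(x-z)\,\prob_Z(\D z)$; this inherits continuity (convolution of a continuous, suitably integrable function with a probability measure) and strict positivity on $\R^+$ (for any $x>0$ pick $z$ in the support of $Z$ with $z<x$ — e.g.\ $z=0$ if $0$ is in the support, which it is, since all $\wait_i$ can be large — and use $f(x-z)>0$ and positivity of $\prob_Z$ near that $z$).

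I would expect the main technical obstacle to be establishing the \emph{continuity} of $f$ (and hence of $\globdensity$) cleanly, since this requires a uniform integrability/domination argument for the integral defining $f$ near the endpoints, and in particular requires knowing a fractional moment $\expect[\glob^{\,\weight(0)/\malt}]<\infty$. The exponent $\weight(0)/\malt$ need not be $\le 2$, so (\ref{eq:var_glob_finite}) alone may not suffice; I would handle this either by a short bootstrap using the fixed-point equation and the definition of $\malt$ via $\widehat\varrho(\malt)=1$ (each application of the contraction gains integrability), or by invoking the known finiteness of all relevant moments of $\glob$ from \cite{rtv}/\cite{jagers}. A secondary, lesser obstacle is the elementary but necessary fact that $\glob$ has unbounded support (used for strict positivity); this follows because on the event that $\wait_1,\dots,\wait_\childmax$ are all very small, $\glob$ is close to $\glob_1+\cdots+\glob_\childmax\ge$ (sum of i.i.d.\ positive variables), and iterating shows $\operatorname{ess\,sup}\glob=\infty$ — alternatively it is immediate from $\expect\glob=\expect[\E^{-\malt\wait_1}](\expect\glob + \text{positive})$ being consistent only with a non-degenerate law, combined with the scaling self-similarity. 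Everything else is routine measure theory.
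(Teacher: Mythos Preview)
Your decomposition in step (i) contains an error that breaks the convolution argument in step (iii). You write $\glob \overset{d}{=} \E^{-\malt\wait_1}\glob_1 + Z$ and claim $Z$ is independent of $(\wait_1,\glob_1)$, but this is false: the ``other $\childmax-1$ terms'' are $\sum_{i=2}^{\childmax}\E^{-\malt(\wait_1+\cdots+\wait_i)}\glob_i$, and each of them carries the common factor $\E^{-\malt\wait_1}$. So $Z$ depends on $\wait_1$, the two summands are not independent, and the law of $\glob$ is \emph{not} the convolution of the law of $\E^{-\malt\wait_1}\glob_1$ with the law of $Z$. The paper avoids this by working with the multiplicative form you already wrote down in your opening sentence: $\glob = \E^{-\malt\wait_1}\widehat\glob$ with $\widehat\glob := \glob_1 + \E^{-\malt\wait_2}\glob_2 + \cdots$ independent of $\wait_1$ and a.s.\ positive. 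Absolute continuity and strict positivity of the density then come for free, since multiplying any positive random variable by an independent $\E^{-\malt\wait_1}$ (which has a positive density on $(0,1)$) gives a law equivalent to Lebesgue on $(0,\operatorname{ess\,sup}\widehat\glob)$; unboundedness of the support is argued essentially as you suggest. For continuity the paper then bootstraps: once $\glob$ is known to be absolutely continuous, so is $\widehat\glob$ (it is $\glob_1$ plus an independent nonnegative variable, a finite sum since $\childmax<\infty$), and the product $\E^{-\malt\wait_1}\widehat\glob$ of two independent absolutely continuous positive variables has a continuous density.

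A secondary point: your dominated-convergence argument for the continuity of $f$ is also not quite in order. The integrand in your formula for $f(s)$ involves $g^{-\weight(0)/\malt}$, not $g^{\weight(0)/\malt}$, so the positive moment $\expect[\glob^{\weight(0)/\malt}]$ you discuss is irrelevant; and in any case, the domain of integration $(s,\infty)$ moves with $s$, so continuity of $s\mapsto\int_{(s,\infty)} g^{-\weight(0)/\malt}\,\prob_{\glob}(\D g)$ at a point $s_0$ requires $\prob(\glob=s_0)=0$ --- i.e., that $\glob$ has no atom there, which is precisely part of what you are trying to prove. The bootstrap above sidesteps this circularity.
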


\begin{proof}
Start from the decomposition (\ref{eq:decomposetheta}). It shows that $\glob$ is of the form $\glob=\E ^{-\malt \wait _1} \widehat\glob$ where $\wait_1$ is independent of $\widehat\glob$, which immediately implies that $\glob$ must be equivalent to Lebesgue measure on the interval from zero to its maximal value. On the other hand, $\glob\ge e^{-\malt \wait_1} \glob_1 + e^{-\malt (\wait_1+\wait_2)} \glob_2$ implies that $\glob$ is not bounded, since $\glob_1$ and $\glob_2$ are independent and distributed as $\glob$, and their prefactors can be arbitrarily close to $1$.
The same decomposition, applied once again, also implies that the density $\globdensity$ is even a continuous function (more precisely, can be chosen to be continuous), since $\glob$ being absolutely continuous w.r.t. Lebesgue measure implies that so is $\widehat\glob$ (since $\childmax<\infty$), the density of which is once again smoothened by $\glob=\E ^{-\malt \wait _1} \widehat\glob$.
\end{proof}

For the discussion of the invariant measures, let $\kernel$ denote the transition kernel of $X_n$ -- that is, $\kernel(t)$ is the conditional distribution of $X_{n+1}$ under the condition $X_n=t$ (for every $t\in\R^+$). We also use it as the operator acting on measures by $\eta\kernel=\int_{\R^+}\kernel(t)\D \eta(t)$.

\begin{proposition}\label{prop:Xirred}
 The transition kernel $\kernel$ of the Markov process $X_n=\glob _{y_n}$ has exactly one invariant measure.
\end{proposition}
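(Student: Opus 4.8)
The plan is to prove existence and uniqueness separately, using Lemma~\ref{lem:golb_density_nice} in each part; throughout I understand ``invariant measure'' as invariant \emph{probability} measure (uniqueness up to scaling among $\sigma$-finite invariant measures then follows too, the chain turning out to be positive Harris recurrent).

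\emph{Uniqueness.} First I would read off the transition kernel $\kernel$ from the construction of Section~\ref{sec:ConstructionOfLeaf}: given $X_n=t$, one draws $(\wait_2,\glob_1,\glob_2)$ (writing the $\childmax=2$ case) from its conditional law given $\glob=t$ and sets $X_{n+1}=\glob_i$ with probability $\fracrelat_i$. The key observation is that $\kernel(t,\cdot)$ is absolutely continuous w.r.t.\ Lebesgue measure on $\R^+$, with a density strictly positive on all of $\R^+$, for \emph{every} $t\in\R^+$. Indeed, conditionally on $\glob=t$ the variable $\glob_1$ ranges over all of $\R^+$ (since $\glob_1\le\glob_1+\E^{-\malt\wait_2}\glob_2=\E^{\malt\wait_1}t$ with $\wait_1$ unbounded, while $\glob_1$ can be arbitrarily small), and its conditional density is positive, by a standard deconvolution of (\ref{eq:decomposethetaK=2}) using the continuous strictly positive density $\globdensity$ of $\glob_1,\glob_2$ from Lemma~\ref{lem:golb_density_nice} together with the positive smooth densities of $\wait_1,\wait_2$; moreover $\fracrelat_1>0$ a.s., and symmetrically for $\glob_2$. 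Hence $\kernel$ is $\Leb$-irreducible. Then uniqueness is classical: any invariant probability measure $\eta$ satisfies $\eta=\eta\kernel$, so it is absolutely continuous w.r.t.\ Lebesgue with density $s\mapsto\int p(t,s)\,\D\eta(t)>0$, i.e.\ equivalent to Lebesgue measure; but two distinct ergodic invariant measures, being extreme points of the invariant-measure simplex, would be mutually singular, which is impossible for measures equivalent to Lebesgue. (Over a Polish state space this rules out more than one invariant probability measure.)

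\emph{Existence.} I would exhibit the invariant measure explicitly as the size-biased law of $\glob$,
\[
\widehat\nu(\D t):=\frac{t\,\globdensity(t)}{\expect\glob}\,\D t,
\]
which is a probability measure by (\ref{eq:globexpectation}). To verify $\widehat\nu\kernel=\widehat\nu$, test against a bounded measurable $h$; by the form of $\kernel$ and the tower property,
\[
\int h\,\D(\widehat\nu\kernel)=\frac{1}{\expect\glob}\,\expect\Big[\glob\sum_{i=1}^{\childmax}\fracrelat_i\,h(\glob_i)\Big],
\]
the expectation referring to one construction step started from $\glob$ with its own (unbiased) law. Since $\birth_\emptyset=0$ we have $\fracrelat_i=\relat_i=\E^{-\malt\birth_i}\glob_i/\glob$, hence $\glob\,\fracrelat_i=\E^{-\malt\birth_i}\glob_i$; in the alternative construction of Remark~\ref{def:alternativeConstruction}, $\birth_i$ depends only on $\wait_1,\dots,\wait_i$ whereas $\glob_i$ depends only on the waiting times strictly below vertex $i$, so $\birth_i$ and $\glob_i$ are independent, and $\glob_i$ has the same law as $\glob$. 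Therefore
\[
\expect\Big[\glob\sum_{i=1}^{\childmax}\fracrelat_i h(\glob_i)\Big]=\sum_{i=1}^{\childmax}\expect\big[\E^{-\malt\birth_i}\big]\,\expect\big[\glob\,h(\glob)\big]=\lap(\malt)\,\expect[\glob\,h(\glob)]=\expect[\glob\,h(\glob)],
\]
using the definition (\ref{eq:laprho}) of $\lap$ and the defining relation $\lap(\malt)=1$ of the Malthusian parameter. Thus $\int h\,\D(\widehat\nu\kernel)=\frac{1}{\expect\glob}\expect[\glob h(\glob)]=\int h\,\D\widehat\nu$, so $\widehat\nu$ is invariant, and by the uniqueness part it is \emph{the} invariant probability measure.

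\emph{Main obstacle.} With the size-biased candidate available, existence is immediate, and the only delicate point becomes the measure-theoretic bookkeeping in the uniqueness step: verifying that $\kernel(t,\cdot)$ genuinely has an everywhere-positive Lebesgue density on the non-compact half-line, uniformly enough in $t$ to conclude $\Leb$-irreducibility. The non-compactness of the state space is exactly what makes existence nontrivial a priori — without guessing $\widehat\nu$ one would instead invoke a Krylov--Bogolyubov argument, and the real work would then be a Foster--Lyapunov / tightness estimate preventing $X_n$ from escaping towards $0$ or $\infty$.
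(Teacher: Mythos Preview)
Your uniqueness argument is essentially the paper's: both show that $\kernel(t,\cdot)$ is equivalent to Lebesgue on $\R^+$ for every $t$ (the paper defers the careful verification to Lemma~\ref{lem:kernel_function_continuity}, while you sketch the same deconvolution idea), deduce that any invariant probability measure is equivalent to Lebesgue, and then invoke the mutual singularity of distinct extremal invariant measures.

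Your existence argument, however, is genuinely different and more direct. You guess the invariant measure $\widehat\nu$ (the size-biased law of $\glob$) and verify $\widehat\nu\kernel=\widehat\nu$ by the one-line computation $\glob\,\fracrelat_i=\E^{-\malt\birth_i}\glob_i$, independence of $\birth_i$ and $\glob_i$, and the Malthusian identity $\lap(\malt)=1$. The paper instead proves existence by a Krylov--Bogolyubov-type route: it shows in Lemma~\ref{lem:markovlimit} that $X_n$ converges weakly (in fact to the very same size-biased law, identified explicitly in the proof), and in Lemma~\ref{lem:weak-continuity} that $\kernel$ is weakly continuous, so the weak limit must be invariant. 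Your route bypasses both of these lemmas (and the supporting Lemmas~\ref{lem:weaklaw}--\ref{lem:vertexprobtozero}) entirely for the purpose of this proposition. What the paper's longer route buys is the weak convergence of $X_n$ itself as a byproduct; what yours buys is brevity and a transparent reason \emph{why} the size-biased law is the stationary one, namely the identity $\lap(\malt)=1$.
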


\begin{proof}
Recall that the decomposition (\ref{eq:decomposetheta}) is the key relation between the $\glob_x$-es of the different generations, on which the construction of $X_n$ -- and thus every property of the transition kernel -- is based. 

The key observation is that $\kernel(t)$ is equivalent to Lebesgue measure (on $\R^+$, of course) for every $t\in\R^+$. This (and more) is explicitly stated and proven in Lemma~\ref{lem:kernel_function_continuity}. However, since we feel that this statement is really intuitive, let us give a rough reasoning here as well.

First, Lemma~\ref{lem:golb_density_nice} implies that the distribution of $\glob$ is equivalent to Lebesgue measure on $\R^+$. Recall now the construction in Section~\ref{sec:ConstructionOfLeaf}, the essence of which is that $\kernel(t)$ is the conditional distribution of $\glob'$ under the condition $\glob=t$, where $\glob'$ is a random choice from the set $\{\glob_1,\dots,\glob_\childmax\}$. Look again at the relation between $\glob$ and $\{\glob_1,\dots,\glob_\childmax\}$, which is the decomposition (\ref{eq:decomposetheta}), or the simplified form for $K=2$, which is (\ref{eq:decomposethetaK=2}). It shows that given any value of $t$, the condition $\glob=t$ doesn't rule out any of the possible values of a $\glob_i$ with $1\le i\le \childmax$. Also, the conditioning on $\glob=t$ doesn't spoil the absolute continuity of $\glob_i$, and the method of randomly choosing $\glob'$ from $\{\glob_1,\dots,\glob_\childmax\}$ also preserves absolute continuity. With this, the key observation is shown. Again, see Lemma~\ref{lem:kernel_function_continuity} for a detailed proof.

This observation about $\kernel(t)$ implies that for any measure $\eta$ on $\R^+$, the first iterate $\eta\kernel$ is already equivalent to Lebesgue measure. This in turn implies that any invariant measure $\eta=\eta\kernel$ is equivalent to Lebesgue measure, so any two invariant measures are equivalent.

Suppose now indirectly that there exist two different invariant probability measures. Then two different \emph{extremal} invariant probability measures also have to exist. But two different extremal invariant probability measures must be mutually singular, which contradicts the previous argument. Thus there is at most one invariant probability measure.

The existence follows from Lemma~\ref{lem:weak-continuity} and Lemma~\ref{lem:markovlimit}. Indeed, the limiting measure $\markovlimitmeasure$ of Lemma~\ref{lem:markovlimit} has to be invariant by Lemma~\ref{lem:weak-continuity}.
\end{proof}

\begin{lemma} \label{lem:markovlimit}
The sequence of random variables $X_n=\glob _{y_n}$ is weakly convergent to some measure $\markovlimitmeasure$ on $\R^+$.
\end{lemma}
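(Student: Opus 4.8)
The plan is to show that the sequence $(X_n)_{n\ge 0}$, with $X_n=\glob_{y_n}$, is tight, and that every weak subsequential limit is the same measure $\markovlimitmeasure$; tightness plus a unique limit point gives weak convergence. Since the transition kernel $\kernel$ has (by Proposition~\ref{prop:Xirred}) at most one invariant probability measure, and one checks (via the weak-Feller property, Lemma~\ref{lem:weak-continuity}) that any weak subsequential limit of the laws of $X_n$ -- or of their Cesàro averages $\frac1N\sum_{n<N}\mathcal L(X_n)$ -- is invariant, uniqueness of the limit point is automatic \emph{once tightness is established}. So the whole content of the lemma is tightness of $(X_n)$, i.e. the laws of $X_n$ do not leak mass to $0$ or to $\infty$ as $n\to\infty$.

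First I would control the upper tail. The natural device is the identity $\expect(\glob_{y_{n+1}}\mid \tree, y_n) = \sum_i \fracrelat_{y_n i}\,\glob_{y_n i}$; using $\fracrelat_{y_n i} = \E^{-\malt(\wait_{y_n1}+\dots+\wait_{y_ni})}\glob_{y_ni}/\glob_{y_n}$ this equals $\big(\sum_i \E^{-\malt(\wait_{y_n1}+\dots+\wait_{y_ni})}\glob_{y_ni}^2\big)/\glob_{y_n} = \big(\sum_i \fracrelat_{y_ni}\glob_{y_ni}^2\big)/\glob_{y_n}$, which unfortunately is \emph{not} bounded by a constant times $\glob_{y_n}$ in general. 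So a bare first-moment argument does not close; instead I would look for a Lyapunov (Foster--Lyapunov) function. The candidate is $V(t)=t^p$ for a small $p\in(0,1)$, or $V(t)=\log(1+t)+\log(1+1/t)$ to handle both ends at once. For the upper end, concavity of $t\mapsto t^p$ gives $\expect(X_{n+1}^p\mid X_n=t) \le \big(\expect(X_{n+1}\mid X_n=t)\big)^{?}$ -- more carefully, $\expect(X_{n+1}^p\mid X_n=t)=\expect\!\big[\sum_i \fracrelat_i(t)\,\glob_i^p\big]$ where $\fracrelat_i(t)$ is the size-biased weight given $\glob=t$; since $\sum_i\fracrelat_i(t)=1$ one gets $\expect(X_{n+1}^p\mid X_n=t)\le \expect\big[\max_i \glob_i^p\big]$, a \emph{constant} independent of $t$. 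That already gives $\sup_n\expect X_n^p<\infty$, hence no escape to $\infty$. For the lower end, I would use that $\glob_{y_n}\ge \E^{-\malt(\wait_{y_n1}+\dots+\wait_{y_n j})}\glob_{y_nj}$ for the child $j=y_{n+1}$ that is actually chosen, so $X_{n+1}\le \E^{\malt(\wait_{y_n1}+\dots)}X_n$, i.e. $X_{n+1}/X_n$ has the law of $\E^{\malt(\wait_1+\dots+\wait_J)}$ under the size-biased choice $J$; combined with $\expect X_n^{-q}$-type estimates (using Lemma~\ref{lem:golb_density_nice}, which gives a density of $\glob$ that is continuous and positive but not necessarily bounded near $0$, so some care with the exponent $q$ is needed) this keeps mass away from $0$.

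Having $\sup_n\expect V(X_n)<\infty$ for a $V$ with $V\to\infty$ at both $0$ and $\infty$, tightness follows by Markov's inequality; weak-Feller-ness (Lemma~\ref{lem:weak-continuity}) then promotes any subsequential weak limit to an invariant probability measure, and Proposition~\ref{prop:Xirred} forces all such limits to coincide with the single invariant $\markovlimitmeasure$. \textbf{The main obstacle} I anticipate is precisely the choice of the Lyapunov function: because the density of $\glob$ near $0$ is only known to be positive and continuous (Lemma~\ref{lem:golb_density_nice}) rather than bounded, and because the multiplicative drift $\E^{\malt\sum\wait}$ has heavy-ish behaviour, one must pick $p$ (and the companion negative exponent) small enough that all the relevant moments $\expect[\max_i\glob_i^p]$ and $\expect[\E^{\malt q\sum_{i\le J}\wait_i}]$ are finite -- here finiteness of $\expect\glob^2$ from \eqref{eq:var_glob_finite} and the explicit exponential law of the $\wait_x$ from Remark~\ref{def:alternativeConstruction} are exactly what make it work, but verifying the two-sided bound simultaneously is the delicate computational core of the argument.
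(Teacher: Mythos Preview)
Your route is quite different from the paper's and, as written, has two real gaps.

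First, a circularity: in the paper's logical order, the \emph{existence} half of Proposition~\ref{prop:Xirred} is deduced \emph{from} Lemma~\ref{lem:markovlimit} (together with the Feller property, Lemma~\ref{lem:weak-continuity}), so you cannot appeal to that proposition here. The uniqueness half is independent (it rests only on Lemma~\ref{lem:kernel_function_continuity}), so your plan survives if you restrict to that---but then your own tightness argument must itself furnish existence, and you should say so explicitly. Second, tightness plus Feller only shows that subsequential limits of the \emph{Ces\`aro means} $\frac1N\sum_{n<N}\mathcal L(X_n)$ are invariant; a subsequential weak limit of $\mathcal L(X_n)$ itself need not be (consider a periodic chain). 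Upgrading to $\mathcal L(X_n)\Rightarrow\nu$ requires aperiodicity and $\psi$-irreducibility plus a Meyn--Tweedie-type convergence theorem---doable here thanks to the strictly positive kernel density, but it is additional machinery you have not invoked. Your lower-tail Lyapunov step is also problematic as stated: the inequality $X_{n+1}\le \E^{\malt(\wait_{1}+\cdots+\wait_{J})}X_n$ bounds $X_{n+1}^{-q}$ from \emph{below}, not above, so it does not control $\expect[X_n^{-q}]$; and Lemma~\ref{lem:golb_density_nice} gives continuity of $\globdensity$ only on $(0,\infty)$, hence no information about $\globdensity$ near $0$, so finiteness of $\expect[\glob^{-q}]$ is not available.

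The paper avoids all of this by identifying the limit directly as the size-biased law of $\glob$, with density $t\,\globdensity(t)/\expect\glob$, bypassing Markov-chain asymptotics entirely. The key observation is that, conditionally on the level-$n$ birth times, the variables $\{\glob_x:|x|=n\}$ are i.i.d.\ copies of $\glob$, and $X_n=\glob_{y_n}$ is a size-biased sample from them with extra weights $p_x\propto \E^{-\malt\birth_x}$. Two elementary lemmas then finish the proof: size-biased sampling from i.i.d.\ variables with arbitrary extra weights converges weakly to the ordinary size-biased law as soon as $\max_x p_x\to 0$ (a weak-law-of-large-numbers computation with characteristic functions), and $\max_{|x|=n}p_x\to 0$ in probability (non-explosion for the numerator, a L\'evy upward-martingale argument for the denominator $\sum_{|x|=n}\E^{-\malt\birth_x}$). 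This yields $X_n\Rightarrow\nu$ in one shot, with the limiting density explicit---something your approach would not provide even if the gaps above were filled.
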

To keep our arguments easy to follow, we delay the proof to Section~\ref{sec:markovlimitproof}.

\begin{lemma} \label{lem:weak-continuity}
 $\kernel$ is continuous with respect to weak convergence of measures.
\end{lemma}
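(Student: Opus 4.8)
The plan is to deduce the weak continuity of $\kernel$ as an operator on probability measures from the \emph{Feller property} of $\kernel$ -- i.e.\ from the weak continuity of the map $t\mapsto\kernel(t)$ -- which in turn will follow from the continuity of the kernel density provided by Lemma~\ref{lem:kernel_function_continuity}.

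First I would establish the Feller property. By Lemma~\ref{lem:kernel_function_continuity}, for each $t\in\R^+$ the measure $\kernel(t)$ has a density $\kernel(t,\cdot)$ with respect to Lebesgue measure on $\R^+$, and $(t,s)\mapsto\kernel(t,s)$ is continuous (and strictly positive) on $\R^+\times\R^+$. Fix $t\in\R^+$ and a sequence $t_m\to t$ in $\R^+$. Then $\kernel(t_m,s)\to\kernel(t,s)$ for every $s\in\R^+$, and since all the $\kernel(t_m,\cdot)$ and $\kernel(t,\cdot)$ are probability densities on $\R^+$, Scheff\'e's lemma gives $\int_{\R^+}|\kernel(t_m,s)-\kernel(t,s)|\,\D s\to 0$; that is, $\kernel(t_m)\to\kernel(t)$ in total variation, in particular weakly. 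Hence $t\mapsto\kernel(t)$ is continuous with respect to weak convergence of measures.

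Second comes the standard duality step. Let $f\colon\R^+\to\R$ be bounded and continuous and put $(\kernel f)(t):=\int_{\R^+}f(s)\,\kernel(t)(\D s)$. Then $\kernel f$ is bounded by $\|f\|_\infty$, and it is continuous, since $|(\kernel f)(t_m)-(\kernel f)(t)|\le\|f\|_\infty\,\|\kernel(t_m)-\kernel(t)\|_{\mathrm{TV}}\to 0$ by the previous paragraph. Now if $\eta_m\to\eta$ weakly, then by Fubini's theorem and the definition $\eta\kernel=\int_{\R^+}\kernel(t)\,\D\eta(t)$,
\begin{equation*}
\int_{\R^+}f\,\D(\eta_m\kernel)=\int_{\R^+}(\kernel f)\,\D\eta_m\longrightarrow\int_{\R^+}(\kernel f)\,\D\eta=\int_{\R^+}f\,\D(\eta\kernel),
\end{equation*}
and since $f$ was an arbitrary bounded continuous function and all the measures involved are probability measures on $\R^+$, this says precisely that $\eta_m\kernel\to\eta\kernel$ weakly.

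The proof is short exactly because the substantive estimate -- that the kernel is absolutely continuous with a jointly continuous, strictly positive density -- is isolated in Lemma~\ref{lem:kernel_function_continuity}; the only points needing a little care here are that we work on the non-compact space $\R^+=(0,\infty)$ (so one must keep track that $\kernel(t)$, and hence $\eta\kernel$, never loses mass, which is automatic as these are probability measures) and the passage from pointwise convergence of densities to weak convergence, for which Scheff\'e's lemma is the clean tool. An alternative route would read off the Feller property directly from the size-biased construction in Section~\ref{sec:ConstructionOfLeaf} together with the decomposition (\ref{eq:decomposetheta}), but routing through the density supplied by Lemma~\ref{lem:kernel_function_continuity} is more transparent.
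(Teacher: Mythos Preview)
Your proof is correct and follows essentially the same route as the paper: both reduce the weak continuity of $\kernel$ to the Feller property $t\mapsto\kernel(t)$ via Lemma~\ref{lem:kernel_function_continuity}, and then run the standard duality argument with $\bar\phi(t)=(\kernel f)(t)$. The only difference is cosmetic: where you invoke Scheff\'e's lemma to get total variation convergence from pointwise convergence of densities, the paper uses a direct Fatou argument (Lemma~\ref{lem:mixed-continuity}) to obtain $K_{t_n}(B)\to K_t(B)$ for every Borel $B$; either tool suffices.
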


The proof is delayed to Section~\ref{sec:weak-continuity}.

\begin{corollary}\label{cor:Yirredmarkov}
The stochastic process $Y_n=(\glob _{y_n}, \fracrelat _{y_n})$ ($n=1,2,\dots$) is a homogeneous Markov process, for which the transition kernel has exactly one invariant measure.
\end{corollary}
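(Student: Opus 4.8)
The plan is to reduce everything to what has already been established for $X_n = \glob_{y_n}$. The Markov property of $Y_n = (\glob_{y_n}, \fracrelat_{y_n})$ is immediate from the step-by-step construction of Section~\ref{sec:ConstructionOfLeaf}: to produce the pair $(\glob_{y_{n+1}}, \fracrelat_{y_{n+1}})$ one splits $\glob_{y_n}$ into $(\wait_{y_n j}, \glob_{y_n j})_{j\in\I}$ according to the fixed conditional law (given $\glob_{y_n}$), then selects $y_{n+1}$ among the children of $y_n$ with probabilities $\fracrelat_{y_n j}$; the value $\fracrelat_{y_n}$ from the previous step plays no role, and the procedure is identical on every level. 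Hence the transition kernel of $Y_n$ — call it $\Ykernel$ — is well defined and $n$-independent, and moreover it factors: $\glob_{y_{n+1}}$ depends on the past only through $\glob_{y_n}$ and has law $\kernel(\glob_{y_n})$, while $\fracrelat_{y_{n+1}}$ is a deterministic function of the auxiliary split variables used at step $n+1$. In particular the $\glob$-marginal of $Y_n$ is exactly the Markov chain $X_n$.

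For uniqueness of the invariant measure, I would argue as follows. If $\tilde\nu$ is $\Ykernel$-invariant, then its $\glob$-marginal is $\kernel$-invariant, hence equals the unique invariant measure $\markovlimitmeasure$ of Proposition~\ref{prop:Xirred}. But $\Ykernel$ has the special structure that, conditionally on the value of $\glob_{y_{n+1}}$, the coordinate $\fracrelat_{y_{n+1}}$ has a fixed conditional distribution depending only on $\glob_{y_{n+1}}$ (this is visible from the construction: having drawn the split of $\glob_{y_n}$ and selected the child $y_{n+1}=y_n i$, the pair $(\glob_{y_{n+1}}, \fracrelat_{y_{n+1}})$ has a joint law determined by the splitting mechanism, and one can read off the conditional law of $\fracrelat$ given $\glob'$). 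Consequently $\tilde\nu$ is completely determined by its $\glob$-marginal $\markovlimitmeasure$ together with this fixed conditional kernel, so there is at most one invariant probability measure.

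Existence follows the same route as in Proposition~\ref{prop:Xirred}. By Lemma~\ref{lem:markovlimit}, $X_n = \glob_{y_n}$ converges weakly to $\markovlimitmeasure$; one then checks that $Y_n$ itself converges weakly, to the measure $\tilde\nu$ obtained by pushing $\markovlimitmeasure$ through one more splitting step and recording $(\glob', \fracrelat')$. Since $\Ykernel$ inherits weak-continuity from $\kernel$ (Lemma~\ref{lem:weak-continuity}) and from the fact that the splitting map $(\glob, \text{split variables}) \mapsto (\glob_i, \fracrelat_i)$ is continuous in $\glob$, the weak limit $\tilde\nu$ must be $\Ykernel$-invariant. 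I expect the only mildly delicate point to be the weak convergence of the pairs $Y_n$ (as opposed to the $\glob$-marginals alone): one has to see that the joint law of $(\glob_{y_n}, \fracrelat_{y_n})$ converges, which follows because $\fracrelat_{y_n}$ is a continuous function of $\glob_{y_{n-1}}$ and the independent auxiliary split/selection randomness, so weak convergence of $\glob_{y_{n-1}}$ plus the independence of the fresh randomness at each step propagates to the pair — everything else is a transcription of the arguments already given for $X_n$.
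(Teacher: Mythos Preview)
Your argument for the Markov property and for uniqueness is essentially the paper's: the key structural fact is that the transition from $(\glob_{y_n},\fracrelat_{y_n})$ to $(\glob_{y_{n+1}},\fracrelat_{y_{n+1}})$ uses only $\glob_{y_n}$, so $\tilde\eta\Ykernel$ depends only on the first marginal of $\tilde\eta$. Your phrasing of this via ``the conditional law of $\fracrelat$ given $\glob'$ is fixed'' is slightly off---that conditional law does depend on the input distribution---but your conclusion is right, since any invariant $\tilde\nu$ satisfies $\tilde\nu=\tilde\nu\Ykernel$ and the right-hand side is determined by the first marginal $\markovlimitmeasure$.

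For existence you take a detour that the paper avoids. You propose to prove weak convergence of the pairs $Y_n$ and weak continuity of $\Ykernel$, and you correctly flag the pair-convergence as the delicate step. The paper bypasses all of this with a one-line observation: since $\tilde\eta\Ykernel$ depends only on the first marginal of $\tilde\eta$, take \emph{any} $\hat\markovlimitmeasure$ with first marginal $\markovlimitmeasure$ and set $\tilde\markovlimitmeasure:=\hat\markovlimitmeasure\Ykernel$; then $\tilde\markovlimitmeasure$ has first marginal $\markovlimitmeasure\kernel=\markovlimitmeasure$, so $\tilde\markovlimitmeasure\Ykernel=\hat\markovlimitmeasure\Ykernel=\tilde\markovlimitmeasure$. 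No weak convergence of $Y_n$, no weak continuity of $\Ykernel$ is needed. Your route would work, but it requires reproving analogues of Lemmas~\ref{lem:markovlimit} and~\ref{lem:weak-continuity} for the two-dimensional process, which is genuine extra labour that the structural fact you already identified makes unnecessary.
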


\begin{proof}
Notice that during the construction of the tree in Section~\ref{sec:ConstructionOfLeaf}, $\fracrelat_{y_n}$ is constructed by using only the value of $\glob _{y_{n-1}}$ (not even $\fracrelat_{y_{n-1}}$), in a time-homogeneous way. Thus $Y_n$ is really homogeneous Markov.  Let $\Ykernel$ denote the transition kernel. From the construction, $\tilde\eta\Ykernel$ depends only on the first marginal of $\tilde\eta$, and on this marginal it acts exactly like $\kernel$. So for any measure $\hat\markovlimitmeasure$ with first marginal $\markovlimitmeasure$, $\tilde\markovlimitmeasure:=\hat\markovlimitmeasure\Ykernel$ is invariant by the invariance of $\markovlimitmeasure$ under $\kernel$. The uniqueness is obvious from the uniqueness of $\markovlimitmeasure$.
\end{proof}

Now we are ready to apply an ergodic theorem on the sequence $-\log\fracrelat _{y_n}$ to get the central technical result, from which our first two theorems easily follow.

\begin{corollary}
\label{cor:h_is_constant}
The limit $h:=-\lim_{n\to\infty}\frac{1}{n}\log \relat _{y_n}$ exists
and is constant with probability one.
\end{corollary}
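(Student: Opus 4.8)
The plan is to combine the decomposition $-\log\relat_{y_n}=\sum_{l=1}^n(-\log\fracrelat_{y_l})$ from (\ref{eq:decomposelogrelat}) with the Markov structure established in Corollary~\ref{cor:Yirredmarkov} and an ergodic theorem for Markov chains. First I would note that $-\log\fracrelat_{y_l}$ is a (measurable) function of $Y_l=(\glob_{y_l},\fracrelat_{y_l})$ — indeed, it is just $-\log$ of the second coordinate — so the Birkhoff-type average $\frac1n\sum_{l=1}^n(-\log\fracrelat_{y_l})$ is an ergodic average of an observable along the homogeneous Markov chain $Y_n$. By Corollary~\ref{cor:Yirredmarkov} this chain has a unique invariant probability measure $\tilde\markovlimitmeasure$; uniqueness of the invariant measure forces ergodicity of the chain with respect to $\tilde\markovlimitmeasure$ (an invariant set of positive, non-full measure would give rise to two distinct invariant measures by conditioning). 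Hence, provided the observable is integrable against $\tilde\markovlimitmeasure$, the ergodic theorem for Markov chains gives $\frac1n\sum_{l=1}^n(-\log\fracrelat_{y_l})\to h$ almost surely, where $h=\int(-\log r)\,\D\tilde\markovlimitmeasure(t,r)$ is a deterministic constant.

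There is one subtlety: the ergodic theorem in the form just invoked yields convergence for $\tilde\markovlimitmeasure$-almost every starting point, whereas our chain $Y_n$ is started from a specific (random) initial distribution determined by the tree construction — namely $Y_1$ has the law coming from Step~2 of the construction in Section~\ref{sec:ConstructionOfLeaf}, which need not be $\tilde\markovlimitmeasure$. To bridge this, I would use that the one-step kernel $\Ykernel$ sends \emph{any} initial law to one whose first marginal is equivalent to Lebesgue measure (this is the key observation in the proof of Proposition~\ref{prop:Xirred}, made precise in Lemma~\ref{lem:kernel_function_continuity}); since $\tilde\markovlimitmeasure$ is also equivalent to Lebesgue in its first marginal, the law of $Y_2$ is absolutely continuous with respect to $\tilde\markovlimitmeasure$. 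Absolute continuity of the initial law with respect to the (unique, ergodic) invariant measure is exactly what is needed to upgrade the $\tilde\markovlimitmeasure$-a.e. convergence to convergence from our actual initial law: the set of trajectories on which the average converges to $h$ is shift-invariant and has full $\tilde\markovlimitmeasure$-probability, hence full probability under any absolutely continuous initial law. Shifting the time index back by one (harmless for a Cesàro limit) then gives the claim for the chain as actually constructed.

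The constant value $h$ obtained this way is, tautologically, the same as the $h$ of Theorem~\ref{thm:entropy}: by Corollary~\ref{cor:entropy_with_Markov}, $H_n=\expect(-\log\relat_{y_n}\mid\tree)$, so $\frac1n H_n=\expect(\frac1n\sum_{l=1}^n(-\log\fracrelat_{y_l})\mid\tree)$, and bounded (or dominated) convergence together with the a.s. convergence just proved identifies $\lim_n\frac1n H_n$ with the same constant $h$ almost surely — which is why this corollary is flagged as the technical heart from which Theorems~\ref{thm:entropy} and~\ref{thm:dimension} follow.

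I expect the main obstacle to be \emph{integrability of the observable}: one must check $\int(-\log r)\,\D\tilde\markovlimitmeasure(t,r)<\infty$ (and that this integral is the right $h$), i.e. that $-\log\fracrelat_{y_n}$ has finite expectation in the stationary regime. The quantity $\fracrelat_{yi}=\E^{-\malt(\wait_{y1}+\dots+\wait_{yi})}\glob_{yi}/\glob_y$ can be small (making $-\log\fracrelat$ large) either because $\glob_y$ is large or because $\glob_{yi}$ is small or the waiting times are large; controlling the left tail of $\fracrelat_{y_n}$ therefore requires tail estimates on $\glob$ (from both sides) and on the exponential waiting-time factors. The moment bounds $\expect\glob<\infty$ and $\expect\glob^2<\infty$ from (\ref{eq:globexpectation})–(\ref{eq:var_glob_finite}), together with the density regularity from Lemma~\ref{lem:golb_density_nice}, should suffice, but assembling them into the required $L^1$ bound — uniformly enough to also legitimize the exchange of limit and conditional expectation in the previous paragraph — is the step that needs genuine work rather than bookkeeping.
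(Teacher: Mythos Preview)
Your proposal is essentially the paper's own approach: decompose via (\ref{eq:decomposelogrelat}), view $-\log\fracrelat_{y_l}$ as an observable on the chain $Y_l$, use the unique invariant measure from Corollary~\ref{cor:Yirredmarkov}, and pass from the stationary chain to the actual chain via absolute continuity of the one–step image. The paper's write-up differs only in that it cites Doob's theorem (invariant random variables of a stationary Markov chain are $\sigma(x_0)$–measurable) in place of Birkhoff, and then argues that the constant cannot depend on the initial point because $\Ykernel$ sends every point mass to a law equivalent to $\tilde\markovlimitmeasure$.

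Two comments on where your version deviates. First, the integrability issue you flag as ``the main obstacle'' is not an obstacle for the corollary as stated. Since $\fracrelat_{y_l}\in(0,1]$, the observable $-\log\fracrelat_{y_l}$ is nonnegative; for nonnegative observables the ergodic average exists in $[0,\infty]$ almost surely and equals the (possibly infinite) $\tilde\markovlimitmeasure$–integral, without any $L^1$ hypothesis. The corollary only asserts existence and constancy of the limit, not finiteness. The paper never checks integrability here; finiteness of $h$ is obtained only later, indirectly, from the explicit computation in Theorem~\ref{thm:explicit}. So your suggested programme of tail estimates on $\glob$ is unnecessary for this step.

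Second, your absolute–continuity step has a small gap: from ``the first marginal of the law of $Y_2$ is equivalent to the first marginal of $\tilde\markovlimitmeasure$'' you cannot directly conclude that the law of $Y_2$ is absolutely continuous with respect to $\tilde\markovlimitmeasure$ on the product space. What makes it work is the extra structural fact, noted in the proof of Corollary~\ref{cor:Yirredmarkov}, that $\tilde\eta\Ykernel$ depends only on the first marginal of $\tilde\eta$; combined with the equivalence of first marginals this gives equivalence (not just absolute continuity) of the full laws after one further step. You should make this explicit. The paper phrases the same point as ``$\Ykernel$ brings any measure into a measure equivalent with $\tilde\markovlimitmeasure$''.
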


\begin{proof}
$-\log\fracrelat _{y_n}$ is an observable on the state space of $Y_n$, and $h$ is exactly the ergodic average of this observable by 
(\ref{eq:decomposelogrelat}). So it is guaranteed to be constant by the unique existence of the invariant measure and Theorem 1.1 in Chapter X of \cite{Doob53}. We give the details of the (standard) argument now.

Theorem 1.1 in Chapter X of \cite{Doob53} states that ``If $\{x_n,n\ge 0\}$ is a stationary Markov process, and if $z$ is an invariant random variable, then $z$ is measurable on the sample space of $x_0$''. To formally apply this theorem to our process, we first need to construct a stationary version of $Y_n$. Namely, let $\tilde Y_n$ be the Markov process with generator $\Ykernel$ started from $\tilde Y_0$ which is distributed according to the unique invariant measure $\tilde\nu$. For this process, the ergodic average of an observable, being an invariant random variable (see \cite{Doob53}, Chapter X for the definition), is by the above theorem measurable on the state space -- that is, constant with probability one, conditioned on the initial value (more precisely, for $\tilde\nu$-a.e.\,initial value). But in our case, this constant is indeed independent of the initial value -- actually, it is constant for \emph{every} initial value, since $\Ykernel$ brings any measure (e.g.\,a point measure concentrated on any point) into a measure equivalent with $\tilde\nu$. Now notice that the property that the ergodic average is the same constant with probability one, independently of the initial state, is a property of the transition kernel $\Ykernel$ only (and not of $\tilde Y_n$ as a stochastic process), so it also holds for the process $Y_n$.
\end{proof}

Remember that $\frac{1}{n}H_n$ is a conditional expectation of $-\frac{1}{n}\log \relat _{y_n}$ by Corollary~\ref{cor:entropy_with_Markov}. So since we have just shown the almost sure convergence of $-\frac{1}{n}\log \relat _{y_n}$, the almost sure convergence of $\frac{1}{n}H_n$ follows, if we have e.g. dominated convergence. This will be guaranteed by the following lemma.

\begin{lemma} 
\label{lem:domkonv}
Let $\tmpmeasure$ be any Borel probability measure on $\leaves$, with $\childmax<\infty$. Using
the notation in Section~\ref{sec:measureAsLimiting}, for every $x \in \leaves$ let
\begin{equation*}
f_n(x) := -\frac{1}{n} \log \tmpmeasure ( \cylinder {\leaves}{x|_n} ).
\end{equation*}
Then $\bar{f} := \sup _n f_n$ is integrable with respect to the measure $\tmpmeasure$.
\end{lemma}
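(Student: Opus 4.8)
The plan is to prove a uniform exponential tail estimate for $\bar f$ with respect to $\tmpmeasure$; integrability then follows at once from the layer-cake identity $\expect_{\tmpmeasure}\bar f=\int_0^\infty\tmpmeasure(\bar f>\lambda)\D\lambda$. First I would dispose of a measurability triviality. The set $N$ of $x\in\leaves$ such that $\tmpmeasure(\cylinder{\leaves}{\crop{x}{n}})=0$ for some $n$ is a countable union of $\tmpmeasure$-null cylinders, so $\tmpmeasure(N)=0$, and off $N$ every $f_n(x)$ is a well-defined nonnegative number (nonnegative because $\tmpmeasure$ is a probability measure, so each $\tmpmeasure(\cylinder{\leaves}{\crop{x}{n}})\le 1$). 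Hence $\bar f\ge 0$ is defined $\tmpmeasure$-almost everywhere, and it suffices to bound $\tmpmeasure(\bar f>\lambda)$ for large $\lambda$.

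The heart of the argument is a Markov-type bound at each fixed level $n$. For $\lambda>0$ the event $\{f_n>\lambda\}$ is precisely the union of those level-$n$ cylinders $\cylinder{\leaves}{w}$, $w\in\I^n$, with $\tmpmeasure(\cylinder{\leaves}{w})<\E^{-n\lambda}$; since there are at most $\childmax^n$ cylinders at level $n$, summing their masses gives
\begin{equation*}
\tmpmeasure(f_n>\lambda)\le\childmax^n\,\E^{-n\lambda}=\E^{-n(\lambda-\log\childmax)}.
\end{equation*}
I would then take a crude union bound over $n$: for every $\lambda>\log\childmax$,
\begin{equation*}
\tmpmeasure(\bar f>\lambda)\le\sum_{n\ge 1}\E^{-n(\lambda-\log\childmax)}=\frac{1}{\E^{\lambda-\log\childmax}-1},
\end{equation*}
which for $\lambda\ge\log\childmax+1$ (where $\E^{\lambda-\log\childmax}\ge\E\ge 2$) is at most $2\childmax\,\E^{-\lambda}$ — a genuine exponential tail.

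Integrating this, and using the trivial bound $\tmpmeasure(\bar f>\lambda)\le 1$ on the bounded interval $\lambda\le\log\childmax+1$, gives $\expect_{\tmpmeasure}\bar f\le\log\childmax+1+2/\E<\infty$, which is the claim. I do not expect a real obstacle: the only delicate point is that the union bound over $n$ degenerates near $\lambda=\log\childmax$ (the geometric series diverges there), but on that finite range one simply invokes that a probability is at most $1$. It is worth stressing that the hypothesis $\childmax<\infty$ is used in an essential way — precisely through the bound $\childmax^n$ on the number of level-$n$ cylinders — and the statement genuinely fails for infinitely branching trees; this is the reason the lemma is stated the way it is. (This is the standard maximal-function lemma underlying proofs of the Shannon–McMillan–Breiman theorem, specialized to the finitely branching symbolic setting.)
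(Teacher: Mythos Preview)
Your proof is correct and follows essentially the same approach as the paper: the same level-$n$ Markov-type bound $\tmpmeasure(f_n>\lambda)\le\childmax^n\E^{-n\lambda}$, the same union bound over $n$ summed as a geometric series, and the same exponential tail for $\bar f$ integrated via layer-cake (the paper discretizes this last step over integer thresholds instead). Your additional measurability remark and the explicit handling of the threshold region near $\lambda=\log\childmax$ are minor polish not present in the paper, but the argument is the same.
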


The proof is delayed to Section~\ref{sec:dominated}. Now we are ready to prove the main results of the paper.

\begin{proof}[Proof of Theorem \ref{thm:entropy}]
For every $x\in\leaves$ let $f_n(x)=-\frac{1}{n} \log \measure_n(\{x|_n\})=-\frac{1}{n} \log \measure ( \cylinder {\leaves}{x|_n} )$.
By Lemma~\ref{lem:randomleaf_measure}, Corollary~\ref{cor:h_is_constant} states exactly that for almost every realization of the tree, $f_n(x)$ converges $\measure$-almost surely to $h$.

Now divide the statement of Corollary~\ref{cor:entropy_with_Markov} by $n$ to get
\begin{equation*}
\frac{1}{n}H_n = \expect \left(-\frac{1}{n} \log \relat _{y_n} | \tree \right)
= \int_{\{\bar{x}\in \vertices:|x|=n\}} -\frac{1}{n} \log ( \measure _n (\{\bar{x}\})) \D \measure _n (\bar{x}) =
\int_{\leaves} f_n (x) \D \measure (x).
\end{equation*}
We can now apply the dominated convergence theorem to finish the proof, since we can use
the supremum as an integrable dominating function, see Lemma~\ref{lem:domkonv}.
\end{proof}

\begin{proof}[Proof of Theorem \ref{thm:dimension}]
We first show the second statement of the theorem by showing that the local dimension of $\measure$ at the leaf $\lim_n y_n$ is exactly $\frac{h}{-\log\contr}$ where $h$ is from Corollary~\ref{cor:h_is_constant}.
Let $B(x,r)$ denote the $r$-neighbourhood of the point $x\in \leaves$ w.r.t. the metric (\ref{eq:metric}). For $r=\contr^n$, this neighbourhood is formed exactly by the descendants of $x|_n$, so $B(x,\contr^n)=\leaves(x|_n)$. The $\measure$-measure of this set is
\begin{equation*}
\measure(B(x,\contr^n))= \measure ( \leaves(x|_n) ) = \measure _n (\{x|_n\}) = \log \relat _{x|_n},
\end{equation*}
while the logarithm of the diameter of this set is $n\log\contr$.
Thus the local dimension of $\measure$ at the leaf $x$ is
\begin{equation*}
\dim_\mathrm{loc}\measure(x) = \lim _{n\to\infty} \frac{\measure(B(x,\contr^n))}{n\log\contr} =
\lim _{n\to\infty} \frac{-\frac{1}{n}\log \relat _{x|_n}}{-\log \contr} 
\end{equation*}
(if this limit exists), by the definition in (\ref{eq:lowerloc}) and (\ref{eq:upperloc}).

Applying that to $x=\lim_n y_n$, Lemma~\ref{lem:randomleaf_measure} and Corollary~\ref{cor:h_is_constant} say that this limit indeed exists and is equal to $\frac{h}{-\log\contr}$ for $\measure$-almost every $x$, which is what we wanted to show.

The first statement of the theorem in now an immediate consequence of the definitions of the Hausdorff and packing dimension of a measure in (\ref{eq:dimH}) and (\ref{eq:dimP}).

\end{proof}

\section{Proofs of Auxiliary Lemmas}
\label{sec:proofs}

\subsection{The Lemma for Dominated Convergence of the Entropies}
\label{sec:dominated}

In this section we prove Lemma \ref{lem:domkonv}.

\begin{proof}[Proof of Lemma \ref{lem:domkonv}]

For arbitrary $M<\infty$, let us define the set
\begin{equation*}
F _M ^{(n)} := \{ x: f _n(x) \ge M \} =
\{ x: -\frac{1}{n} \log \tmpmeasure ( \cylinder {\leaves}{x|_n} ) \ge M \} =
\{ x: \tmpmeasure ( \cylinder {\leaves}{x|_n} ) \le e^{-nM} \}.
\end{equation*}

Since $f_n$ takes constant values on the $\childmax ^n$ cylinder sets, we have
\begin{equation}
\label{eq:smallerthanexp}
\tmpmeasure(F _M ^{(n)}) \le \childmax^n e ^{-nM} = \left( \childmax e^{-M} \right) ^n.
\end{equation}

Now we define
\begin{equation*}
F _M := \{ x: \bar{f}(x) > M \} =
\bigcup _n \{ x: f _n(x) > M \} \subseteq \bigcup _n F _M ^{(n)}.
\end{equation*}

By (\ref{eq:smallerthanexp}), for $M > \log (2\childmax)$,
\begin{equation*}
\tmpmeasure (F _M) \le \sum _{n=1} ^{\infty} \left( \childmax e^{-M} \right) ^n <
2\childmax e^{-M}.
\end{equation*}

Thus, since $\bar {f} \ge 0$,
\begin{equation*}
\int \bar{f} (x) \D \tmpmeasure (x) < 
\sum _{M=1} ^{\infty} M \tmpmeasure ( \{ x: M-1 \le \bar{f} (x) < M \} ) < \infty.
\end{equation*}

\end{proof}

\subsection{Limiting Distribution of $\glob_{y_n}$ Along the Random Path} 
\label{sec:markovlimitproof}

In this section we prove Lemma \ref{lem:markovlimit}.
We begin with three lemmas of elementary probability whose statements do not rely on the setting of the paper.

The first one is a trivial generalization of the ordinary weak law of large numbers.
We could call it ``Weak law of large numbers with arbitrary weights''.
For this purpose, we will consider a sequence of probability vectors
$\{\underline{p}^n\}_{n=1}^\infty$, where, again, each $\underline{p}^n$ is a
probability vector $\underline{p}^n=(p_1^n,p_2^n,\dots,p_{N_n}^n)$. We plan to calculate
weighted averages of independent random variables with weight vectors $\underline{p}^n$.
We expect such an average to be close to the expectation, if every term has a sufficiently
small weight. So we will say that the sequence $\{\underline{p}^n\}_{n=1}^\infty$ is proper if
$$\lim_{n\to\infty}\max\{p_j^n:1\le j\le N_n\}=0$$.

\begin{lemma} \label{lem:weaklaw}
Let $\nu_0$ be a probability distribution on $\R$ with finite expectation $m$.
Let $\{\underline{p}^n\}_{n=1}^\infty$ be a proper sequence of weight vectors, and
let $\nu_n$ be the distribution of $$\sum_{j=1}^{N_n}p_j^n Z_j$$ where
$Z_1,Z_2,\dots, Z_{N_n}$ are independent random variables with distribution $\nu_0$.
Then $$\nu_n\Rightarrow m.$$
\end{lemma}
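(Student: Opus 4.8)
The plan is to pass to characteristic functions and invoke the continuity theorem. Let $\varphi$ denote the characteristic function of $\nu_0$. Since $\nu_0$ has finite first moment $m$, $\varphi$ is differentiable at $0$ with $\varphi'(0)=im$, so $\varphi(s)=1+ims+o(s)$ as $s\to0$; combining this with $e^{ims}=1+ims+O(s^2)$ we may write $\varphi(s)=e^{ims}+s\,\delta(s)$ with $\delta(0):=0$ and $\delta(s)\to0$ as $s\to0$. By independence of the $Z_j$, the characteristic function of $\sum_{j=1}^{N_n}p_j^n Z_j$ is $\psi_n(t)=\prod_{j=1}^{N_n}\varphi(p_j^n t)$, while the characteristic function of the point mass at $m$ is $e^{imt}$. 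So it suffices to show $\psi_n(t)\to e^{imt}$ for every fixed $t\in\R$, and the claim then follows from the Lévy continuity theorem.

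Fix $t$. Because the sequence of weight vectors is proper, $p_j^n|t|\le|t|\max_k p_k^n\to0$ uniformly in $j$ as $n\to\infty$. Using $|\varphi|\le1$ and $|e^{imp_j^n t}|=1$, together with the elementary telescoping inequality $\bigl|\prod_j a_j-\prod_j b_j\bigr|\le\sum_j|a_j-b_j|$ valid whenever all $|a_j|\le1$ and $|b_j|\le1$, and the fact that $\sum_j p_j^n=1$, we get
\begin{equation*}
\bigl|\psi_n(t)-e^{imt}\bigr|=\Bigl|\prod_{j}\varphi(p_j^n t)-\prod_{j}e^{imp_j^n t}\Bigr|\le\sum_{j}\bigl|\varphi(p_j^n t)-e^{imp_j^n t}\bigr|=\sum_{j}p_j^n|t|\,\bigl|\delta(p_j^n t)\bigr|\le|t|\max_{k}\bigl|\delta(p_k^n t)\bigr|.
\end{equation*}
Since $p_k^n t\to0$ uniformly in $k$ and $\delta(s)\to0$ as $s\to0$, the right-hand side tends to $0$, hence $\psi_n(t)\to e^{imt}$, which completes the proof.

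The only point requiring care is that we assume merely a finite first moment and not a finite variance, so a direct second-moment (Chebyshev) estimate on $\sum_j p_j^n Z_j$ is not available; the characteristic-function route avoids this because it needs only differentiability of $\varphi$ at the origin. (An alternative, slightly longer, elementary proof truncates $\nu_0$ at a level $K$, estimates the bounded part by Chebyshev — its variance is $\le K^2\max_k p_k^n\to0$ — and controls the tail part by Markov's inequality using $\expect|Z_j\ind_{|Z_j|>K}|\to0$, then lets $K\to\infty$.) The "proper" hypothesis is exactly what makes the per-term error uniform in $j$ and therefore harmless when summed against the probability weights $p_j^n$; beyond that, there is no real obstacle.
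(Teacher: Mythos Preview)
Your proof is correct and follows exactly the approach the paper indicates: the paper's own proof is the single sentence ``The proof is trivial following the standard proof of the weak law with characteristic functions,'' and you have supplied precisely that standard argument in detail. There is nothing to add.
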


Note that this is the usual weak law if $p_j^n=\frac{1}{n}$ ($j=1,\dots,n$).

\begin{proof}
 The proof is trivial following the standard proof of the weak law with characteristic functions.
\end{proof}

Now we turn to a lemma which could be called ``size-biased sampling with arbitrary extra weights''.
For this purpose, let $\underline{p}=(p_1,p_2,\dots,p_N)$ be a probability vector, and let
$Z_1,Z_2,\dots ,Z_N$ be random variables on $\R^+$ (meaning $\prob(Z_j>0)=1$). We will say that the random variable $V$ is
the size-biased random choice from $Z_1,Z_2,\dots, Z_N$ with extra weights $p_1,p_2,\dots,p_N$,
if it is constructed the following way:
\begin{enumerate}
 \item Generate a realization of $(Z_1,Z_2,\dots, Z_N)$, and call it $(z_1,z_2,\dots, z_N)$.
 \item Having that, choose a random integer $J$ from the index set $\{1,2,\dots,N\}$ with the weight
 $$\frac{p_j z_j}{\sum_{j=1}^N p_j z_j}$$ given to each $j$.
 \item Set $V=z_J$.
\end{enumerate}
Note that this is the usual size-biased random choice if all the $p_j$ are equal. Our lemma states that this size-biased random choice with extra weights behaves just like the ordinary one, provided that every weight is small.

To state the lemma, let $\nu_0$ be a probability distribution on $\R^+$ with finite expectation $m$.
We will say that the measure $\nu$ is the size-biased version of $\nu_0$, if it is absolutely continuous with respect to $\nu_0$, and the density is $\rho(t)=\frac{1}{m}t$. In other words, $\nu(A)=\frac{1}{m}\int_A t \D\nu_0(t)$.

\begin{lemma} \label{lem:sizebiased}
Let $\nu_0$ be a probability distribution on $\R^+$ with finite expectation $m$.
Let $\{\underline{p}^n\}_{n=1}^\infty$ be a proper sequence of weight vectors, and (for
each $n$) let $Z_1^n,Z_2^n,\dots, Z_{N_n}^n$ be independent random variables with distribution $\nu_0$.
Let $V_n$ be the random choice from $Z_1^n,Z_2^n,\dots, Z_{N_n}^n$ with extra weights $p_1^n,p_2^n,\dots,p_{N_n}^n$. Let $\nu$ be the size-biased version of $\nu_0$.
Then
$$V_n\Rightarrow \nu.$$
\end{lemma}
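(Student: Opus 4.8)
The plan is to reduce the size-biased choice to an ordinary weighted average by conditioning on the realized values, and then apply Lemma~\ref{lem:weaklaw}. Fix a bounded continuous test function $g$ on $\R^+$; I want to show $\expect g(V_n)\to \int g\D\nu = \frac{1}{m}\int t\, g(t)\D\nu_0(t)$. By the construction of $V_n$, conditioning on $(Z_1^n,\dots,Z_{N_n}^n)=(z_1,\dots,z_{N_n})$ gives
\begin{equation*}
\expect\bigl(g(V_n)\mid Z_1^n=z_1,\dots\bigr) = \frac{\sum_{j=1}^{N_n} p_j^n z_j\, g(z_j)}{\sum_{j=1}^{N_n} p_j^n z_j}.
\end{equation*}
So $\expect g(V_n) = \expect\bigl[ A_n/B_n \bigr]$ where $A_n=\sum_j p_j^n Z_j^n g(Z_j^n)$ and $B_n=\sum_j p_j^n Z_j^n$. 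Each of $A_n$ and $B_n$ is a weighted sum of i.i.d.\ terms of the form required by Lemma~\ref{lem:weaklaw} (the summand $Z_j^n g(Z_j^n)$ has distribution the pushforward of $\nu_0$ under $t\mapsto t g(t)$, with finite expectation since $g$ is bounded and $\int t\D\nu_0=m<\infty$; the summand $Z_j^n$ has distribution $\nu_0$ with expectation $m$). Hence, by the ``weak law with arbitrary weights'',
\begin{equation*}
B_n \Rightarrow m, \qquad A_n \Rightarrow \textstyle\int t\, g(t)\D\nu_0(t) =: m\,\bar g,
\end{equation*}
and since the limits are constants, the pair $(A_n,B_n)$ converges jointly in distribution to the constant $(m\bar g, m)$. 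The map $(a,b)\mapsto a/b$ is continuous at $(m\bar g,m)$ (recall $m>0$ because $\nu_0$ is supported on $\R^+$ with finite positive expectation), so $A_n/B_n \Rightarrow \bar g = \frac{1}{m}\int t\, g(t)\D\nu_0(t) = \int g\D\nu$.

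It remains to pass from convergence in distribution of $A_n/B_n$ to convergence of its expectation, i.e.\ to justify $\expect[A_n/B_n]\to \int g\D\nu$. For this I would use that $A_n/B_n = \expect(g(V_n)\mid \cdot)$ is a bounded random variable: $|A_n/B_n|\le \sup|g|$ uniformly in $n$, because it is a convex combination of the values $g(z_j)$. Convergence in distribution together with a uniform bound gives convergence of expectations (bounded convergence / the portmanteau theorem applied to the bounded continuous identity on a bounded interval), so $\expect[A_n/B_n]\to \frac{1}{m}\int t g(t)\D\nu_0(t) = \int g\D\nu$. Since $g$ was an arbitrary bounded continuous function, this is exactly $V_n\Rightarrow\nu$.

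The one point needing a little care -- and the mild obstacle in the argument -- is the division step: $a/b$ is only continuous where $b\neq 0$, so one must know that the limiting value of $B_n$ is genuinely positive and not $0$. This is guaranteed by $m=\int t\D\nu_0(t)>0$, which holds because $\nu_0$ is a probability measure on $\R^+=(0,\infty)$. (If one prefers to avoid invoking joint convergence to a constant, one can instead apply Slutsky's theorem: $B_n\Rightarrow m$ with $m$ constant implies $1/B_n\Rightarrow 1/m$, and $A_n\Rightarrow m\bar g$, so $A_n\cdot(1/B_n)\Rightarrow \bar g$.) Everything else is routine: the reduction by conditioning, the two applications of Lemma~\ref{lem:weaklaw}, and bounded convergence for the expectations.
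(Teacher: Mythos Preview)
Your proof is correct and follows essentially the same route as the paper's: both condition on the $Z_j^n$ to write the relevant quantity as a ratio of weighted sums, apply Lemma~\ref{lem:weaklaw} to numerator and denominator separately, use $m>0$ and continuous mapping for the quotient, and then invoke boundedness of the ratio to pass to expectations. The only cosmetic difference is that the paper uses indicator test functions $g=\ind_{(-\infty,t]}$ (i.e.\ works with CDFs) rather than general bounded continuous $g$.
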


\begin{proof}
 Let $F$ denote the cumulative distribution function of $\nu$, that is, $F(t)=\nu([0,t])$.
 Let $F_n$ denote the cumulative distribution function of $V_n$. For some fixed $t$, we write it in the form
 \begin{equation} \label{eq:completeprob}
  F_n(t)=\expect(\prob(V_n\le t\mid \{Z_j^n\}_{j=1}^{N_n})).
 \end{equation}
 The conditional probability inside is just the
 weight of $j$-s with $Z_j\le t$, so
 $$\prob(V_n\le t\mid \{Z_j^n\}_{j=1}^{N_n})=\frac{\sum_{j=1}^{N_n}p_j^n Z_j^n \ind(Z_j^n\le t)}{\sum_{j=1}^{N_n}p_j^n Z_j^n}.$$
 According to Lemma~\ref{lem:weaklaw} the denominator converges weakly (and thus, in probability) to $\expect(Z_1^n)=m>0$ as $n\to\infty$. Similarly, the numerator converges in probability to $$\expect(Z_1^n \ind(Z_1^n\le t))=\int_{\R^+}\tilde{t}\ind(\tilde{t}\le t)\D\nu_0(\tilde{t})=m \nu([0,t]).$$
This implies that the quotient converges weakly to $\nu([0,t])=F(t)$. Since this quotient is a conditional probability, it is obviously bounded by $1$, so (\ref{eq:completeprob}) implies that $F_n(t)\to F(t)$.
\end{proof}

The following lemma is just a re-statement of the previous one. This is the form that we will use.
\begin{lemma} \label{lem:sizebiasedeps}
 Let $\nu_0$ be a probability distribution on $\R^+$ with finite expectation, and let $\nu$ be its size-biased version. Let $\probe$ be a bounded continuous function on $\R^+$. Then for every $\eps>0$ there exists a $\delta>0$ such that for any probability vector $(p_1,p_2,\dots,p_N)$ which satisfies that
$$\max\{p_j : 1\le j\le N\}\le \delta,$$ if $Z_1,Z_2,\dots, Z_N$ are independent with distribution $\nu_0$, then the size-biased random choice (called $V$) from $Z_1,Z_2,\dots, Z_N$ with extra weights $p_1,p_2,\dots,p_N$ satisfies
$$|\expect(\probe(V))-\int \probe(t) \D\nu(t)|<\eps.$$
\end{lemma}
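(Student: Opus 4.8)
The plan is to treat this lemma exactly as its phrasing suggests, as a ``uniform in the weight vector'' reformulation of Lemma~\ref{lem:sizebiased}, and to obtain it by the standard sequential (contrapositive) argument. The only input beyond Lemma~\ref{lem:sizebiased} is the remark that the weak convergence $V_n\Rightarrow\nu$ of Borel probability measures on $\R^+$ means, by definition, that $\expect(g(V_n))\to\int g\D\nu$ for every bounded continuous $g:\R^+\to\R$ — in particular for the given function $\probe$.

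Concretely, I would argue by contradiction. Suppose the statement fails for some $\eps_0>0$. Then for every $n\in\N$ there is a probability vector $\underline p^n=(p_1^n,\dots,p_{N_n}^n)$ with $\max_j p_j^n\le 1/n$ such that, with $Z_1^n,\dots,Z_{N_n}^n$ independent of distribution $\nu_0$ and $V_n$ the size-biased choice from them with extra weights $\underline p^n$, one has $|\expect(\probe(V_n))-\int\probe\D\nu|\ge\eps_0$. Since $\max_j p_j^n\le 1/n\to 0$, the sequence $\{\underline p^n\}_{n=1}^\infty$ is proper in the sense preceding Lemma~\ref{lem:weaklaw}, so Lemma~\ref{lem:sizebiased} applies and yields $V_n\Rightarrow\nu$. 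By the remark above this forces $\expect(\probe(V_n))\to\int\probe\D\nu$, contradicting $|\expect(\probe(V_n))-\int\probe\D\nu|\ge\eps_0$ for all $n$. Hence the claimed $\delta$ exists.

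I expect the one point that deserves care — rather than a genuine obstacle — to be the remark itself: because the state space $\R^+=(0,\infty)$ is non-compact and excludes $0$, one should make sure that $V_n\Rightarrow\nu$ tests against \emph{all} bounded continuous functions on $\R^+$, not merely those extending continuously to $[0,\infty)$. This follows from what the proof of Lemma~\ref{lem:sizebiased} actually establishes, namely pointwise convergence of the cumulative distribution functions at every $t>0$, together with the fact that $\nu$ is an honest probability measure on $(0,\infty)$ so that no mass escapes to $0$ or to $\infty$: given $\eps>0$ one picks continuity points $0<a<b$ of the limit with $\nu((a,b])>1-\eps$ and, for $n$ large, $V_n((a,b])>1-2\eps$, and on the compact interval $[a,b]$ uniform continuity of $\probe$ together with convergence of the (restricted) distribution functions reduces the estimate to ordinary weak convergence, the tails contributing at most a multiple of $\eps\,\|\probe\|_\infty$. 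Letting $\eps\to0$ gives the remark, and the rest of the argument is routine.
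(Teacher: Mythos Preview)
Your argument is correct and is exactly the standard way to pass from the sequential statement of Lemma~\ref{lem:sizebiased} to the uniform one here; the paper itself gives no proof, introducing the lemma only as ``just a re-statement of the previous one,'' so you are supplying precisely the details it leaves implicit. The care you take over $\R^+=(0,\infty)$ versus $[0,\infty)$ is legitimate but could be said more briefly: since $\nu$ is an honest probability measure on $(0,\infty)$ and the proof of Lemma~\ref{lem:sizebiased} gives $F_n(t)\to F(t)$ for every $t>0$, one has weak convergence of probability measures on the Polish space $(0,\infty)$, and Portmanteau then tests against all bounded continuous $\probe$ on $(0,\infty)$ automatically.
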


Before proving Lemma~\ref{lem:markovlimit}, we need one more tiny statement about the structure of the growing tree.

\begin{lemma} \label{lem:vertexprobtozero}
 For any vertex $x\in\vertices$, let 
\begin{equation}
\label{eq:vertexweight}
\vertexweight_x=e^{-\malt \birth_x},
\end{equation}

and for every $x$ with $|x|=n$ let $$\vertexprob_x=\frac{\vertexweight_x}{\sum_{|y|=n} \vertexweight_y}.$$
Then the sequence $\vertexprob^{n,\text{max}}:=\max\{\vertexprob_x : |x|=n\}$ converges to zero in probability.
\end{lemma}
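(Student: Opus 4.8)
The plan is to reduce the claim to two one-step moment computations, by means of the elementary bound $(\max_j q_j)^2\le\sum_j q_j^2$, valid for any probability vector $(q_1,\dots,q_N)$. Applied to the probability vector $(\vertexprob_x)_{|x|=n}$, and writing $W_n:=\sum_{|y|=n}\vertexweight_y$ for the normalizing sum (so that $\vertexprob_x=\vertexweight_x/W_n$), this gives
\[
\bigl(\vertexprob^{n,\text{max}}\bigr)^2\;\le\;\sum_{|x|=n}\vertexprob_x^2\;=\;\frac{\sum_{|x|=n}\vertexweight_x^2}{W_n^2}\,.
\]
Hence it suffices to prove (i) that $\sum_{|x|=n}\vertexweight_x^2\to 0$ in probability, and (ii) that the $W_n$ stay bounded away from $0$ uniformly in $n$, in the sense that for every $\delta>0$ there is $\eta>0$ with $\prob(\inf_n W_n\le\eta)\le\delta$.

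For (i) I would use the alternative construction of Remark~\ref{def:alternativeConstruction}. For a fixed $x$ with $|x|=n$, the quantity $\sum_{i=1}^{\childmax}\E^{-2\malt(\birth_{xi}-\birth_x)}$ involves only the exponential waiting times strictly below $x$, so it is independent of $\{\birth_y:|y|\le n\}$ and has the same law as $\sum_{i=1}^{\childmax}\E^{-2\malt\birth_i}$, whose expectation equals $\lap(2\malt)$ by the definition (\ref{eq:laprho}). Summing $\vertexweight_x^2=\E^{-2\malt\birth_x}$ over $|x|=n+1$ and using this independence gives the recursion $\expect\sum_{|x|=n}\vertexweight_x^2=\lap(2\malt)^n$, the base case being $\vertexweight_\emptyset=1$. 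Now each factor $\weight(i)/(\lambda+\weight(i))$ in (\ref{eq:laprho}) is strictly decreasing in $\lambda>0$, so $\lap$ is finite, positive and strictly decreasing on $(0,\infty)$; as $\lap(\malt)=1$, this forces $0<\lap(2\malt)<1$. Hence $\expect\sum_{|x|=n}\vertexweight_x^2\to 0$, and (i) follows from Markov's inequality.

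For (ii), the same one-step computation with exponent $\malt$ instead of $2\malt$, together with $\lap(\malt)=1$, shows that $(W_n)_{n\ge 0}$ is a nonnegative martingale with respect to the filtration $\mathcal F_n$ generated by the waiting times of the first $n$ generations, with $W_0=\vertexweight_\emptyset=1$. Therefore $1/W_n$ is a nonnegative supermartingale by conditional Jensen, and Doob's maximal inequality yields $\prob(\inf_n W_n\le\eta)=\prob(\sup_n 1/W_n\ge 1/\eta)\le\eta\,\expect(1/W_0)=\eta$ for every $\eta>0$, which is exactly (ii). (Alternatively, iterating (\ref{eq:decomposetheta}) gives $\glob_\emptyset=\sum_{|x|=n}\vertexweight_x\glob_x$ with the $\glob_x$ i.i.d.\ copies of $\glob$ independent of $\mathcal F_n$, so $W_n=\expect(\glob_\emptyset\mid\mathcal F_n)/\expect\glob$ converges a.s.\ to $\glob_\emptyset/\expect\glob>0$ by Lévy's theorem, which also forces $\inf_n W_n>0$ a.s.) Combining (i) and (ii): given $\eps,\delta>0$, pick $\eta=\delta/2$; then by (i) and Markov's inequality $\prob(\sum_{|x|=n}\vertexweight_x^2>\eps^2\eta^2)\le\delta/2$ for all large $n$, so
\[
\prob\bigl(\vertexprob^{n,\text{max}}\ge\eps\bigr)\;\le\;\prob\bigl(\inf_m W_m\le\eta\bigr)+\prob\Bigl(\sum_{|x|=n}\vertexweight_x^2>\eps^2\eta^2\Bigr)\;\le\;\eta+\frac{\delta}{2}\;=\;\delta\,,
\]
which proves the lemma.

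The only point that goes beyond first-moment bookkeeping is the uniform lower bound (ii) on the normalizing sums $W_n$; without it the ratio in the first display could fail to be small even when its numerator vanishes. I expect this to be the main (and fairly mild) obstacle, and the supermartingale maximal inequality is the cleanest way to settle it; note that it is really the a.s.\ positivity of the branching limit $\glob$ that is being used, through the normalization $W_0=1$.
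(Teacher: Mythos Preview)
Your overall plan is sound and the argument for (i) is correct and clean: the recursion $\expect\sum_{|x|=n}\vertexweight_x^2=\lap(2\malt)^n$ together with $\lap(2\malt)<1$ gives $L^1$-convergence of the numerator to zero. This is a different (and arguably neater) route than the paper's, which handles the numerator $\max_{|x|=n}\vertexweight_x$ directly via the non-explosion of the tree.

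There is, however, a genuine error in your primary argument for (ii). Since $x\mapsto 1/x$ is \emph{convex} on $(0,\infty)$, conditional Jensen gives
\[
\expect\!\left[\frac{1}{W_{n+1}}\,\Big|\,\mathcal F_n\right]\;\ge\;\frac{1}{\expect[W_{n+1}\mid\mathcal F_n]}\;=\;\frac{1}{W_n},
\]
so $1/W_n$ is a \emph{sub}martingale, not a supermartingale. Doob's supermartingale maximal inequality therefore does not apply, and the bound $\prob(\inf_n W_n\le\eta)\le\eta$ is not justified. (A submartingale version would require a uniform bound on $\expect[1/W_n]$, which you have not established and which is not obvious.)

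Fortunately your parenthetical alternative is correct and suffices: iterating (\ref{eq:decomposetheta}) gives $\glob_\emptyset=\sum_{|x|=n}\vertexweight_x\glob_x$ with the $\glob_x$ i.i.d.\ and independent of $\mathcal F_n$, so $W_n=\expect(\glob_\emptyset\mid\mathcal F_n)/\expect\glob\to\glob_\emptyset/\expect\glob>0$ a.s.\ by L\'evy's upward theorem, whence $\inf_n W_n>0$ a.s.\ and (ii) follows. This is exactly how the paper handles the denominator. With this correction your proof goes through; the remaining difference from the paper is your second-moment treatment of the numerator in place of the paper's non-explosion argument.
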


\begin{proof}
 We prove the stronger statement that $\vertexprob^{n,\text{max}}$ converges to zero with probability one. We use the form
 \begin{equation} \label{eq:pmaxasfraction}
  \vertexprob^{n,\text{max}}=\frac{\max\{\vertexweight_x : |x|=n\}}{\sum_{|y|=n} \vertexweight_y}.
 \end{equation}

We show that the numerator converges to zero with probability one, while the denominator converges to a positive limit with probability one.
\begin{enumerate}
 \item If the numerator does not converge to zero, then there is some $\eps>0$ and there are infinitely many vertices $x\in\vertices$ with $\vertexweight_x>\eps$. Then, for all these $x$ we have $\birth_x<\birth^*:=\frac{-\log\eps}{\malt}$, so infinitely many vertices are born within the finite time $\birth^*$. This is known to have probability zero -- see comment at (\ref{eq:globexpectation}).

 \item Iterating the decomposition of $\glob$, we get
 \begin{equation} \label{eq:globdecompiterate}
  \glob=\sum_{|x|=n}\vertexweight_x \glob_x.
 \end{equation}

Let $\birthtimes_n$ denote the $\sigma$-algebra generated by $\{\wait_x : x\in\vertices, |x|\le n\}$ -- that is, the complete history of the tree growth up to the $n$-th level. Similarly, let $\birthtimes$ denote the $\sigma$-algebra generated by $\{\wait_x : x\in\vertices\}$. Clearly $\birthtimes_n\subset\birthtimes_{n+1}$, $\birthtimes$ is generated by $\cup_n \birthtimes_n$, and $\glob$ is $\birthtimes$-measurable. So L\'evy's `upward' theorem ensures that $\expect(\glob\mid\birthtimes_n)\to \glob$ with probability one. However, if $|x|=n$, then $\glob_x$ is independent of $\birthtimes_n$, while $\vertexweight_x$ is $\birthtimes_n$-measurable, so (\ref{eq:globdecompiterate}) implies that
$$\expect(\glob\mid\birthtimes_n)=\sum_{|x|=n}\vertexweight_x \expect \glob_x=\expect \glob \sum_{|x|=n}\vertexweight_x,$$
so with probability one the denominator of (\ref{eq:pmaxasfraction}) converges to
$\frac{\glob}{\expect \glob}\ne 0$.
\end{enumerate}

\end{proof}

\def \iexp {\xi}                 
\def \sumexp {\Xi}               
\def \bern {B}                   
\def \sumbern {S}                

Now we can complete the goal of this subsection:

\begin{proof}[Proof of Lemma \ref{lem:markovlimit}]

Actually we give the limit explicitly. Let $\markovlimitmeasure$ be the measure on $\R^+$ with density function $cx\globdensity(x)$, where $\globdensity(x)$ is the density of $\glob$, and $c=\frac{1}{\expect\glob}$ is a normalizing constant. We will show that
\begin{equation} \label{eq:markovlimit}
 X_n\Rightarrow \markovlimitmeasure.
\end{equation}
Let us look directly at $X_n=\glob _{y_n}$ for some fixed $n$. This can also be constructed in the following way:
 \begin{enumerate}
 \item Generate the birth times $\birth_x$ for all vertices $x$ with $|x|=n$ (that is, on the $n$-th level of the tree). This defines the values $\vertexweight_x=e^{-\malt\birth_x}$, $|x|=n$. For better transparency, let us normalize these values to get a probability distribution on the $n$-th level of the tree: $\vertexprob_x:=\frac{\vertexweight_x}{\sum_{|z|=n}\vertexweight_z}$ (for $|x|=n$).
 \item Also generate the random variables $\glob _x$ for $|x|=n$, which are independent of the $\vertexprob_x$.
 \item Now $y_n$ is chosen from the points $|x|=n$ according to the distribution $\measure_n$, so the weight given to some $x$ is
 $$\frac{\relat_x}{\sum_{|z|=n}\relat_z}=\frac{\vertexweight_x \glob_x}{\sum_{|z|=n}\vertexweight_z \glob_z}=\frac{\vertexprob_x \glob_x}{\sum_{|z|=n}\vertexprob_z \glob_z}.$$
 \end{enumerate}
 So, having the values $\vertexprob_x$ fixed, the value $X_n=\glob _{y_n}$ is the result of a size-biased sampling from the independent random variables $\glob_x$, $|x|=n$, with additional weights $\vertexprob_x$ -- just like in the context of Lemma~\ref{lem:sizebiased} and Lemma~\ref{lem:sizebiasedeps}.

 Now we can prove (\ref{eq:markovlimit}). Let $\probe$ be a fixed bounded continuous function on $\R^+$,  let $\bound$ be an upper bound of $|\probe|$, and let $\expprobe=\int_{\R^+}\probe\D\markovlimitmeasure$ (which satisfies $|\expprobe |\le \bound $). Let $\eps>0$ be arbitrary.

Choose $\delta>0$ according to Lemma~\ref{lem:sizebiasedeps} so that if all the $\vertexprob_x$ on some level $|x|=n$ are at most $\delta$, then
\begin{equation*} 
\big| \expect \left( \probe (X_n)\mid \{\vertexprob _x\} \right )
-\expprobe \big|<\eps.
\end{equation*}

 Lemma~\ref{lem:vertexprobtozero} implies that there exists an $n_0$ such that for all $n>n_0$, $$\prob(\max\{\vertexprob_x: |x|=n\}>\delta)<\frac{\eps}{2\bound}.$$ Let $\event_{n,\delta}$ denote the event that $\max\{\vertexprob_x: |x|=n\}\le\delta$. For $n>n_0$ we get
 \begin{eqnarray*}
\big|\expect\left(\probe(X_n)\right)-\expprobe\big| \le
\int\big|\expect\left(\probe(X_n)-\expprobe\mid \{\vertexprob_x\}\right)\big|\D\prob =  \\
= \int_{\event_{n,\delta}^c}\big|\expect\left(\probe(X_n)-\expprobe\mid \{\vertexprob_x\}\right)\big|\D\prob + 
\int_{\event_{n,\delta}}\big|\expect\left(\probe(X_n)-\expprobe\mid \{\vertexprob_x\}\right)\big|\D\prob \le \\
\le 2 \bound \prob\left(\event_{n,\delta}^c\right) + \int_{\event_{n,\delta}}\eps\D\prob \le\eps+\eps=2\eps.
\end{eqnarray*}

\end{proof}

\subsection{Weak Continuity of the Transition Kernel} \label{sec:weak-continuity}

This section is devoted to the proof of Lemma~\ref{lem:weak-continuity}.

\begin{proof}[Proof of Lemma~\ref{lem:weak-continuity}]
 We first show in Lemma~\ref{lem:kernel_function_continuity} that the transition kernel $\kernel$ can be written as $(\eta\kernel)(B)=\int_{\R^+}\int_B k(t,s) \D s \D \eta(t)$ where the kernel function $k(t,s)$ is continuous in the first variable (actually it is continuous in both variables). Lemma~\ref{lem:weak-continuity-general} -- which is a pure probability statement -- says that such a kernel is continuous with respect to weak convergence of measures.
\end{proof}

In the lemma, we show a little more than what is needed for the above proof. In particular, we also show that the kernel function $k(t,s)$ is nowhere zero on $\R^+ \times \R^+$, because this is used in the proof of Proposition~\ref{prop:Xirred}.

\begin{lemma}\label{lem:kernel_function_continuity}
 The transition kernel $\kernel$ can be written as $(\eta\kernel)(B)=\int_{\R^+}\int_B k(t,s) \D s \D t$ where
the kernel function $k(t,s)$ is continuous in both variables (in its domain $(t,s)\in \R^+ \times \R^+$), and
strictly positive.
\end{lemma}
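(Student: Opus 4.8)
The plan is to produce $k$ explicitly. By the construction of Section~\ref{sec:ConstructionOfLeaf}, $\kernel(t)$ is the conditional law of $\glob_{y_1}$ given $\glob=\glob_\emptyset=t$, where $\glob_{y_1}$ is one of $\glob_1,\dots,\glob_\childmax$ chosen with the probabilities $\fracrelat_i$ (this is the description already used in the proof of Proposition~\ref{prop:Xirred}). I would first show that the pair $(\glob,\glob_{y_1})$ has a joint density $g$ with respect to two-dimensional Lebesgue measure, compute $g$ explicitly, and then set $k(t,s):=g(t,s)/\globdensity(t)$ --- legitimate since, by Lemma~\ref{lem:golb_density_nice}, $\globdensity$ is strictly positive on $\R^+$. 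Since $\int g(t,s)\D s=\globdensity(t)$, this $k$ is then a kernel function for $\kernel$ with $\int k(t,s)\D s=1$, and it remains only to check that $k$ is continuous and strictly positive on $\R^+\times\R^+$ --- which, because $\globdensity$ is continuous and strictly positive (Lemma~\ref{lem:golb_density_nice}), reduces to showing that $g$ is continuous and strictly positive there.

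I would obtain $g$ by a change of variables, carrying out $\childmax=2$ for transparency (the general case has the same structure, only with more auxiliary variables). Put $u=e^{-\malt\wait_1}$ and $v=e^{-\malt\wait_2}$: these are independent, supported on $(0,1)$, with densities proportional to $u^{\weight(0)/\malt-1}$ and $v^{\weight(1)/\malt-1}$ (positive exponents, since $\weight>0$ and $\malt>0$), and independent of $\glob_1,\glob_2$, which are independent copies of $\glob$ with density $\globdensity$. Then $\glob=u(\glob_1+v\glob_2)$, and $\glob_{y_1}=\glob_1$ with conditional probability $\fracrelat_1=\glob_1/(\glob_1+v\glob_2)$, $\glob_{y_1}=\glob_2$ with $\fracrelat_2=v\glob_2/(\glob_1+v\glob_2)$. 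Splitting on the value of $y_1$ yields $g=g_1+g_2$; for $g_1$ one sets $s=\glob_1$, $t=\glob=u(s+v\glob_2)$, solves $u=t/(s+v\glob_2)$, and changes variables from $(u,\glob_1)$ to $(t,s)$ with $v\in(0,1)$ and $b:=\glob_2\in\R^+$ as remaining integration variables. The Jacobian of this change is the ``total-weight factor'' $s+vb=t/u>0$, and the requirement $u\in(0,1)$ becomes $\ind[s+vb>t]$, giving
\begin{equation*}
 g_1(t,s)=C\,t^{\weight(0)/\malt-1}\,s\,\globdensity(s)\int_0^1 v^{\weight(1)/\malt-1}\int_0^\infty \globdensity(b)\,(s+vb)^{-\weight(0)/\malt-1}\,\ind[s+vb>t]\,\D b\,\D v
\end{equation*}
for a positive constant $C=C(\weight,\malt)$; $g_2$ has the same shape, with one extra factor $v$ inside the integral coming from $\fracrelat_2$.

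Continuity of $g$ then follows by dominated convergence applied to this double integral. For fixed $(v,b)$ the integrand is continuous in $(t,s)$ except on the set $\{s+vb=t\}$, which is Lebesgue-null, so the indicator does no harm; and for $(t,s)$ restricted to any region with $s\ge s_0>0$ one has $(s+vb)^{-\weight(0)/\malt-1}\le s_0^{-\weight(0)/\malt-1}$ (as $s+vb\ge s$), so the integrand is dominated by $\globdensity(b)\,v^{\weight(1)/\malt-1}\,s_0^{-\weight(0)/\malt-1}$, which is integrable over $(0,1)\times\R^+$ because $\globdensity$ is a probability density and $\weight(1)/\malt>0$. Hence the double integral is continuous in $(t,s)$, and so is $g_1$, the remaining factors $t^{\weight(0)/\malt-1}$ and $s\,\globdensity(s)$ being continuous; likewise for $g_2$, $g$, and $k=g/\globdensity$. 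Strict positivity is then immediate: every prefactor is strictly positive on $\R^+\times\R^+$ (using $\globdensity>0$ from Lemma~\ref{lem:golb_density_nice}), and the integrand $\globdensity(b)\,v^{\weight(1)/\malt-1}(s+vb)^{-\weight(0)/\malt-1}$ is strictly positive whenever $s+vb>t$, which holds for each $v\in(0,1)$ once $b$ is large enough; so the double integral is positive, whence $g(t,s)\ge g_1(t,s)>0$, and $k(t,s)>0$.

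The only step that demands real care is the bookkeeping of the change of variables in the general case $\childmax\ge 2$: tracking the $\childmax$ factors $e^{-\malt\wait_i}$ and the $\childmax-1$ un-chosen copies of $\glob$, computing the Jacobian (again a positive ``total-weight'' quantity), and checking, just as for $\childmax=2$, that the constraint regions produced by requiring the leading factor to lie in $(0,1)$ are Lebesgue-null on their boundary and of positive measure. None of this is a genuine obstacle --- the structure is exactly that of the $\childmax=2$ computation, and Lemma~\ref{lem:golb_density_nice} supplies precisely the continuity and strict positivity of $\globdensity$ that make both the domination and the positivity argument go through.
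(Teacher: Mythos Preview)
Your proposal is correct and follows essentially the same route as the paper: write $k(t,s)=g(t,s)/\globdensity(t)$ with $g$ the joint density of $(\glob,\glob_{y_1})$, compute $g$ by splitting on the value of $y_1$ and changing variables $(u,\glob_i)\to(t,s)$ with the remaining variables integrated out, then get continuity by dominated convergence and positivity by inspecting the support. The paper does the same, with slightly different bookkeeping (it keeps the factor $u\,g_1(u)$ intact rather than pulling out $t^{\weight(0)/\malt-1}$).

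One small caution: your claim that ``$g_2$ has the same shape, with one extra factor $v$'' glosses over a genuine asymmetry. For $g_1$ the key factor is $(s+vb)^{-\weight(0)/\malt-1}$, whereas for $g_2$ (where $s=\glob_2$ and you integrate over $a=\glob_1$) it becomes $(a+vs)^{-\weight(0)/\malt-1}$. Your domination bound $(s+vb)^{-\alpha}\le s_0^{-\alpha}$ used $s+vb\ge s$, which has no direct analogue for $a+vs$ (since $a$ and $v$ can both be small). The fix is easy---either use the indicator constraint $a+vs>t\ge t_0/2$ to bound the factor uniformly near $(t_0,s_0)$, or, as the paper does, refrain from pulling out the power of $t$ and instead bound the combination $\frac{t}{a+vs}\,g_1\!\left(\frac{t}{a+vs}\right)$, which is uniformly bounded since $u\mapsto u\,g_1(u)$ is bounded on $(0,1)$. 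With that adjustment your argument is complete.
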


\begin{proof}
For the time of the proof, let $\glob$ and $\glob'$ denote two consecutive values of the process, say $\glob:=X_n=\glob_{y_n}$, $\glob'=X_{n+1}=\glob_{y_{n+1}}$.
So the kernel function $k(t,s)$ is just the conditional density of $\glob'$ (as a function of $s$), under the condition $\glob=t$. So $$k(t,s)=\frac{\ggpdensity(t,s)}{\globdensity(t)},$$ where $\ggpdensity(t,s)$ is the joint density of the pair $(\glob,\glob')$, and $\globdensity(t)$ is its first marginal -- that is, the density of $\glob$.

We know from Lemma~\ref{lem:golb_density_nice} that $\glob$ is indeed absolutely continuous w.r.t. Lebesgue measure, and the density $\globdensity$ is continuous and nonzero on $\R^+$. Knowing this, we now show that $\ggpdensity(t,s)$ is also continuous in both variables and nonzero on $\R^+ \times \R^+$, which completes the proof.

We restrict to the case $K=2$. The case of a general $K<\infty$ causes no additional difficulty other than messy notation.
Following the construction of the tree in Section~\ref{sec:ConstructionOfLeaf}, we start with $\wait_1,\wait_2,\glob_1,\glob_2$ independent, with $\wait_i$ being exponentially distributed with parameter $\weight(i-1)/\malt$ and $\glob_i$ being distributed as $\glob$ ($i=1,2$). We introduce the temporary notation $S_i=\E ^{-\malt \wait _i}$ and denote its density by $g_i$. Explicit calculation gives that
\begin{equation}\label{eq:Sdensity}
g_i(u)=\frac{\weight(i-1)}{\malt} u^{\frac{\weight(i-1)}{\malt}-1} \ind_{(0,1)}(u),
\end{equation}
of which we will only use that $u\,g_1(u)$ is bounded.

Denote the joint density of $(S_1,S_2,\glob_1,\glob_2)$ by
$$f(u_1,u_2,t_1,t_2)=g_1(u_1) g_2(u_2) \globdensity(t_1) \globdensity(t_2).$$
We define
$$\glob=S_1 \glob_1 + S_1 S_2 \glob_2 = S_1 (\glob_1 + S_2 \glob_2).$$
To get the appropriate joint distributions, in the probability vector $(S_1,S_2,\glob_1,\glob_2)$
we replace $S_1$ by $\glob$, so let us denote the joint density of $(\glob,S_2,\glob_1,\glob_2)$ by $\tilde{f}$.
The density transformation formula gives
$$\tilde{f}(t,u_2,t_1,t_2)=\frac{1}{t_1+u_2 t_2} f(\frac{t}{t_1+u_2 t_2},u_2,t_1,t_2)=
\frac{1}{t} \frac{t}{t_1+u_2 t_2} g_1(\frac{t}{t_1+u_2 t_2}) g_2(u_2) \globdensity(t_1) \globdensity(t_2).$$
According to the construction, $\glob'$ is chosen to be either $\glob_1$ or $\glob_2$, with conditional probabilities (given $(S_2,\glob_1,\glob_2)$ and conditionally independently of $\glob$)
$$\prob(\glob'=\glob_1|S_2,\glob_1,\glob_2)=\frac{\glob_1}{\glob_1+S_2 \glob_2},$$
$$\prob(\glob'=\glob_2|S_2,\glob_1,\glob_2)=\frac{S_2 \glob_2}{\glob_1+S_2 \glob_2}.$$
So the joint density of $(\glob,\glob')$ is
\begin{eqnarray} \label{eq:ggpdensity_integral}
\ggpdensity(t,s)&=&\iint_{\R^2}\frac{t_1}{t_1+u_2 t_2}\tilde{f}(t,u_2,s,t_2)\D t_2 \D u_2+
\iint_{\R^2}\frac{u_2 t_2}{t_1+u_2 t_2}\tilde{f}(t,u_2,t_1,s)\D t_1 \D u_2= \nn \\
&=&\iint_{\R^2}\bar{f}_1(t,s,u_2,t_2)\D t_2 \D u_2+
\iint_{\R^2}\bar{f}_2(t,s,u_2,t_1)\D t_1 \D u_2
\end{eqnarray}

All there is left is to show that both integrals on the right hand side are continuous and nonzero for $(t,s)\in \R^+ \times \R^+$.
Now the integrands $\bar{f}_1$ and $\bar{f}_2$ are not exactly continuous, but they are continuous \emph{on their supports}. 
\footnote{\label{ftn:ggpdensity_integrand_support}The supports of the two integrands are actually not the same. Both of them are characterized by the system of inequalities $\{0<t_1,t_2$; $0<u_2<1$; $0<\frac{t}{t_1+u_2 t_2}<1\}$, but with the choice $s=t_1$ or $s=t_2$, respectively.}
On the other hand, for every $(t,s)\in \R^+ \times \R^+$ the support of each integrand is a nice set (described in the footnote) with a boundary of Lebesgue measure zero. That is, for \emph{every} $(t_0,s_0)\in \R^+ \times \R^+$,
$$\bar{f}_1(t,s,u_2,t_2)\stackrel{(t,s)\to (t_0,s_0)}{\longrightarrow}\bar{f}_1(t_0,s_0,u_2,t_2)
\text{ for Lebesgue-a.e. $(u_2,t_2)\in \R^2$}.$$
To get the desired continuity of the first integral by the Lebesgue dominated convergence theorem, we only need to find an integrable (in $(u_2,t_2)$) uniform (in $(t,s)$ near $(t_0,s_0)$) upper bound for
$$\bar{f}_1(t,s,u_2,t_2)=\frac{s}{s+u_2 t_2} \frac{1}{t} \frac{t}{s+u_2 t_2} g_1(\frac{t}{s+u_2 t_2}) g_2(u_2) \globdensity(s) \globdensity(t_2).$$
The first factor is at most $1$, and the product
$\frac{t}{s+u_2 t_2} g_1(\frac{t}{s+u_2 t_2})$ is bounded because $u\,g_1(u)$ is bounded due to (\ref{eq:Sdensity}). So we have
$$\bar{f}_1(t,s,u_2,t_2)\le C \frac{1}{t} \globdensity(s) g_2(u_2) \globdensity(t_2)
\le C (\frac{1}{t_0}+1) (\globdensity(s)+1) g_2(u_2) \globdensity(t_2)$$
if $(t,s)$ is close enough to $(t_0,s_0)$, since $\frac{1}{t}\globdensity(s)$ is continuous in $(t_0,s_0)$.
This upper bound is clearly integrable in $(u_2,t_2)$, so the dominated convergence theorem ensures that the integral is also continuous.

The second integral in (\ref{eq:ggpdensity_integral}) can be shown to be continuous in exactly the same way. Thus the continuity of $k(t,s)$ is proven.

To get that $\ggpdensity(t,s)$ (and thus $k(t,s)$) is strictly positive on $\R^+ \times \R^+$, we only need to note that the support of the integrand is nonempty for every $(t,s)\in \R^+ \times \R^+$ in both integrals on the right hand side of (\ref{eq:ggpdensity_integral}). This comes again from (\ref{eq:decomposethetaK=2}), which shows that any pair of positive values is possible for $(\glob,\glob_1)$ (in case of the first integrand) or for $(\glob,\glob_2)$ (in case of the second integrand). (See the footnote \ref{ftn:ggpdensity_integrand_support} for explicit formulae.) The integrands are of course also non-negative, so both integrals are positive.
\end{proof}

\begin{lemma} \label{lem:weak-continuity-general}
 Let $k:\R^+\times\R^+ \to [0,\infty)$ be a function continuous in the first variable, such that for every $t\in\R^+$ the function $k(t,.)$ is a probability density on $\R^+$ -- that is, $\int_{\R^+} k(t,s) \D s = 1$. Let the operator $P$ be defined on Borel probability measures of $\R^+$ by $$(\eta P)(B):=\int_{\R^+}\int_B k(t,s) \D s \D \eta(t)$$ for every Borel probability measure $\eta$ on $\R^+$ and every Borel set $B\subset\R^+$. Then $P$ is continuous with respect to weak convergence of measures.
\end{lemma}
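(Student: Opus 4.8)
The plan is to reduce the weak continuity of $\kernel$ to the continuity of a single averaged observable, and to handle the absence of a uniform integrable bound on the densities $k(t,\cdot)$ via Scheffé's lemma.

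First I would fix a bounded continuous function $\probe$ on $\R^+$, say with $|\probe|\le\bound$, and note that by Fubini's theorem $\int \probe \D(\eta\kernel) = \int_{\R^+} \psi(t) \D\eta(t)$, where $\psi(t) := \int_{\R^+} \probe(s) k(t,s) \D s$. Since $k(t,\cdot)$ is a probability density for every $t$, we have $|\psi(t)|\le\bound$, so $\psi$ is bounded; in particular $(\eta\kernel)(\R^+)=\int_{\R^+}1 \D\eta=1$, so $\eta\kernel$ is again a Borel probability measure and the statement makes sense. It therefore suffices to show that $\psi$ is continuous on $\R^+$: once this is known, $\eta_n \Rightarrow \eta$ gives immediately $\int \probe \D(\eta_n\kernel) = \int \psi \D\eta_n \to \int\psi \D\eta = \int \probe \D(\eta\kernel)$, and since $\probe$ was an arbitrary bounded continuous function, $\eta_n\kernel \Rightarrow \eta\kernel$.

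To prove continuity of $\psi$, I would take an arbitrary sequence $t_m \to t_0$ in $\R^+$ and compare $k(t_m,\cdot)$ with $k(t_0,\cdot)$. By hypothesis $k(\cdot, s)$ is continuous for every fixed $s$, so $k(t_m, s) \to k(t_0, s)$ for every $s$; moreover $\int_{\R^+} k(t_m, s) \D s = 1 = \int_{\R^+} k(t_0, s) \D s$ for all $m$. Scheffé's lemma then yields $\int_{\R^+} |k(t_m,s) - k(t_0,s)| \D s \to 0$, and hence $|\psi(t_m) - \psi(t_0)| \le \bound \int_{\R^+} |k(t_m,s) - k(t_0,s)| \D s \to 0$. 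Since $\R^+$ is metrizable, this sequential continuity is genuine continuity of $\psi$, which finishes the proof.

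The only genuine subtlety is the appeal to Scheffé's lemma: the probability densities $k(t,\cdot)$ need not be dominated by a single integrable function as $t$ ranges over a neighbourhood of $t_0$, so the dominated convergence theorem is not directly available for passing to the limit inside $\psi$. Scheffé's lemma is precisely the tool that circumvents this, trading the missing domination for the equality of the integrals ($L^1$-normalization) together with pointwise convergence. Apart from this point, the argument is entirely routine.
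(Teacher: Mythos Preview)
Your proof is correct and follows essentially the same architecture as the paper: reduce weak continuity of $\kernel$ to continuity of the averaged observable $\psi(t)=\int\probe(s)k(t,s)\,\mathrm{d}s$, then use pointwise convergence of the densities $k(t_m,\cdot)\to k(t_0,\cdot)$ together with the fact that they all integrate to $1$. The only difference is the tool used for this last step: the paper applies Fatou's lemma to $B$ and $B^c$ to obtain setwise (hence weak) convergence of the measures $K_{t_m}$, whereas you invoke Scheff\'e's lemma to get $L^1$ convergence of the densities directly---a slightly sharper and arguably cleaner route, but the underlying idea is the same (indeed Scheff\'e is itself usually proved via Fatou).
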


This lemma is an easy consequence of the following:

\begin{lemma} \label{lem:mixed-continuity}
 Let $k:\R^+\times\R^+ \to [0,\infty)$ be a function as in Lemma~\ref{lem:weak-continuity-general}, and for every $t\in\R^+$ let $K_t$ denote the measure on $\R^+$ with density $k(t,.)$. Then if $t_n$ is a sequence in $\R^+$ converging to $t$, then $K_{t_n}$ converges to $K_t$ weakly.
\end{lemma}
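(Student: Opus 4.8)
The plan is to prove the stronger assertion that $K_{t_n}$ converges to $K_t$ in \emph{total variation} distance, from which weak convergence is immediate. The engine of the argument is Scheff\'e's lemma.

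First I would exploit the only hypothesis available, namely that $k$ is continuous in its first variable: for each fixed $s\in\R^+$ the map $t\mapsto k(t,s)$ is continuous, so $k(t_n,s)\to k(t,s)$ as $n\to\infty$. Hence the densities $k(t_n,\cdot)$ converge to $k(t,\cdot)$ pointwise on $\R^+$, in particular Lebesgue-almost everywhere. Next, each of these functions is by assumption a probability density on $\R^+$, so $\int_{\R^+}k(t_n,s)\D s=\int_{\R^+}k(t,s)\D s=1$ for every $n$. Scheff\'e's lemma — applied to the nonnegative functions $k(t_n,\cdot)$ converging a.e. to the nonnegative function $k(t,\cdot)$, with integrals trivially converging (they are constantly $1$) — then yields $L^1$ convergence of the densities, i.e. $\int_{\R^+}\abs{k(t_n,s)-k(t,s)}\D s\to 0$.

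Finally I would note that the total variation distance between $K_{t_n}$ and $K_t$ equals $\tfrac12\int_{\R^+}\abs{k(t_n,s)-k(t,s)}\D s$, which therefore tends to $0$; since convergence in total variation implies weak convergence of measures, we conclude $K_{t_n}\Rightarrow K_t$. Lemma~\ref{lem:weak-continuity-general} then follows by writing $\eta_n P=\int_{\R^+}K_t\D\eta_n(t)$ and using this weak convergence of $K_t$ together with the weak convergence $\eta_n\Rightarrow\eta$ (a routine dominated-convergence / bounded-Lipschitz argument).

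I do not expect any serious obstacle here. The two points deserving care are: (i) recognizing that ``continuity in the first variable'' is precisely what delivers the a.e.\ pointwise convergence of the densities needed to invoke Scheff\'e, and (ii) resisting the temptation to try dominated convergence directly on the integrals $\int_B k(t_n,s)\D s$, for which no obvious integrable dominating function in $s$ is at hand — Scheff\'e's lemma is exactly the tool that circumvents this.
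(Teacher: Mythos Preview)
Your argument is correct. The paper's proof uses the same two ingredients---pointwise convergence $k(t_n,\cdot)\to k(t,\cdot)$ from continuity in the first variable, and the fact that all the densities integrate to $1$---but packages them slightly differently: instead of invoking Scheff\'e's lemma, it applies Fatou's lemma directly to $\int_B k(t_n,s)\,\D s$ and to $\int_{B^c}k(t_n,s)\,\D s$, obtaining $\liminf K_{t_n}(B)\ge K_t(B)$ and $\limsup K_{t_n}(B)\le K_t(B)$ for every Borel $B$, hence setwise convergence. Your Scheff\'e route gives the nominally stronger total-variation conclusion and is a bit more streamlined; the paper's Fatou-on-$B$-and-$B^c$ argument is essentially an inline proof of (a consequence of) Scheff\'e. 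Amusingly, the very integrals $\int_B k(t_n,s)\,\D s$ you warn against attacking directly are exactly what the paper handles---just with Fatou rather than dominated convergence, which needs no majorant.
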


\begin{proof}
By assumption, $\{k(t_n,.)\}_{n=1}^\infty$ is a sequence of density functions converging pointwise to the density function $k(t,.)$. This implies weak convergence of the corresponding measures through the Fatou lemma: for any Borel set $B\subset\R^+$
$$\liminf_{n\to\infty} K_{t_n} (B) = \liminf_{n\to\infty} \int_B k(t_n,s)\D s
\stackrel{\text{Fatou}}{\ge} \int_B \liminf_{n\to\infty} k(t_n,s)\D s=\int_B k(t,s)\D s=K_t(B),$$
similarly
$$\liminf_{n\to\infty} K_{t_n} (B^c) \ge K_t(B^c),$$
which implies
$$\limsup_{n\to\infty} K_{t_n} (B) =1-\liminf_{n\to\infty} K_{t_n} (B^c) \le 1-K_t(B^c)=K_t(B).$$
These together give
$$K_{t_n}(B) \to K(B).$$
\end{proof}

\begin{proof}[Proof of Lemma \ref{lem:weak-continuity-general}]
Let $\phi:\R^+\to\R$ be bounded and continuous and let $\eta_n$ be a sequence of measures on $\R^+$ converging weakly to $\eta$. By the definition of $P$,
\begin{eqnarray*}
 \int_{\R^+} \phi \D(\eta_n P)&=&\int_{\R^+\times\R^+} k(t,s) \phi(s) \D(\eta_n(t)\times\Leb(s))=\\
 &=&\int_{\R^+} \left[ \int_{\R^+} k(t,s)\phi(s)\D s \right] \D\eta_n(t).
\end{eqnarray*}
The function
$$\bar\phi(t):=\int_{\R^+} k(t,s)\phi(s)\D s$$
is obviously bounded, and also continuous: this is exactly the statement of Lemma~\ref{lem:mixed-continuity}. But then the weak convergence of $\eta_n$ to $\eta$ means exactly that
$$\int_{\R^+} \bar\phi(t)\D\eta_n(t)\to \int_{\R^+} \bar\phi(t)\D\eta(t),$$
so we have
$$\int_{\R^+} \phi \D(\eta_n P)\to \int_{\R^+} \bar\phi(t)\D\eta(t)=\int_{\R^+} \phi \D(\eta P)$$
for every bounded continuous $\phi$, which is exactly what we want to prove.
\end{proof}

\section{Computation of the Entropy}
\label{sec:ComputationOfEntropy}

\begin{proof}[Proof of Theorem \ref{thm:explicit}]

We know that $\frac{1}{n}H_n= -\frac{1}{n}\sum_{|x|=n}\relat _x \log
\relat _x$ converges almost surely to some constant $h$, and this
constant is equal to the limit of the expected values. For this
section we use the shorthand notation already introduced in (\ref{eq:vertexweight}),
\begin{equation}\label{eq:vertexweight_again}
\vertexweight_x=e^{-\malt \birth _x}.
\end{equation}

To compute $h$, first observe that

\begin{eqnarray*}
\expect \sum_{|x|=n}\relat _x \glob \log (\relat _x \glob)=
\expect \left(\sum_{|x|=n}\glob \relat _x \log \relat _x\right)+
\expect \left((\glob \log \glob)\sum_{|x|=n} \relat _x \right)= \\
\expect \left(\glob \sum_{|x|=n}\relat _x \log \relat _x\right)+
\expect \left( \glob \log \glob \right),
\end{eqnarray*}

where we have used that $\sum_{|x|=n}\relat _x=1$ by definition.

Next we observe that on the other hand, the same expression can be
written as

\begin{eqnarray*}
\expect \sum_{|x|=n}\relat _x \glob \log (\relat _x \glob)=
\expect \sum_{|x|=n}T_x\glob _x  
\log \left(T_x\glob _x\right)= \\
\expect \left(\sum_{|x|=n}\glob _x  T_x
\log \left(T_x\right)\right)+
\expect \left(\sum_{|x|=n}T_x\glob _x  \log \glob _x\right)= \\
\sum_{|x|=n}\left(\expect \glob_x\right)\expect\left(T_x
\log T_x\right)+
\sum_{|x|=n}\expect \left(T_x\right)
\expect\left(\glob _x \log \glob _x\right)= \\
\left(\expect \glob\right)
\expect \sum_{|x|=n}\left(T_x
\log T_x\right)+
\expect\left(\glob \log \glob\right)
\expect\left(\sum_{|x|=n}T_x\right),
\end{eqnarray*}
where we have used that for any $x\in \vertices$, 
$\glob_x$ and $\birth_x$ are independent.
Recall that $\expect\left(\sum_{|x|=n}T_x\right)=1$.

Since (\ref{eq:var_glob_finite}) implies that $\expect\left(\glob \log \glob\right)<\infty$, comparing the two formulae gives the conclusion
\begin{equation}
\label{eq:show}
\expect \left(\glob \sum_{|x|=n}\relat _x \log \relat _x\right)=
\left(\expect \glob\right) \expect \left(\sum_{|x|=n}T_x
\log T_x \right).
\end{equation}

We compute the right-hand side with an induction,
\begin{eqnarray*}
A_n:=\expect \left( \sum_{|x|=n}T_x\log T_x \right)=
\expect \left( \sum_{|y|=n-1}\sum_{i=1}^{\childmax} T_{yi}\log T_{yi}\right)= \\
\left(\expect \sum_{i=1}^{\childmax} e^{-\malt(\birth_{yi}-\birth_y)} \right)
\expect\left(\sum_{|y|=n-1}T_y\log T_y\right)+ \\
\left(\expect \sum_{|y|=n-1} T_y \right)
\expect\left(\sum_{i=1}^{\childmax} e^{-\malt(\birth_{yi}-\birth_y)}
\log e^{-\malt(\birth_{yi}-\birth_y)}\right)= \\
A_{n-1}+\expect\left(\sum_{i=1}^{\childmax} T_i \log T_i \right),
\end{eqnarray*}

so 

\begin{equation*}
A_n = n \expect\left(\sum_{i=1}^{\childmax} T_i \log T_i\right).
\end{equation*}

Now write this back to (\ref{eq:show}) to get

\begin{equation*}
\expect \left(\glob \frac{1}{n} H_n \right)=
\left(\expect \glob\right) 
\expect \left(- \sum_{i=1}^{\childmax} T_i \log T_i \right).
\end{equation*}

Since $\lim \frac{1}{n}H_n=h$ almost surely and $\expect \glob < \infty$, we can apply the dominated convergence theorem if we check that $\frac{1}{n}H_n$ is bounded. This follows from the standard upper bound for entropy of measures on the finite set $\{x\in\leaves:|x|=n\}$, which has $\childmax^n$ elements, coming from the Jensen inequality:
\begin{eqnarray*}
H_n=-\sum_{|x|=n}\measure_n(\{x\}) \log \measure_n(\{x\})=\int\limits_{\{x\in\leaves:|x|=n\}} \log \frac{1}{\measure_n(\{x\})}\D \measure_n(x) \stackrel{\text{Jensen}}{\le} \\
\le \log \int\limits_{\{x\in\leaves:|x|=n\}} \frac{1}{\measure_n(\{x\})}=\log \sum_{|x|=n} \measure_n(\{x\})\frac{1}{\measure_n(\{x\})}=\log \childmax^n=n\log\childmax,
\end{eqnarray*}

so $\frac{1}{n}H_n\le\log\childmax$. Now dominated convergence gives 

\begin{equation*}
h = \expect \left( - \sum_{i=1}^{\childmax} T_i \log T_i \right).
\end{equation*}

Recalling (\ref{eq:vertexweight_again}), the proof of the theorem is complete.\end{proof}

\begin{remark}
This value can be explicitly calculated, as soon as the weight function is given, since the $\birth_i$ variables are the sum of independent, exponentially distributed random variables with parameters 
$\left(\weight (j)\right)_{j=0}^{i-1}$. Alternatively, with the function $\widehat\varrho$ defined in (\ref{eq:laprho}),
$$h=\malt \frac{\D \widehat\varrho(\lambda)}{\D\lambda} 
\Big|_{\lambda=\malt}.$$
\end{remark}

\section{Outlook}
The present result is restricted to the $\childmax<\infty$ case, i.e. when a vertex can only have finitely many children. This property is used in three places. First, Theorem~\ref{thm:entropy} relies on Lemma~\ref{lem:domkonv}, which is a very rough estimate working for finite $\childmax$ only. Second, in the proof of Theorem~\ref{thm:explicit} we use the fact that $\frac{1}{n}H_n$ is bounded -- which is also certainly false for $\childmax=\infty$. Third, showing the continuity of the density $\globdensity$ and the transition kernel function $k$ (in lemmas \ref{lem:golb_density_nice} and \ref{lem:kernel_function_continuity}) is easier using the fact that the sum in (\ref{eq:decomposetheta}) is finite. With more care, these could possibly be generalized for the $\childmax=\infty$ case, so the main result about the Hausdorff dimension, Theorem~\ref{thm:dimension} could be shown in greater generality. However, not having the explicit formula of Theorem~\ref{thm:explicit} is a serious drawback. We believe that the problem can be solved -- and the validity of the explicit formula can be shown -- for a large class of rate functions with $\childmax=\infty$ by a detailed analysis of the transition kernel $\kernel$. Such an analysis could be avoided in the present paper by the study of the limiting distribution in Section~\ref{sec:markovlimitproof}. We plan to return to that in the future.

\section*{Acknowledgements}
We gratefully thank Bal\'azs R\'ath for the simple proof of Lemma~\ref{lem:sizebiased}. We are also grateful to an anonymous referee for many useful suggestions that helped improve the quality of the paper. A. Rudas acknowledges the support of OTKA grant K60708. I. P. T\'oth acknowledges the support of OTKA grants PD73609 and K71693, and is also grateful to the European Research Council for support. 


\bibliography{tree_entropy_new}{}

\begin{thebibliography}{10}

\bibitem{barabasialbert}
Albert-L{\'a}szl{\'o} Barab{\'a}si and R{\'e}ka Albert.
\newblock Emergence of scaling in random networks.
\newblock {\em Science}, 286(5439):509--512, 1999.

\bibitem{berestycki03}
J.~Berestycki.
\newblock Multifractal spectra of fragmentation processes.
\newblock {\em Journal of Statistical Physics}, 113(3):411--430, 2003.

\bibitem{bertoin06}
J.~Bertoin.
\newblock {\em Random fragmentation and coagulation processes}.
\newblock Cambridge Univ Pr, 2006.

\bibitem{Biggins77}
J.~D. Biggins.
\newblock Martingale convergence in the branching random walk.
\newblock {\em Journal of Applied Probability}, 14(1):pp. 25--37, 1977.

\bibitem{bolriospetus}
B{\'e}la Bollob{\'a}s, Oliver Riordan, Joel Spencer, and G{\'a}bor Tusn{\'a}dy.
\newblock The degree sequence of a scale-free random graph process.
\newblock {\em Random Structures Algorithms}, 18(3):279--290, 2001.

\bibitem{Doob53}
Joseph~Leo Doob.
\newblock {\em Stochastic Processes}.
\newblock Wiley, 1953.

\bibitem{duquesne10}
T.~Duquesne.
\newblock Packing and {H}ausdorff measures of stable trees.
\newblock {\em L{\'e}vy Matters I}, pages 93--136, 2010.

\bibitem{DuquesneLeGall05}
Thomas Duquesne and Jean-François~Le Gall.
\newblock Probabilistic and fractal aspects of {L}{\'e}vy trees.
\newblock {\em Probability Theory and Related Fields}, 131:553--603, 2005.

\bibitem{Falconer97}
Kenneth Falconer.
\newblock {\em Techniques in Fractal Geometry}.
\newblock Wiley, 1997.

\bibitem{HaasMiermont04}
B.~Haas and G.~Miermont.
\newblock The genealogy of self-similar fragmentations with negative index as a
  continuum random tree.
\newblock {\em Electronic Journal of Probability}, 9(paper 4):57, 2004.

\bibitem{HaasMiermont10}
B.~Haas and G.~Miermont.
\newblock Scaling limits of {M}arkov branching trees, with applications to
  {G}alton-{W}atson and random unordered trees.
\newblock {\em Arxiv preprint arXiv:1003.3632}, 2010.

\bibitem{HaasMiermontPitmanWinkel08}
B.~Haas, G.~Miermont, J.~Pitman, and M.~Winkel.
\newblock Continuum tree asymptotics of discrete fragmentations and
  applications to phylogenetic models.
\newblock {\em The Annals of Probability}, 36(5):1790--1837, 2008.

\bibitem{jagers}
Peter Jagers.
\newblock {\em Branching processes with biological applications}.
\newblock Wiley-Interscience [John Wiley \& Sons], London, 1975.
\newblock Wiley Series in Probability and Mathematical Statistics ---Applied
  Probability and Statistics.

\bibitem{krapivskyPRE}
P.~L. Krapivsky and S.~Redner.
\newblock Organization of growing random networks.
\newblock {\em Phys. Rev. E}, 63(6):066123, May 2001.

\bibitem{krapivskyPRL}
P.~L. Krapivsky, S.~Redner, and F.~Leyvraz.
\newblock Connectivity of growing random networks.
\newblock {\em Phys. Rev. Lett.}, 85(21):4629--4632, Nov 2000.

\bibitem{Lyons97}
R.~Lyons.
\newblock A simple path to {B}iggins' martingale convergence for branching
  random walk.
\newblock In K.B. Athreya and P.~Jagers, editors, {\em Classical and modern
  branching processes}, The IMA volumes in mathematics and its applications.
  Springer, 1997.

\bibitem{LyonsPemantlePeres95}
R.~Lyons, R.~Pemantle, and Y.~Peres.
\newblock Conceptual proofs of l log l criteria for mean behavior of branching
  processes.
\newblock {\em The Annals of Probability}, 23(3):1125--1138, 1995.

\bibitem{mori1}
T.~F. M{\'o}ri.
\newblock On random trees.
\newblock {\em Studia Sci. Math. Hungar.}, 39(1-2):143--155, 2002.

\bibitem{oliveira}
Roberto Oliveira and Joel Spencer.
\newblock Connectivity transitions in networks with super-linear preferential
  attachment.
\newblock {\em Internet Math.}, 2(2):121--163, 2005.

\bibitem{RudasToth2008}
Anna Rudas and B\'{a}lint T\'{o}th.
\newblock Random tree growth with branching processes - a survey.
\newblock In B~Bollob\'{a}s, R~Kozma, and D~Mikl\'{o}s, editors, {\em Handbook
  of Large-Scale Random Networks}, volume~18 of {\em Bolyai Society
  Mathematical Studies}, chapter~4. Springer, 2007.

\bibitem{rtv}
Anna Rudas, B\'{a}lint T\'{o}th, and Benedek Valk\'{o}.
\newblock Random trees and general branching processes.
\newblock {\em Random Struct. Algorithms}, 31(2):186--202, 2007.

\end{thebibliography}
\bibliographystyle{plain}

\end{document}